\newtheorem{thm}{Theorem}[section]
\newtheorem{prop}[thm]{Proposition}
\newtheorem{lem}[thm]{Lemma}
\newtheorem{cor}[thm]{Corollary}
\theoremstyle{definition}
\newtheorem{defn}[thm]{Definition}
\newtheorem{remk}[thm]{Remark}
\newtheorem{remks}[thm]{Remarks}
\newtheorem{exm}[thm]{Example}
\newtheorem{exms}[thm]{Examples}
\newtheorem{notat}[thm]{Notation}
\numberwithin{equation}{section}
\newcommand{\sC}{{\mathcal C}}
\newcommand{\sO}{{\mathcal O}}
\newcommand{\sR}{{\mathcal R}}
\newcommand{\sV}{{\mathcal V}}
\newcommand{\sZ}{{\mathcal Z}}
\newcommand{\A}{{\mathbb A}}
\newcommand{\G}{{\mathbb G}}
\newcommand{\bL}{{\mathbb L}}
\newcommand{\N}{{\mathbb N}}
\renewcommand{\P}{{\mathbb P}}
\newcommand{\Q}{{\mathbb Q}}
\newcommand{\Z}{{\mathbb Z}}
\newcommand{\surj}{\twoheadrightarrow}
\newcommand{\inj}{\hookrightarrow}
\newcommand{\Pic}{{\rm Pic}}
\newcommand{\Hom}{{\rm Hom}}
\newcommand{\0}{\emptyset}
\renewcommand{\max}{{\operatorname{\rm max}}}
\newcommand{\ds}{{/\kern-3pt/}}
\newcommand{\ov}{\overline}
\newcommand{\wt}{\widetilde}
\newcommand{\tuborg}{\left\{\begin{array}{ll}}
\newcommand{\sluttuborg}{\end{array}\right.}
\begin{document}
\title{Equivariant cobordism for torus actions}
\author{Amalendu Krishna}
\address{School of Mathematics, Tata Institute of Fundamental Research,  
Homi Bhabha Road, Colaba, Mumbai, India} 
\email{amal@math.tifr.res.in}

\baselineskip=10pt 
  
\keywords{Algebraic cobordism, group actions}        

\subjclass[2010]{Primary 14C25; Secondary 19E15}
\begin{abstract}
We study the equivariant cobordism groups for the action of a split
torus $T$ on varieties over a field $k$ of characteristic zero. 
We show that for $T$ acting on a variety $X$, there is an isomorphism 
$\Omega^T_*(X) \otimes_{\Omega^*(BT)} \bL \xrightarrow{\cong} \Omega_*(X)$. 
As applications, we show that for a connected linear algebraic group
$G$ acting on a $k$-variety $X$, the forgetful map $\Omega^G_*(X) \to
\Omega_*(X)$ is surjective with rational coefficients.
As a consequence, we describe the rational algebraic cobordism ring of 
algebraic groups and flag varieties.

We prove a structure theorem for the equivariant cobordism of smooth
projective varieties with torus action. Using this, we prove various 
localization theorems and a form of Bott residue formula for such
varieties. As an application, we show that the equivariant cobordism
of a smooth variety $X$ with torus action is generated by the invariant
cobordism cycles in $\Omega_*(X)$ as $\Omega^*(BT)$-module. 
\end{abstract}
\maketitle

\section{Introduction}
Let $k$ be a field of characteristic zero and let $G$ be a linear algebraic
group over $k$. The equivariant algebraic cobordism groups for smooth
varieties were defined by Deshpande in \cite{DD}. They were subsequently
developed into a complete theory of equivariant cobordism for all 
$k$-schemes in \cite{Krishna4}. This theory is based on the analogous 
construction of the equivariant Chow groups by Totaro \cite{Totaro1}
and Edidin-Graham \cite{EG}.
In \cite{Krishna4}, we established all the basic properties
of the equivariant cobordism which are known for the non-equivariant
cobordism theory of Levine and Morel \cite{LM}. 
In this paper, we continue the study of these cobordism groups with 
special focus on the case when the underlying group is a torus.

It was shown in \cite[Theorem~8.7]{Krishna4} that with rational coefficients,
the equivariant cobordism of a $k$-variety $X$ with the action of a connected 
linear algebraic group $G$ is simply the subgroup of invariants inside the 
equivariant cobordism for the action of a maximal torus of $G$ under the 
action of the Weyl group. This reduces most of the computations of
equivariant cobordism to the case when the underlying group is a torus.
Our aim in this paper is to study this special case in more detail and
derive some important consequences for the action of arbitrary connected
groups. These results are applied in \cite{Krishna3} to describe
the non-equivariant cobordism rings of principal and flag bundles.
The results of this paper are also used in \cite{KU} to compute the
algebraic cobordism of toric varieties.  
Some more applications will appear in \cite{KK}. We now describe some
of the main results of this paper.

In this paper, a scheme will mean a quasi-projective $k$-scheme and all 
$G$-actions will be assumed to be linear. 
Let $T$ be a split torus and let $S(T)$ denote the cobordism ring 
$\Omega^*_T(k)$ of the classifying space of $T$. If $X$ is a scheme with
$T$-action, we show in Theorem~\ref{thm:FF*} that the forgetful map
from the equivariant cobordism to the ordinary cobordism group of $X$ 
induces an isomorphism
\[
r^T_X : \Omega^T_*(X) \otimes_{S(T)} \bL \xrightarrow{\cong} \Omega_*(X),
\]
which is a ring isomorphism if $X$ is smooth.
This result for the equivariant Chow groups was earlier proven by Brion
in \cite[Corollary~2.3]{Brion2}.

Using Theorem~\ref{thm:FF*} and the results of Calmes-Petrov-Zainoulline
\cite{CPZ}, we give a explicit geometric description of the algebraic
cobordism ring of flag varieties with rational coefficients.
In particular, we show in Theorem~\ref{thm:flag-V} that if $G$ is a connected 
reductive group and $B$ is a Borel subgroup containing a split maximal torus 
$T$, then with rational coefficients, there is an $\bL$-algebra isomorphism
\begin{equation}\label{eqn:flag-V1}
S(T) {\otimes}_{S(G)} \bL \xrightarrow{\cong} \Omega^*(G/B),
\end{equation}
where $S(G) = \Omega^*(BG)$.
This can be interpreted as the uncompleted version of the results of 
\cite{CPZ}.
In case of $G = GL_n$, this yields an explicit formula for the ring
$\Omega^*(G/B)$ as the quotient of the standard polynomial ring 
$\bL[x_1, \cdots , x_n]$ by the ideal generated by the homogeneous symmetric 
polynomials of strictly positive degree. This latter result for $GL_n$ 
recovers the theorem of Hornbostel and Kiritchenko \cite{HK} by a simpler 
method. We remark that the result of Hornbostel and Kiritchenko is
stronger in the sense that they prove it with the integral coefficients.
We also obtain similar description for $\Omega^*(G)$ that generalizes the 
results of Yagita \cite{Yagita}.

As an application of Theorems~\ref{thm:FF*} and ~\ref{thm:flag-V}, we show 
that if $G$ is a connected linear algebraic group acting on a scheme $X$, then 
the forgetful map
\[
r^G_X : \Omega^G_*(X) \to \Omega_*(X)
\]
is surjective with rational coefficients.
This generalizes the analogous results for the $K$-groups and Chow groups
by Graham \cite{Graham} and Brion \cite{Brion2} to algebraic cobordism.

In our next result, we give a structure theorem 
({\sl cf.} Theorem~\ref{thm:Main-Str}) for the equivariant
cobordism of smooth projective varieties with torus action. The main point
of this theorem is that for such a variety $X$, the equivariant
cobordism of $X$ is very closely related to the non-equivariant cobordism
of the fixed point loci in $X$. The main ingredients in the proof are the
self-intersection formula for the equivariant cobordism in
Proposition~\ref{prop:SIF} and a decomposition theorem of Bialynicki-Birula
for such varieties. 

As an application of Theorem~\ref{thm:Main-Str}, we show that if a split
torus $T$ acts on a smooth variety $X$, then the equivariant cobordism
group $\Omega^T_*(X)$ is generated by the $T$-invariant cobordism cycles
in $\Omega_*(X)$ as $S(T)$-module. For equivariant Chow groups, this
was earlier proven by Brion in \cite[Theorem~2.1]{Brion2}.
The result of Brion can also be deduced from corresponding result for
cobordism and \cite[Proposition~7.1]{Krishna4}.

In \cite{Brion2}, Brion proves the localization formulae for the
equivariant Chow groups for torus action on smooth projective varieties. 
These formulae describe the equivariant Chow groups of a smooth projective
variety $X$ with $T$-action in terms of the equivariant Chow groups of the
fixed point locus $X^T$. Since $\Omega^T_*(X^T)$ is relatively simpler to
compute, these formulae yield a way of computing the equivariant Chow
groups of $X$. In the final set of results in this paper, we prove 
these localization formulae for the equivariant cobordism. Our results
generalize all the analogous results of Brion to the case of
cobordism. These results are expected to be the foundational step in the
computation of the cobordism groups of spherical varieties.
Using the above result, we aim to compute the cobordism ring of certain 
spherical varieties in \cite{KK}. 

\section{Recollection of equivariant cobordism}\label{section:AC}
Since we shall be concerned with the study of 
schemes with group actions and the associated quotient schemes, and since 
such quotients often require the original scheme to be quasi-projective,
we shall assume throughout this paper that all schemes over $k$ are 
quasi-projective. 

In this section, we briefly recall the definition of equivariant
algebraic cobordism and some of its main properties from \cite{Krishna4}.
Since most of the results of \cite{Krishna4} will be repeatedly used in this 
text, we summarize them here for reader's convenience. 
For the definition and all details about the algebraic
cobordism used in this paper, we refer the reader the work of Levine and
Morel \cite{LM}. 
\\
\\
\noindent
{\bf Notations.} We shall denote the category of quasi-projective $k$-schemes 
by $\sV_k$. By a scheme, we shall mean an object 
of $\sV_k$. The category of smooth quasi-projective schemes
will be denoted by $\sV^S_k$. If $G$ is a linear algebraic group over $k$, we 
shall denote the category of quasi-projective $k$-schemes with a $G$-action
and $G$-equivariant maps by $\sV_G$. The associated category of smooth
$G$-schemes will be denoted by $\sV_G^S$. All $G$-actions in this paper will be
assumed to be linear. Recall that this means that all $G$-schemes are
assumed to admit $G$-equivariant ample line bundles. This assumption is
always satisfied for normal schemes ({\sl cf.} \cite[Theorem~2.5]{Sumihiro}, 
\cite[5.7]{Thomason1}).

Recall that the Lazard ring $\bL$ is a polynomial ring over $\Z$ on infinite 
but countably many variables and is given by the quotient of the polynomial 
ring $\Z[A_{ij}| (i,j) \in \N^2]$ by the relations, which uniquely define the 
universal formal group law $F_{\bL}$ of rank one on $\bL$. 
Recall that a cobordism cycle over a $k$-scheme $X$ is
a family $\alpha = [Y \xrightarrow{f} X, L_1, \cdots , L_r]$, where $Y$ is a
smooth scheme, the map $f$ is projective, and $L_i$'s are line bundles on $Y$.
Here, one allows the set of line bundles to be empty.
The degree of such a cobordism cycle is  defined to be ${\rm deg}(\alpha) =
{\rm dim}_k(Y)-r$ and its codimension is defined to be ${\rm dim}(X) -
{\rm deg}(\alpha)$. If $\sZ_*(X)$ is the free abelian group generated by the 
cobordism cycles of the above type with $Y$ irreducible, then
$\sZ_*(X)$ is graded by the degree of cycles.
The algebraic cobordism group of $X$ is defined as
\[
\Omega_*(X) = \frac{\sZ_*(X)}{\sR_*(X)},
\]
where ${\sR_*(X)}$ is the graded subgroup generated by relations which are
determined by the dimension and the section axioms and the above formal group 
law. If $X$ is equi-dimensional, we set $\Omega^i(X) = \Omega_{{\rm dim}(X)-i}(X)$
and grade $\Omega^*(X)$ by codimension of the cobordism cycles. 
It was shown by Levine and Pandharipande \cite{LP} that the cobordism group
$\Omega_*(X)$ can also be defined as the quotient
\[
\Omega_*(X) = \frac{\sZ'_*(X)}{\sR'_*(X)},
\]
where $\sZ'_*(X)$ is the free abelian group on cobordism cycles 
$[Y \xrightarrow{f} X]$ with $Y$ smooth and irreducible and $f$ projective.
The graded subgroup $\sR'_*(X)$ is generated by cycles satisfying the relation
of {\sl double point degeneration}.

Let $X$ be a $k$-scheme of dimension $d$. For $j \in \Z$, let $Z_j$ be the
set of all closed subschemes $Z \subset X$ such that ${\rm dim}_k(Z) \le j$
(we assume ${\rm dim}(\0) = - \infty$). The set $Z_j$ is then ordered by 
the inclusion. For $i \ge 0$,  we set
\[
\Omega_i(Z_j) = {\underset{Z \in Z_j} \varinjlim} \Omega_i(Z) 
\ \ {\rm and} \ \
\Omega_*(Z_j) = {\underset{i \ge 0} \bigoplus} \ \Omega_i(Z_j).
\]
It is immediate that $\Omega_*(Z_j)$ is a graded $\bL_*$-module and there is
a graded $\bL_*$-linear map $\Omega_*(Z_j) \to \Omega_*(X)$.
We define $F_j\Omega_*(X)$ to be the image of the natural $\bL_*$-linear map
$\Omega_*(Z_j) \to \Omega_*(X)$. In other words, $F_j\Omega_*(X)$ is the image
of all $\Omega_*(W) \to \Omega_*(X)$, where $W \to X$ is a projective map
such that ${\rm dim}({\rm Image}(W)) \le j$.
One checks at once that there is a canonical {\sl niveau filtration}
\begin{equation}\label{eqn:niveau1}
0 = F_{-1}\Omega_*(X) \subseteq F_0\Omega_*(X) \subseteq \cdots \subseteq
F_{d-1}\Omega_*(X) \subseteq F_d\Omega_*(X) = \Omega_*(X).
\end{equation}

\subsection{Equivariant cobordism}
In this text, $G$ will denote a linear algebraic group of dimension $g$ 
over $k$. All representations of $G$ will be finite dimensional. 
Recall form \cite{Krishna4} that for any integer $j \ge 0$, a {\sl good pair}  
$\left(V_j,U_j\right)$ corresponding to $j$ for the $G$-action is a pair 
consisting of a $G$-representation $V_j$ and an open subset $U_j \subset V_j$ 
such that the codimension of the complement is at least $j$ and $G$ acts 
freely on $U_j$ with quotient ${U_j}/G$ a quasi-projective scheme.
It is known that such good pairs always exist. 
  
Let $X$ be a $k$-scheme of dimension $d$ with a $G$-action.
For $j \ge 0$, let $(V_j, U_j)$ be an $l$-dimensional good pair 
corresponding to $j$. For $i \in \Z$, if we set
\begin{equation}\label{eqn:E-cob*}
{\Omega^G_i(X)}_j =  \frac{\Omega_{i+l-g}\left({X\stackrel{G} {\times} U_j}\right)}
{F_{d+l-g-j}\Omega_{i+l-g}\left({X\stackrel{G} {\times} U_j}\right)},
\end{equation}
then it is known that ${\Omega^G_i(X)}_j$ is independent of the 
choice of the good pair $(V_j, U_j)$. Moreover, there is a natural surjective 
map $\Omega^G_i(X)_{j'} \surj \Omega^G_i(X)_j$ for $j' \ge j \ge 0$.

\begin{defn}\label{defn:ECob}
Let $X$ be a $k$-scheme of dimension $d$ with a $G$-action. For any 
$i \in \Z$, we define the {\sl equivariant algebraic cobordism} of $X$ to be 
\[
\Omega^G_i(X) = {\underset {j} \varprojlim} \ \Omega^G_i(X)_j.
\]
\end{defn}
The reader should note from the above definition that unlike the ordinary
cobordism, the equivariant algebraic cobordism $\Omega^G_i(X)$ can be 
non-zero for any $i \in \Z$. We set 
\[
\Omega^G_*(X) = {\underset{i \in \Z} \bigoplus} \ \Omega^G_i(X).
\]
If $X$ is an equi-dimensional $k$-scheme with $G$-action, we let
$\Omega^i_G(X) = \Omega^G_{d-i}(X)$ and $\Omega^*_G(X) =
{\underset{i \in \Z} \oplus} \ \Omega^i_G(X)$. 
It is known that if $G$ is trivial, then the $G$-equivariant cobordism
reduces to the ordinary one.
\begin{remk}\label{remk:Csmooth}
It is easy to check from the above definition of the niveau filtration 
that if $X$ is a smooth and irreducible $k$-scheme of dimension $d$,
then $F_j\Omega_i(X) = F^{d-j}\Omega^{d-i}(X)$, where $F^{\bullet}\Omega^*(X)$ 
is the coniveau filtration used in \cite{DD}. Furthermore, one also 
checks in this case that if $G$ acts on $X$, then
\begin{equation}\label{eqn:Csmooth1}
\Omega^i_G(X) = {\underset {j} \varprojlim} \
\frac{\Omega^{i}\left({X\stackrel{G} {\times} U_j}\right)}
{F^{j}\Omega^{i}\left({X\stackrel{G} {\times} U_j}\right)},
\end{equation}
where $(V_j, U_j)$ is a good pair corresponding to any $j \ge 0$.
Thus the above definition of the equivariant cobordism
coincides with that of \cite{DD} for smooth schemes.
\end{remk}
For equi-dimensional schemes, we shall write the (equivariant) cobordism groups
cohomologically.
The $G$-equivariant cobordism group $\Omega^*(k)$ of the ground field $k$ 
is denoted by $\Omega^*(BG)$ and is called the cobordism of the 
{\sl classifying space} of $G$. We shall often write it as $S(G)$.

The following important result shows that if 
we suitably choose a sequence of good pairs ${\{(V_j, U_j)\}}_{j \ge 0}$, then 
the above equivariant cobordism group can be computed without taking 
quotients by the niveau filtration. This is often very helpful in
computing the equivariant cobordism groups. 
  
\begin{thm}\label{thm:NO-Niveu}$(${\sl cf.} \cite[Theorem~6.1]{Krishna4}$)$
Let ${\{(V_j, U_j)\}}_{j \ge 0}$ be a sequence of $l_j$-dimensional 
good pairs such that \\
$(i)$ $V_{j+1} = V_j \oplus W_j$ as representations of $G$ with ${\rm dim}(W_j) 
> 0$ and \\
$(ii)$ $U_j \oplus W_j \subset U_{j+1}$ as $G$-invariant open subsets. \\ 
Then for any scheme $X$ as above and any $i \in \Z$,
\[
\Omega^G_i(X) \xrightarrow{\cong} {\underset{j}\varprojlim} \
\Omega_{i+l_j-g}\left(X \stackrel{G}{\times} U_j\right).
\]
Moreover, such a sequence ${\{(V_j, U_j)\}}_{j \ge 0}$ of good pairs always 
exists.
\end{thm}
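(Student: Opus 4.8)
The plan is to realise both sides as inverse limits over the same index $j$ and to show that the two inverse systems involved differ only by a \emph{pro-zero} system; the isomorphism then falls out of the $\varprojlim$/$\varprojlim^{1}$ exact sequence. First I would settle the last assertion. Fix a faithful representation $W$ of $G$ (i.e. $G\hookrightarrow\GL(W)$), and for a strictly increasing sequence of integers $n_j$ put $V_j:=W^{\oplus n_j}=\Hom(k^{n_j},W)$ and let $U_j\subset V_j$ be the open locus of surjective homomorphisms $k^{n_j}\twoheadrightarrow W$. Then $\GL(W)$, and a fortiori $G$, acts freely on $U_j$; the quotient $U_j/G$ is quasi-projective, since it fibres over the Grassmannian $U_j/\GL(W)$ with fibre $\GL(W)/G$ (cf. \cite{EG, Krishna4}); and $V_j\setminus U_j$ is a determinantal locus of codimension $n_j-\dim W+1$, which is $\ge j$ once $n_j$ is large. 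With $W_j:=W^{\oplus(n_{j+1}-n_j)}$ we get $V_{j+1}=V_j\oplus W_j$, $\dim W_j>0$, and $U_j\oplus W_j\subseteq U_{j+1}$, because prepending a surjective tuple to any tuple gives a surjective tuple. This is the desired sequence.

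\emph{The two inverse systems.} Now fix $X$ (of dimension $d$) and $i$, write $Y_j:=X\stackrel{G}{\times}U_j$ (so $\dim Y_j=d+l_j-g$) and $w_j:=\dim W_j$. The scheme $E_j:=X\stackrel{G}{\times}(U_j\oplus W_j)$ is a vector bundle $\pi_j\colon E_j\to Y_j$ of rank $w_j$, so by homotopy invariance $\pi_j^{*}\colon\Omega_{*}(Y_j)\xrightarrow{\ \cong\ }\Omega_{*+w_j}(E_j)$; and $E_j$ is an open subscheme $\iota_j\colon E_j\hookrightarrow Y_{j+1}$ whose complement has codimension $\ge j$, so by the localization sequence $\iota_j^{*}$ is surjective with kernel inside $F_{d+l_{j+1}-g-j}\Omega_{*}(Y_{j+1})$. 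Put $\phi_j:=(\pi_j^{*})^{-1}\circ\iota_j^{*}\colon\Omega_{i+l_{j+1}-g}(Y_{j+1})\to\Omega_{i+l_j-g}(Y_j)$; equivalently, $\phi_j$ is the refined pull-back along the regular closed immersion $Y_j\hookrightarrow Y_{j+1}$, $u\mapsto(u,0)$, of codimension $w_j$. By construction these maps induce the standard surjections $\Omega^G_i(X)_{j+1}\surj\Omega^G_i(X)_j$ of \cite{Krishna4} on the quotients $\Omega^G_i(X)_j=\Omega_{i+l_j-g}(Y_j)/K_j$, with $K_j:=F_{d+l_j-g-j}\Omega_{i+l_j-g}(Y_j)$. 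Hence we have a short exact sequence of inverse systems
\[
0\longrightarrow\{K_j\}_{j}\longrightarrow\{\Omega_{i+l_j-g}(Y_j)\}_{j}\longrightarrow\{\Omega^G_i(X)_j\}_{j}\longrightarrow 0 .
\]

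\emph{Pro-triviality of $\{K_j\}$.} The key point is that $\phi_j$ shifts the niveau filtration by exactly $-w_j$, i.e. it preserves the codimension of support: $\phi_j\!\left(F_{\dim Y_{j+1}-c}\,\Omega_{*}(Y_{j+1})\right)\subseteq F_{\dim Y_j-c}\,\Omega_{*}(Y_j)$ for every $c$. For $\iota_j^{*}$ this is clear, since restriction to an open subset keeps a cycle in codimension $\ge c$. For $(\pi_j^{*})^{-1}$ it is the compatibility of homotopy invariance with the niveau filtration; I would prove it (or cite it from \cite{LM, Krishna4}) by expressing $(\pi_j^{*})^{-1}$ as pull-back along the zero section and invoking the excess-intersection formula, observing that the Chern-class factor produced on each component of the intersection has exactly the codimension needed to compensate for the drop in the dimension of that component. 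Granting this, iteration shows that for $j'\ge j$ the composite transition map $K_{j'}\to K_j$ lands in $F_{d+l_j-g-j'}\,\Omega_{i+l_j-g}(Y_j)$, and this group vanishes once $j'>d+l_j-g$, because $F_p\Omega_{*}=0$ for $p<0$. Thus for every $j$ the transition map $K_{j'}\to K_j$ is zero for $j'\gg j$, so $\{K_j\}$ is pro-zero; in particular $\varprojlim_j K_j=0$ and $\varprojlim^{1}_j K_j=0$.

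\emph{Conclusion, and the main obstacle.} Applying $\varprojlim_j$ to the short exact sequence of systems and using $\varprojlim_j K_j=\varprojlim^{1}_j K_j=0$, the middle term maps isomorphically onto $\varprojlim_j\Omega^G_i(X)_j=\Omega^G_i(X)$; the isomorphism in the statement is the inverse of this map, and the argument also reproves that $\Omega^G_i(X)$ is independent of the chosen nested sequence. The part I expect to require real care is the compatibility of homotopy invariance with the niveau filtration in the pro-triviality step; everything else is the standard existence of good pairs together with routine inverse-limit bookkeeping.
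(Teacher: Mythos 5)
Your overall plan---realizing both sides as inverse limits over the same index and arguing that the kernel system $\{K_j\}$ is pro-zero---is a sensible strategy, and the construction of the nested good pairs via a faithful representation is standard and correct. The genuine gap, however, is precisely at the step you yourself flag as requiring real care.

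You need $\phi_j=(\pi_j^*)^{-1}\circ\iota_j^*$ to preserve codimension of support, i.e.\ to send $F_{\dim Y_{j+1}-c}\,\Omega_*(Y_{j+1})$ into $F_{\dim Y_j-c}\,\Omega_*(Y_j)$, so that iterating the transitions forces the images of $K_{j'}$ into negative filtration levels. For the open restriction $\iota_j^*$ this is clear. For $(\pi_j^*)^{-1}=s^!$, the refined Gysin along the zero section $s\colon Y_j\hookrightarrow E_j$, it is not: if $\alpha\in F_q\Omega_*(E_j)$, witnessed by a closed $Z\subseteq E_j$ of dimension $\le q$ with $\alpha$ in the image of $\Omega_*(Z)\to\Omega_*(E_j)$, then $s^!\alpha=i'_*(s'^!\alpha')$ is supported only on $s^{-1}(Z)=Y_j\cap Z$, whose dimension can be as large as $q$ rather than $q-w_j$ when $Z$ meets the zero section excessively. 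The excess Chern class you invoke does yield a class of the correct \emph{degree}, but that class lives on $Y_j\cap Z$ and hence is only \emph{a priori} in $F_{\dim(Y_j\cap Z)}$, which may equal $F_q$; it is not shown to land in $F_{q-w_j}$. Lemma~\ref{lem:Niv-PB} of this paper gives exactly the codimension-preservation you want, but its proof rests on the moving lemma of \cite{LM1} and requires both source and target to be smooth, so it does not apply when $X$ (hence $Y_j$) is an arbitrary quasi-projective scheme. Without this, the iterated transition $K_{j'}\to K_j$ is not shown to vanish for $j'\gg j$, and the pro-triviality step is unjustified. Closing the gap requires an independent argument that $(\pi^*)^{-1}$ preserves codimension for a vector bundle over a possibly singular base---for instance by Noetherian induction on $Y_j$, choosing over a dense open a section of $E_j$ that avoids $Z$, and carefully controlling the contribution from the complement---or a citation of a result establishing this compatibility in the singular case.
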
  

\subsection{Change of groups}
If $H \subset G$ is a closed subgroup of dimension $h$, then any 
$l_j$-dimensional good pair 
$(V_j, U_j)$ for $G$-action is also a good pair for the induced $H$-action. 
Moreover, for any $X \in \sV_G$ of dimension $d$, 
$X \stackrel{H}{\times} U_j \to X \stackrel{G}{\times} U_j$
is an \'etale locally trivial $G/H$-fibration and hence a smooth map 
({\sl cf.} \cite[Theorem~6.8]{Borel}) of relative dimension $g-h$. 
Taking the inverse limit of corresponding pull-back maps on the
cobordism groups, this induces the restriction map
\begin{equation}\label{eqn:res}
r^G_{H,X} : \Omega^G_*(X) \to \Omega^H_*(X).
\end{equation}
Taking $H = \{1\}$, we get the {\sl forgetful} map 
\begin{equation}\label{eqn:res}
r^G_X : \Omega^G_*(X) \to \Omega_*(X)
\end{equation}
from the equivariant to the non-equivariant cobordism. 
Since $r^G_{H,X}$ is obtained as a pull-back under the smooth map, it commutes
with any projective push-forward and smooth pull-back ({\sl cf.} 
Theorem~\ref{thm:Basic}).

The equivariant cobordism for the action of a group $G$ is related with
the equivariant cobordism for the action of the various subgroups of $G$
by the following.
\begin{prop}[Morita Isomorphism]\label{prop:Morita}
Let $H \subset G$ be a closed subgroup and let $X \in {\sV}_H$.
Then there is a canonical isomorphism
\begin{equation}\label{eqn:MoritaI}
\Omega^G_*\left(G \stackrel{H}{\times} X\right) \xrightarrow{\cong}
\Omega^H_*(X).
\end{equation}
\end{prop}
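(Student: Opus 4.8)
The plan is to construct the isomorphism level-by-level in the inverse system defining equivariant cobordism, and then pass to the limit. Fix a dimension index $j\ge 0$ and choose a sequence of good pairs $\{(V_j,U_j)\}$ for the $G$-action as in Theorem~\ref{thm:NO-Niveu}; since $H\subset G$, each $(V_j,U_j)$ is simultaneously a good pair for the $H$-action (the $H$-action on $U_j$ is still free, and $U_j/H$ is quasi-projective as it maps to $U_j/G$ with fibres $G/H$). The key geometric input is the elementary identity of mixed-quotient spaces
\[
\bigl(G\stackrel{H}{\times}X\bigr)\stackrel{G}{\times}U_j \;\cong\; X\stackrel{H}{\times}U_j,
\]
which holds because both sides represent the same quotient of $G\times X\times U_j$: on the left we first form $G\times^H X$ and then quotient the product with $U_j$ by the diagonal $G$-action, while on the right we quotient $X\times U_j$ by the diagonal $H$-action. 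One checks that the map sending the class of $(g,x,u)$ on the left to the class of $(x,g^{-1}u)$ on the right is a well-defined isomorphism of schemes. Note that if $g=\dim G$ and $h=\dim H$, then $\dim\bigl(G\stackrel{H}{\times}X\bigr)=d+g-h$, so the relevant shifts in \eqref{eqn:E-cob*} match up: on the $G$-side one uses $\Omega_{i+l_j-g}$ of a space built from something of dimension $d+g-h$, and on the $H$-side $\Omega_{i+l_j-h}$ of a space built from $X$ of dimension $d$; but $(d+g-h)+l_j-g = d+l_j-h$, so the niveau-truncation degrees agree as well.

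With the scheme-level identification in hand, the remaining steps are routine. First I would verify that the isomorphism above is compatible with the transition maps in the two inverse systems: given good pairs $(V_j,U_j)$ and $(V_{j+1},U_{j+1})$ satisfying conditions $(i)$ and $(ii)$ of Theorem~\ref{thm:NO-Niveu} for $G$, they also satisfy them for $H$, and the scheme isomorphisms for consecutive $j$ fit into a commutative square with the natural open-immersion-induced maps on both sides. This gives an isomorphism of pro-objects, hence of inverse limits. Invoking Theorem~\ref{thm:NO-Niveu} on both sides — which lets us compute each equivariant cobordism group as a genuine inverse limit of ordinary cobordism groups without the niveau quotients — we obtain
\[
\Omega^G_*\bigl(G\stackrel{H}{\times}X\bigr)\;\cong\;\varprojlim_j \Omega_{*+l_j-g}\Bigl(\bigl(G\stackrel{H}{\times}X\bigr)\stackrel{G}{\times}U_j\Bigr)\;\cong\;\varprojlim_j \Omega_{*+l_j-h}\bigl(X\stackrel{H}{\times}U_j\bigr)\;\cong\;\Omega^H_*(X),
\]
and the composite is the desired canonical isomorphism \eqref{eqn:MoritaI}.

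The main obstacle, such as it is, is purely bookkeeping: one must be careful that the quasi-projectivity hypotheses underlying the whole theory are preserved. Specifically, $G\stackrel{H}{\times}X$ must be a quasi-projective $k$-scheme with linear $G$-action, which requires $X\in\sV_H$ to carry an $H$-equivariant ample line bundle so that the induced bundle on the mixed space is $G$-equivariant and ample; this is exactly the standing linearity assumption recalled in the Notations, so it is automatic. One should also confirm that the degree shifts are handled consistently when $G$ and $H$ have different dimensions — done above — and that the isomorphism is natural in $X$, which follows since every construction used is functorial in the $H$-scheme $X$. No deep input beyond Theorem~\ref{thm:NO-Niveu} and the elementary quotient identity is needed.
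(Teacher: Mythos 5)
Your proof is correct, and it is the standard argument; the paper itself gives no inline proof of Proposition~\ref{prop:Morita} (it is quoted from \cite{Krishna4}), but the mixed-quotient identity $\bigl(G\stackrel{H}{\times}X\bigr)\stackrel{G}{\times}U_j \cong X\stackrel{H}{\times}U_j$ together with Theorem~\ref{thm:NO-Niveu} is exactly the expected route.

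One small point worth flagging: your final chain of isomorphisms implicitly reads as if the grading index $*$ is preserved, but the two middle terms are $\Omega_{*+l_j-g}$ and $\Omega_{*+l_j-h}$ of the \emph{same} scheme, so the identification at a fixed internal degree $p$ sends $\Omega^G_i\bigl(G\stackrel{H}{\times}X\bigr)$ (where $p=i+l_j-g$) to $\Omega^H_{i-(g-h)}(X)$ (where $p=i'+l_j-h$). In other words the Morita isomorphism shifts the homological grading by $g-h$, which is exactly what it must do so that the fundamental class of $G\stackrel{H}{\times}X$ (degree $\dim X + g - h$) goes to the fundamental class of $X$ (degree $\dim X$); the paper's use of $\Omega^G_*$ and $\Omega^H_*$ with an unadorned $*$ elides this shift. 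Your observation that the niveau-truncation indices $d+l_j-h-j$ agree on both sides is correct and is the part that actually needed checking, since a mismatch there would break compatibility with the quotients defining ${\Omega^G_i(-)}_j$; stating the degree shift explicitly would make the argument airtight.
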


\subsection{Fundamental class of cobordism cycles}\label{subsection:FCL}
Let $X \in \sV_G$ and let $Y \xrightarrow{f} X$ be a morphism in $\sV_G$
such that $Y$ is smooth of dimension $d$ and $f$ is projective. For any
$j \ge 0$ and any $l$-dimensional good pair $(V_j, U_j)$, 
$[Y_G \xrightarrow{f_G} X_G]$ is an ordinary cobordism cycle of dimension
$d+l-g$ by [{\sl loc.cit.}, Lemma~5.3] and hence defines an element 
$\alpha_j \in {\Omega^G_d(X)}_j$. Moreover, it is evident that the image of 
$\alpha_{j'}$ is $\alpha_j$ for $j' \ge j$. Hence we get a unique element
$\alpha \in \Omega^G_d(X)$, called the {\sl G-equivariant fundamental class}
of the cobordism cycle $[Y \xrightarrow{f} X]$. We also see from this
more generally that if $[Y \xrightarrow{f} X, L_1, \cdots, L_r]$ is as
above with each $L_i$ a $G$-equivariant line bundle on $Y$, then this defines
a unique class in $\Omega^G_{d-r}(X)$. It is interesting question to ask under 
what conditions on the group $G$, the equivariant cobordism group 
$\Omega^G_*(X)$ is generated by the fundamental classes of $G$-equivariant
cobordism cycles on $X$. We shall show in this text that this is indeed true 
for a torus action on smooth varieties.  

\subsection{Basic properties}
The following result summarizes the basic properties of the equivariant
cobordism.

\begin{thm}\label{thm:Basic}$(${\sl cf.} \cite[Theorems~5.1, 5.4]{Krishna4}$)$
The equivariant algebraic cobordism satisfies the following properties. \\
$(i)$ {\sl Functoriality :} The assignment $X \mapsto \Omega_*(X)$ is
covariant for projective maps and contravariant for smooth maps in
$\sV_G$. It is also contravariant for l.c.i. morphisms in $\sV_G$. 
Moreover, for a fiber diagram 
\[
\xymatrix@C.7pc{
X' \ar[r]^{g'} \ar[d]_{f'} & X \ar[d]^{f} \\
Y' \ar[r]_{g} & Y}
\]
in $\sV_G$ with $f$ projective and $g$ smooth, one has 
$g^* \circ f_* = {f'}_* \circ {g'}^* : \Omega^G_*(X) \to \Omega^G_*(Y')$.
\\
$(ii) \ Localization :$ For a $G$-scheme $X$ and a closed $G$-invariant
subscheme $Z \subset X$ with complement $U$, 
there is an exact sequence
\[
\Omega^G_*(Z) \to \Omega^G_*(X) \to \Omega^G_*(U) \to 0.
\]
$(iii) \ Homotopy :$  If $f : E \to X$ is a $G$-equivariant vector bundle,
then $f^*: \Omega^G_*(X) \xrightarrow{\cong} \Omega^G_*(E)$. \\
$(iv) \ Chern \ classes :$ For any $G$-equivariant vector bundle $E
\xrightarrow{f} X$ of rank $r$, there are equivariant Chern class operators
$c^G_m(E) : \Omega^G_*(X) \to \Omega^G_{*-m}(X)$ for $0 \le m \le r$ with
$c^G_0(E) = 1$. These Chern classes 
have same functoriality properties as in the non-equivariant case.
Moreover, they satisfy the Whitney sum formula. \\
$(v) \ Free \ action :$ If $G$ acts freely on $X$ with quotient $Y$, then
$\Omega^G_*(X) \xrightarrow{\cong} \Omega_*(Y)$. \\
$(vi) \ Exterior \ Product :$ There is a natural product map
\[
\Omega^G_i(X) \otimes_{\Z} \Omega^G_{i'}(X') \to \Omega^G_{i+i'}(X \times X').
\]
In particular, $\Omega^G_*(k)$ is a graded algebra and $\Omega^G_*(X)$ is
a graded $\Omega^G_*(k)$-module for every $X \in \sV_G$. 
For $X$ smooth, the pull-back via the diagonal $X \inj X \times X$ turns
$\Omega^*_G(X)$ into an $S(G)$-algebra. \\
$(vii) \ Projection \ formula :$ For a projective map $f : X' \to X$ in
$\sV^S_G$, one has for $x \in \Omega^G_*(X)$ and $x' \in \Omega^G_*(X')$,
the formula : $f_*\left(x' \cdot f^*(x)\right) = f_*(x') \cdot x$. \\
\end{thm}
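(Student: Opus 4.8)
\emph{Proof proposal.} All seven assertions are obtained by transporting the corresponding property of Levine--Morel cobordism \cite{LM} along the mixed-space construction $X\mapsto X\stackrel{G}{\times}U_j$. Fix, via Theorem~\ref{thm:NO-Niveu}, a sequence of $l_j$-dimensional good pairs $(V_j,U_j)$ with $V_{j+1}=V_j\oplus W_j$ and $U_j\oplus W_j\subset U_{j+1}$, so that $\Omega^G_i(X)\xrightarrow{\cong}\varprojlim_j\Omega_{i+l_j-g}\bigl(X\stackrel{G}{\times}U_j\bigr)$ for every $X\in\sV_G$, the transition maps of the pro-system being an open restriction followed by the inverse of the vector-bundle pull-back $\Omega_*\bigl(X\stackrel{G}{\times}U_j\bigr)\xrightarrow{\cong}\Omega_*\bigl(X\stackrel{G}{\times}(U_j\oplus W_j)\bigr)$. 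The plan is, for each operation, to build it levelwise on the spaces $X\stackrel{G}{\times}U_j$ out of the \cite{LM} theory, check that it commutes with the transition maps, and pass to $\varprojlim_j$. Throughout one uses that the mixed-space functor carries projective (resp.\ smooth, l.c.i., vector-bundle) $G$-morphisms to morphisms of the same type and preserves fiber squares \cite[\S 5]{Krishna4}; independence of the good pairs is already known.

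The ``formal'' parts follow at once. For (i): for a projective $G$-morphism $f$ the maps $f\stackrel{G}{\times}\id$ are projective, their \cite{LM} push-forwards do not raise image-dimension (hence descend through the niveau quotients) and commute with smooth pull-back by base change; smooth and l.c.i.\ pull-backs and the identity $g^*f_*=f'_*g'^*$ are inherited levelwise. Property (iii) is non-equivariant homotopy invariance for the vector bundle $E\stackrel{G}{\times}U_j\to X\stackrel{G}{\times}U_j$, and (iv) is given by the Chern operators $c_m\bigl(E\stackrel{G}{\times}U_j\bigr)$, whose functoriality and Whitney formula pass to the limit. For (vi): $(V_j\times V_j,\,U_j\times U_j)$ is a good pair for $G\times G$ with $(X\times X')\stackrel{G\times G}{\times}(U_j\times U_j)=\bigl(X\stackrel{G}{\times}U_j\bigr)\times\bigl(X'\stackrel{G}{\times}U_j\bigr)$; composing the \cite{LM} cross product with pull-back along the l.c.i.\ map $(X\times X')\stackrel{G}{\times}U_j\to(X\times X')\stackrel{G\times G}{\times}(U_j\times U_j)$ coming from the diagonal of $U_j$ and passing to the limit gives the exterior product and the $\Omega^G_*(k)$-module structure, while for $X$ smooth the $S(G)$-algebra structure on $\Omega^*_G(X)$ is the pull-back along the regular embedding $X\stackrel{G}{\times}U_j\hookrightarrow(X\times X)\stackrel{G}{\times}U_j$. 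Finally (vii) is the \cite{LM} projection formula for $f\stackrel{G}{\times}\id$.

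For the free-action statement (v), if $G$ acts freely on $X$ with quotient $Y$, then $X\stackrel{G}{\times}U_j$ is the complement in the rank-$l_j$ vector bundle $X\stackrel{G}{\times}V_j\to Y$ of a closed subset of codimension $\ge j$. By the localization sequence the restriction $\Omega_*\bigl(X\stackrel{G}{\times}V_j\bigr)\to\Omega_*\bigl(X\stackrel{G}{\times}U_j\bigr)$ has kernel and cokernel contained in the part of the niveau filtration killed in the definition of $\Omega^G_*(X)_j$, and by homotopy invariance $\Omega_*\bigl(X\stackrel{G}{\times}V_j\bigr)\cong\Omega_*(Y)$ up to the dimension shift built into the indexing; combining these levelwise and passing to $\varprojlim_j$ gives (v).

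The one genuinely delicate statement is the localization sequence (ii). For a closed $G$-invariant $Z\subset X$ with complement $U$, each level gives $Z\stackrel{G}{\times}U_j\hookrightarrow X\stackrel{G}{\times}U_j\hookleftarrow U\stackrel{G}{\times}U_j$ and hence the right-exact sequence $\Omega_*\bigl(Z\stackrel{G}{\times}U_j\bigr)\to\Omega_*\bigl(X\stackrel{G}{\times}U_j\bigr)\to\Omega_*\bigl(U\stackrel{G}{\times}U_j\bigr)\to0$ of \cite{LM}. Two points must be verified. First, proper push-forward and open restriction must carry the niveau filtration \eqref{eqn:niveau1} to the niveau filtration with the \emph{correct}, dimension-dependent index shifts --- the quotient defining $\Omega^G_*(Z)_j$ is taken at level $F_{\dim Z+l_j-g-j}$, not at $F_{\dim X+l_j-g-j}$ --- so that the three-term sequence descends to an exact sequence $\Omega^G_*(Z)_j\to\Omega^G_*(X)_j\to\Omega^G_*(U)_j\to0$. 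Second, $\varprojlim_j$ must preserve this right-exactness; this holds because every transition map $\Omega^G_i(-)_{j'}\surj\Omega^G_i(-)_j$ is surjective, so the pro-system of kernels is Mittag--Leffler and its $\varprojlim^1$ vanishes. I expect the first point --- reconciling the two index conventions for $F_\bullet$ along the localization sequence --- to be the main obstacle; everything else is routine bookkeeping over \cite{LM}.
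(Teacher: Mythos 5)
The paper states this theorem without proof, deferring to \cite[Theorems~5.1, 5.4]{Krishna4}, and your proposal reconstructs exactly the argument used there: build each operation levelwise on the Borel constructions $X\stackrel{G}{\times}U_j$ from the Levine--Morel theory, check compatibility with the niveau quotients and the transition maps, and pass to $\varprojlim_j$. You correctly single out the two delicate points: preservation of exactness of the localization sequence by the niveau filtration (this is \cite[Theorem~3.4]{Krishna4}, also quoted in the proof of Lemma~\ref{lem:filter-Equiv} in this paper), and the $\varprojlim^1$-vanishing coming from the surjectivity of the transition maps $\Omega^G_i(-)_{j'}\surj\Omega^G_i(-)_j$; your Mittag--Leffler reasoning for the latter is correct because surjectivity of $A_{j'}\to A_j$ forces surjectivity of the images $K_{j'}\to K_j$ inside $B_{j'}\to B_j$. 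One caveat on presentation rather than substance: you should not take Theorem~\ref{thm:NO-Niveu} as your starting framework, since in \cite{Krishna4} that result (Theorem~6.1 there) is proved \emph{after}, and using, the basic properties you are establishing; this is harmless because your actual arguments for (ii) and (v) already revert to the defining niveau quotients, and the remaining parts can be phrased at that level just as easily.
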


\subsection{Formal group law}\label{subsection:FGL*}
We recall from \cite{Krishna4} that the first Chern class of the tensor 
product of two equivariant line bundles satisfies the formal group law of the 
ordinary cobordism. That is, for $L_1, L_2 \in {\rm Pic}^G(X)$, one has
\begin{equation}\label{eqn:FGL}
c^G_1(L_1 \otimes L_2) = c^G_1(L_1) + c^G_1(L_2) + c^G_1(L_1) c^G_1(L_2)
{\underset{i,j \ge 1}\sum} \ a_{i,j}  \left(c^G_1(L_1)\right)^{i-1}
\left(c^G_1(L_2)\right)^{j-1},
\end{equation}
where $F(u, v) = u+v + uv{\underset{i,j \ge 1}\sum} \ a_{i,j}u^{i-1}v^{j-1},
\ a_{i,j} \in \bL_{1-i-j}$ is the universal formal group law on $\bL$.
We shall often write $c^G_1(L_1 \otimes L_2) = c^G_1(L_1) {+}_{F} c^G_1(L_2)$.

If $X$ is smooth, the commutative sub-$\bL$-algebra (under composition) of 
${\rm End}_{\bL}\left(\Omega^*_G(X)\right)$ generated
by the Chern classes of the vector bundles is canonically identified 
with a sub-$\bL$-algebra of the cobordism ring $\Omega^*_G(X)$ via the
identification $c^G_i(E) \mapsto \wt{c^G_i(E)} =
c^G_i(E)\left([X \xrightarrow{id} X]\right)$.
We shall denote this image also by $c^G_i(E)$ or, by $c^G_i$ if the underlying 
vector bundle is understood. The formal group law 
of the algebraic cobordism then gives a map of pointed sets
\begin{equation}\label{eqn:Chern-map}
\Pic^G(X) \to \Omega^1_G(X), \ \ L \mapsto c^G_1(L)
\end{equation}
such that $c^G_1(L_1 \otimes L_1) = c^G_1(L_1) +_F c^G_1(L_2)$.
It is known that even though $c^G_1(L)$ is not nilpotent in $\Omega^*_G(X)$
for $L \in {\rm Pic}^G(X)$ (unlike in the ordinary case), 
$c^G_1(L_1) +_F c^G_1(L_2)$ is a well defined element of $\Omega^1_G(X)$. 
In this paper, we shall view the (equivariant) Chern classes as elements of 
the (equivariant) cobordism ring of a smooth variety in the above sense.
In the rest of this paper, the sum $x +_F y$ for $x,y \in \Omega^1_G(X)$ will 
denote the element $F(x,y) \in \Omega^1_G(X)$, the addition according to the 
formal group law.

\subsection{Cobordism ring of classifying spaces}\label{subsection:CCS}
Let $R$ be a Noetherian ring and let $A = {\underset{j \in \Z}\oplus} A_j$ be 
a $\Z$-graded $R$-algebra with $R \subset A_0$. Recall that the {\sl 
graded power series ring} $S^{(n)} = {\underset{i \in \Z} \oplus} S_i$ is a
graded ring such that $S_i$ is the set of formal power series of the form 
$f({\bf t}) = {\underset{m({\bf t}) \in \sC} \sum} a_{m({\bf t})} m({\bf t})$
such that $a_{m({\bf t})}$ is a homogeneous element of $A$ of degree 
$|a_{m({\bf t})}| $ and $|a_{m({\bf t})}| + |m({\bf t})| = i$. 
Here, $\sC$ is the set of all monomials in 
${\bf t} = (t_1, \cdots , t_n)$ and $|m({\bf t})| = i_1 + \cdots + i_n$ if 
$m({\bf t}) = t^{i_1}_1 \cdots t^{i_n}_n$.
We call $|m({\bf t})|$ to be the degree of the monomial $m({\bf t})$.

We shall often write the above graded power series ring as
${A[[{\bf t}]]}_{\rm gr}$ to distinguish it from the
usual formal power series ring. 
Notice that if $A$ is only non-negatively graded, then $S^{(n)}$ is nothing but 
the standard polynomial ring $A[t_1, \cdots , t_n]$ over $A$. It is also easy 
to see that $S^{(n)}$ is indeed a graded ring which is a subring of the formal 
power series ring $A[[t_1, \cdots , t_n]]$. The following result summarizes 
some basic properties of these rings. The proof is straightforward and is left 
as an exercise.

\begin{lem}\label{lem:GPSR}
$(i)$ There are inclusions of rings $A[t_1, \cdots , t_n] \subset S^{(n)} \subset
A[[t_1, \cdots , t_n]]$, where the first is an inclusion of graded rings. \\
$(ii)$ These inclusions are analytic isomorphisms with respect to the
${\bf t}$-adic topology. In particular, the induced maps of the associated
graded rings
\[
A[t_1, \cdots , t_n] \to {\rm Gr}_{({\bf t})} S^n \to 
{\rm Gr}_{({\bf t})} A[[t_1, \cdots , t_n]]
\]
are isomorphisms. \\
$(iii)$ $S^{(n-1)}[[t_i]]_{\rm gr} \xrightarrow{\cong} S^{(n)}$. \\
$(iv)$ $\frac{S^{(n)}}{(t_{i_1}, \cdots , t_{i_r})} \xrightarrow{\cong} S^{(n-r)}$ 
for any $n \ge r \ge 1$, where $S^{(0)} = A$. \\ 
$(v)$ The sequence $\{t_1, \cdots , t_n\}$ is a regular sequence in $S^{(n)}$.
\\
$(vi)$ If $A = R[x_1, x_2, \cdots ]$ is a polynomial ring with
$|x_i| < 0$ and ${\underset{i \to \infty}{\rm lim}} \ |x_i| = - \infty$, then
$S^{(n)} \xrightarrow{\cong}
{\underset{i}\varprojlim} \ {R[x_1, \cdots , x_i][[{\bf t}]]}_{\rm gr}$.
\end{lem}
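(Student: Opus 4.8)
The plan is to treat the six statements as a sequence of elementary observations about the graded power series ring $S^{(n)} = A[[t_1,\dots,t_n]]_{\rm gr}$, proceeding essentially in the order listed, since later parts lean on earlier ones. For $(i)$, I would check directly from the definition that every polynomial in $A[t_1,\dots,t_n]$ is a finite sum of terms $a\, m(\mathbf t)$ with $|a| + |m(\mathbf t)|$ bounded, hence lies in $S^{(n)}$; that the sum and product of two formal power series each of whose coefficient functions has the ``degree $i$'' constraint again satisfies such a constraint (so $S^{(n)}$ really is a subring, graded by $i$); and that $S^{(n)}$ sits inside $A[[\mathbf t]]$ tautologically by forgetting the grading. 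For $(ii)$, the point is that modulo the ideal $(\mathbf t)^N$, all three rings have the same image: a class in $A[[\mathbf t]]/(\mathbf t)^N$ is represented by a polynomial of $\mathbf t$-degree $<N$, which automatically lies in both $A[t_1,\dots,t_n]$ and $S^{(n)}$; this gives the analytic isomorphism statement and, passing to associated graded pieces, the isomorphisms on $\mathrm{Gr}_{(\mathbf t)}$ (each graded piece being a free $A$-module on the monomials of a fixed $\mathbf t$-degree).

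For $(iii)$ I would unwind both sides: an element of $S^{(n-1)}[[t_n]]_{\rm gr}$ is a power series $\sum_k f_k\, t_n^k$ with $f_k \in S^{(n-1)}$ homogeneous of degree $i-k$, and each such $f_k$ is itself a constrained power series in $t_1,\dots,t_{n-1}$; collecting coefficients of all monomials in $t_1,\dots,t_n$ exhibits exactly the defining condition for $S^{(n)}$, and conversely. Part $(iv)$ reduces by induction (using $(iii)$) to the case $r=1$, $i_1 = n$: the quotient $S^{(n)}/(t_n)$ is computed via the presentation $S^{(n-1)}[[t_n]]_{\rm gr}$, and killing $t_n$ in a (graded) power series ring in $t_n$ over $S^{(n-1)}$ leaves $S^{(n-1)}$; one only has to note that $t_n$ is a nonzerodivisor so no extra relations are introduced, which also feeds into $(v)$. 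For $(v)$, again reduce to $(iii)$: $t_n$ is a nonzerodivisor on $S^{(n-1)}[[t_n]]_{\rm gr}$ because a power series annihilated by $t_n$ has all coefficients zero, and then $\{t_1,\dots,t_{n-1}\}$ is a regular sequence on the quotient $S^{(n-1)}$ by the inductive hypothesis; the base case $n=1$ is that $t_1$ is a nonzerodivisor on $A[[t_1]]_{\rm gr}$.

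For $(vi)$, the content is that when $A = R[x_1,x_2,\dots]$ with $|x_i| \to -\infty$, a homogeneous element of $S^{(n)}$ of degree $i$ only involves finitely many of the $x_i$ once the $\mathbf t$-degree of the monomial is fixed (since $|a_{m(\mathbf t)}| = i - |m(\mathbf t)|$ is bounded below and the $x_i$ have negative degrees tending to $-\infty$, only finitely many monomials in the $x$'s have that degree); however, as the $\mathbf t$-degree grows the allowed $x$-monomials change, so a given element of $S^{(n)}$ of degree $i$ may involve infinitely many of the $x_i$ in total but, truncated at each $\mathbf t$-degree, involves only finitely many. This is precisely the compatibility that identifies $S^{(n)}$ with $\varprojlim_i R[x_1,\dots,x_i][[\mathbf t]]_{\rm gr}$: the transition maps are the obvious surjections and an element of the inverse limit is a coherent family of truncations-by-$\mathbf t$-degree. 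I expect $(vi)$ to be the only genuinely fiddly point — one must keep careful track of which finiteness is ``per fixed $\mathbf t$-degree'' versus ``globally'' — while $(i)$–$(v)$ are bookkeeping with the grading convention; the whole lemma is, as the text says, an exercise, and the write-up would simply record these reductions rather than belabor the coefficient manipulations.
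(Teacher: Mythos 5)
The paper gives no proof of this lemma — it simply says ``the proof is straightforward and is left as an exercise'' — so there is no argument of the author's to compare against; your sketch is correct on its own merits. Parts $(i)$--$(v)$ are handled soundly, with $(iii)$ being the reindexing step that drives $(iv)$ and $(v)$ by induction; note only that as written your induction for $(v)$ produces regularity of the sequence $t_n, t_1, \dots, t_{n-1}$ rather than $t_1, \dots, t_n$, which is harmless because the definition of $S^{(n)}$ is symmetric in the $t_i$ (or run the same induction stripping off $t_1$ instead of $t_n$). For $(vi)$ you correctly isolate the one nontrivial finiteness point: since $|x_i| < 0$ and $|x_i| \to -\infty$, each graded piece $A_j = R[\mathbf{x}]_j$ is spanned by monomials in only finitely many of the $x_i$, so $A_j = R[x_1, \dots, x_N]_j$ for $N \gg 0$ and the inverse system over $i$ stabilizes in each bidegree $(j, m(\mathbf{t}))$. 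One small slip in the final sentence: an element of $\varprojlim_i R[x_1, \dots, x_i][[\mathbf{t}]]_{\rm gr}$ is a coherent family of truncations in the $x$-variables (setting $x_l = 0$ for $l > i$), not of truncations by $\mathbf{t}$-degree; the body of your paragraph makes clear you have the right picture, so this reads as a typo rather than a misunderstanding.
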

 
Since we shall mostly be dealing with the graded power series ring in this 
text, we make the convention of writing 
${A[[{\bf t}]]}_{\rm gr}$ as 
$A[[{\bf t}]]$, while the standard formal
power series ring will be written as $\widehat{A[[{\bf t}]]}$. 

It is known \cite[Proposition~6.5]{Krishna4} that if $T$ is a split torus of 
rank $n$ and if $\{\chi_1, \cdots , \chi_n \}$ is a chosen basis 
of  the character group $\widehat{T}$, then there is a canonical isomorphism of
graded rings 
\begin{equation}\label{eqn:CBT*}
\bL[[t_1, \cdots , t_n]] \xrightarrow{\cong} \Omega^*(BT), \ \ 
t_i \mapsto c^T_1(L_{\chi_i}).
\end{equation}
Here, $L_{\chi}$ is the $T$-equivariant line bundle on ${\rm Spec}(k)$
corresponding to the character $\chi$ of $T$.
One also has isomorphisms 
\begin{equation}\label{eqn:BT2}
\Omega^*(BGL_n) \xrightarrow{\cong} \bL[[\gamma_1, \cdots , \gamma_n]] \ \ 
{\rm and} \ \
\Omega^*(BSL_n) \xrightarrow{\cong} \bL[[\gamma_2, \cdots , \gamma_n]]
\end{equation}
of graded $\bL$-algebras, where $\gamma_i$'s  are the elementary symmetric
polynomials in $t_1, \cdots , t_n$ that occur in $\Omega^*(BT)$.

We finally recall the following result of \cite{Krishna4} that will be useful
for us.

\begin{thm}\label{thm:W-inv}$(${\sl cf.} \cite[Theorem~8.7]{Krishna4}$)$
Let $G$ be a connected linear algebraic group and let $L$ be a Levi subgroup
of $G$ with a split maximal torus $T$. Let $W$ denote the Weyl group of $L$
with respect to $T$.
Then for any $X \in \sV_G$, the natural map 
\begin{equation}\label{eqn:W-inv1}
\Omega^G_*(X) \to {\left(\Omega^T_*(X)\right)}^W
\end{equation}
is an isomorphism.
\end{thm}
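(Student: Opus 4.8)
The plan is to factor the map $\Omega^G_*(X)\to(\Omega^T_*(X))^W$ through a Borel subgroup $B\supseteq T$ and to show, with $\Q$-coefficients, that the restriction $r^G_{B,X}\colon\Omega^G_*(X)\to\Omega^B_*(X)$ identifies $\Omega^B_*(X)$ with $\Omega^G_*(X)\otimes_{S(G)}S(T)$, as an $S(T)$-module carrying a $W$-action; since $S(T)^W=S(G)$ and $|W|$ is invertible, taking $W$-invariants then gives back $\Omega^G_*(X)$, and the canonical isomorphism $\Omega^B_*(X)\cong\Omega^T_*(X)$ finishes the proof.

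First I would reduce to $G$ reductive. Writing $G=L\ltimes R_u(G)$ with $L$ a Levi subgroup, for a good pair $(V_j,U_j)$ the map $X\stackrel{L}{\times}U_j\to X\stackrel{G}{\times}U_j$ is a Zariski-locally trivial fibration with fibre $G/L\cong R_u(G)$; since $R_u(G)$ is split unipotent this fibration is an iterated affine bundle, so the pullback on $\Omega_*$ is an isomorphism by homotopy invariance (Theorem~\ref{thm:Basic}(iii)), and $\varprojlim_j$ gives $r^G_{L,X}\colon\Omega^G_*(X)\xrightarrow{\cong}\Omega^L_*(X)$; the same argument for $B=T\ltimes R_u(B)$ gives a canonical $\Omega^B_*(X)\xrightarrow{\cong}\Omega^T_*(X)$. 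So assume $G$ reductive; the $W=N_G(T)/T$-action is transported from $\Omega^T_*(X)$, and $r^G_{B,X}$ (equivalently $r^G_{T,X}$) is $W$-invariant because it is pullback from a $G$-scheme and $W$ is represented inside $G$.

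I would then isolate the two non-formal inputs. The first is the case $X=\Spec k$: with $\Q$-coefficients $S(G)=(S(T))^W$ and $S(T)$ is free over $S(G)$ of rank $|W|$. For $GL_n$ and $SL_n$ this is explicit from \eqref{eqn:CBT*}, \eqref{eqn:BT2}, since the elementary symmetric polynomials $\gamma_i$ form a regular sequence in $\bL[[t_1,\dots,t_n]]$ with $\bL$-free quotient of rank $n!$ (cf.\ Lemma~\ref{lem:GPSR}); in general it follows from the description of $\Omega^*(BG)_\Q$ via $\CH^*(BG)_\Q=(\CH^*(BT)_\Q)^W$ and the formal group law, i.e.\ from \cite{CPZ}. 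The second input is that $\Omega^*(G/B)=S(T)\otimes_{S(G)}\bL$ with $\Q$-coefficients — again by \cite{CPZ} — so that for any $S(G)$-basis $\{\widetilde b_w\}_{w\in W}$ of $S(T)$ consisting of monomials in the $t_i=c^T_1(L_{\chi_i})$, the reductions $\{b_w\}$ form an $\bL$-basis of $\Omega^*(G/B)$. Now combine these: using the $G$-isomorphism $G\stackrel{B}{\times}X\cong G/B\times X$ and the Morita isomorphism (Proposition~\ref{prop:Morita}), $\Omega^B_*(X)\cong\Omega^G_*(G/B\times X)$ and $r^G_{B,X}$ becomes pullback along the projective $G$-map $G/B\times X\to X$; at finite level $X\stackrel{B}{\times}U_j\to X\stackrel{G}{\times}U_j$ is the $G/B$-bundle associated to the $G$-torsor $U_j\to U_j/G$, and it carries tautological line bundles $L_\chi$ ($\chi\in\widehat T$) restricting on each fibre to the standard line bundles on $G/B$. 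Since the corresponding monomials $b_w$ in their Chern classes restrict to an $\bL$-basis of $\Omega^*(G/B)$, a Leray–Hirsch argument — Noetherian induction over a $B$-invariant Bruhat stratification, using localization and homotopy from Theorem~\ref{thm:Basic} and comparing associated graded pieces — shows these monomials form an $\Omega^*(X\stackrel{G}{\times}U_j)$-basis of $\Omega^*(X\stackrel{B}{\times}U_j)$; passing to $\varprojlim_j$ and tracking the niveau filtrations (whose relative shift is bounded by $\dim G/B$, so the limit commutes with the finite direct sum) yields a $W$-equivariant isomorphism of $S(T)_\Q$-modules $\Omega^G_*(X)_\Q\otimes_{S(G)_\Q}S(T)_\Q\xrightarrow{\cong}\Omega^B_*(X)_\Q$.

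Finally, since $|W|$ is invertible the averaging idempotent $e=\tfrac{1}{|W|}\sum_{w\in W}w$ splits $S(T)_\Q=S(G)_\Q\oplus N$ as $S(G)_\Q[W]$-modules with $N^W=0$; tensoring with $\Omega^G_*(X)_\Q$ and applying $1\otimes e$ kills the $N$-summand, so $(\Omega^B_*(X)_\Q)^W=\Omega^G_*(X)_\Q$ with inclusion $r^G_{B,X}$, and transporting through $\Omega^B_*(X)_\Q\cong\Omega^T_*(X)_\Q$ gives the theorem. The main obstacle is the Leray–Hirsch step — proving it for equivariant cobordism of the $G/B$-bundles and commuting it with the inverse limit over the approximations $U_j$ and their niveau filtrations — together with its non-formal input, the structure of $\Omega^*(G/B)$ and of $S(T)$ over $S(G)$ from \cite{CPZ}. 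The latter, unlike its counterpart for Chow groups used by Brion, genuinely requires rational coefficients (since $\Omega^*(BG)$ and $H^*(BG)$ can have torsion for $G$ not of type $GL_n$), and this — with the averaging step — is exactly where the $\Q$-coefficient hypothesis is used.
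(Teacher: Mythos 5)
The paper does not actually contain a proof of this statement: it is recalled verbatim from \cite[Theorem~8.7]{Krishna4}, so there is no internal proof to compare against. Judged on its own, your outline (reduce to reductive $G$, identify $\Omega^B_*(X)\cong\Omega^T_*(X)$, establish a Leray--Hirsch isomorphism $\Omega^B_*(X)_\Q\cong\Omega^G_*(X)_\Q\otimes_{S(G)_\Q}S(T)_\Q$ using the $G/B$-bundle $X\stackrel{B}{\times}U_j\to X\stackrel{G}{\times}U_j$, then average over $W$) is the standard route, and the reduction steps via $R_u(G)$ and the Morita identification are fine. But there are two real gaps.

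First, and most seriously, your non-formal inputs are circular in the present paper's logical order. You invoke, from \cite{CPZ}, both that $S(G)_\Q=(S(T)_\Q)^W$ with $S(T)_\Q$ free of rank $|W|$, and that $\Omega^*(G/B)_\Q\cong S(T)_\Q\otimes_{S(G)_\Q}\bL_\Q$. However, what \cite{CPZ} (Corollaries~13.6, 13.7) actually gives is that the kernel of $S(T)\to\Omega^*(G/B)$ is generated by $(I_T)^W$; the identification $(I_T)^W=I_G$, and hence the uncompleted formula $\Omega^*(G/B)\cong S(T)\otimes_{S(G)}\bL$, is \emph{deduced} in this paper's Theorem~\ref{thm:flag-V} precisely \emph{from} Theorem~\ref{thm:W-inv}. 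Likewise the case $X=\Spec k$ of the assertion $S(G)_\Q=(S(T)_\Q)^W$ is exactly what you are trying to prove. You would need either an independent argument for $S(G)_\Q=(S(T)_\Q)^W$ (for instance by running the Leray--Hirsch comparison first for $X=\Spec k$, where it reads $S(T)_\Q\cong S(G)_\Q\otimes_{\bL_\Q}\Omega^*(G/B)_\Q$, combined with the Chevalley freeness of $S(T)_\Q$ over $(S(T)_\Q)^W$), or a restructuring that proves the general statement and the $X=\Spec k$ case simultaneously. Second, the Leray--Hirsch step itself — that the classes $b_w$ pulled back along $X\stackrel{B}{\times}U_j\to X\stackrel{G}{\times}U_j$ form a basis over $\Omega^*(X\stackrel{G}{\times}U_j)$, that the resulting splitting is $W$-equivariant for the $N_G(T)/T$-action, and that it is compatible with the niveau filtrations uniformly in $j$ so as to pass to $\varprojlim_j$ — is the entire technical content of the theorem, and as written it is asserted rather than proved. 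Your remark that the filtration shift is bounded by $\dim G/B$ is the right idea for the limit, but the $W$-equivariance of the decomposition (needed to apply the averaging idempotent to the $S(T)$ factor alone) is not addressed at all.
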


\section{The forgetful map}\label{section:FF-map}
In this section, we study the forgetful map 
\[
r^G_X : \Omega^G_*(X) \to \Omega_*(X)
\]
of ~\eqref{eqn:res} from the
equivariant to the non-equivariant cobordism when $G$ is a split torus. 
It was shown by Brion in \cite[Corollary~2.3]{Brion2} (see also 
\cite[Corollary~1.4]{Krishna1}) that the natural map 
$CH^G_*(X){\otimes}_{S(G)} \Z \to CH_*(X)$ is an isomorphism. Our aim in this
section is to prove an analogous result for the algebraic cobordism.
We do this by using a technique which appears to be simpler than the one 
Brion used for studying the Chow groups. 
For the rest of this paper, we shall denote the cobordism ring
$S(T) = \Omega^*_T(k)$ of the classifying space of a split torus $T$
simply by $S$. Let $I_T \subset S$ be the augmentation ideal so that
$S/{I_T} \xrightarrow{\cong} \bL$. We first prove the following very useful
self-intersection formula for the equivariant cobordism.
\begin{prop}[Self-intersection formula]\label{prop:SIF}
Let $G$ be a linear algebraic group and let $Y \xrightarrow{f} X$ be a regular 
$G$-equivariant embedding in $\sV_G$ of 
pure codimension $d$ and let $N_{Y/X}$ denote the equivariant normal bundle of 
$Y$ inside $X$. Then one has for every $y \in \Omega^G_*(Y)$,
$f^* \circ f_*(y) = c^G_d(N_{Y/X}) (y)$.
\end{prop}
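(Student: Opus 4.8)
The plan is to reduce the equivariant self-intersection formula to the known non-equivariant one, which follows from the deformation to the normal cone in \cite{LM}, by exploiting the very definition of equivariant cobordism via good pairs. Fix $y \in \Omega^G_*(Y)$. For each $j \ge 0$, choose an $l_j$-dimensional good pair $(V_j, U_j)$ for the $G$-action, simultaneously for $X$ and $Y$ (restricting a good pair for $X$ to $Y$, or just using a good pair for $G$ itself, gives a compatible choice). Then $Y_G := Y \stackrel{G}{\times} U_j \to X \stackrel{G}{\times} U_j =: X_G$ is again a regular closed embedding of pure codimension $d$, because the construction $(-) \stackrel{G}{\times} U_j$ is a flat (indeed smooth, since $U_j \to U_j/G$ is a $G$-torsor and $X \times U_j \to X \stackrel{G}{\times} U_j$ is smooth) base change that preserves regular embeddings, and its normal bundle is $N_{Y/X} \stackrel{G}{\times} U_j$, which is exactly the vector bundle on $Y_G$ whose Chern classes compute $c^G_*(N_{Y/X})$ at level $j$. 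First I would verify these two claims carefully — that the mixed-space construction takes an equivariant regular embedding to a regular embedding with the expected normal bundle — using that $X \stackrel{G}{\times} U_j$ is the quotient of $X \times U_j$ by a free $G$-action and that regularity of an embedding and the identification of its conormal sheaf can be checked after the faithfully flat cover $X \times U_j \to X \stackrel{G}{\times} U_j$.

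Next I would apply the non-equivariant self-intersection formula on the ordinary cobordism of the smooth (or at least finite-dimensional) schemes $X_G, Y_G$: for the class $y_j \in \Omega_{*+l_j-g}(Y_G)$ representing $y$ at level $j$, one has $f_G^* \circ (f_G)_*(y_j) = c_d(N_{Y/X} \stackrel{G}{\times} U_j)(y_j)$. Here I invoke the non-equivariant statement — this is the self-intersection formula of Levine–Morel, valid for regular embeddings of schemes in $\sV_k$ — so strictly I should make sure the ambient schemes are in the right category; since all our schemes are quasi-projective and $U_j$ can be chosen so that the quotients are quasi-projective, this is fine. Then I would pass to the limit over $j$: by \thmref{thm:NO-Niveu} (after choosing the sequence of good pairs compatibly so that $\Omega^G_*$ is literally the inverse limit of the $\Omega_*(X \stackrel{G}{\times} U_j)$), the maps $f^*$, $f_*$ and the Chern class operator $c^G_d(N_{Y/X})$ are all defined as the inverse limits of their level-$j$ counterparts and are compatible with the transition maps in the tower. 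Therefore the identity $f_G^* (f_G)_*(y_j) = c_d(N_{Y/X}\stackrel{G}{\times}U_j)(y_j)$ for all $j$ assembles to $f^* f_*(y) = c^G_d(N_{Y/X})(y)$ in $\Omega^G_*(Y)$.

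The main obstacle I anticipate is bookkeeping rather than conceptual: one must check that $f^*$ and $f_*$, and in particular the composite $f^* \circ f_*$, really do commute with the transition maps $\Omega^G_i(X)_{j'} \surj \Omega^G_i(X)_j$ in a way compatible with the level-$j$ non-equivariant formulas — i.e. that the restriction $f^*$ along a regular embedding is well-defined at each finite level and behaves functorially under enlarging the good pair, and likewise that the Chern class operator $c^G_d(N_{Y/X})$ is computed at level $j$ by $c_d$ of the bundle $N_{Y/X}\stackrel{G}{\times}U_j$. Both of these are consequences of the basic properties in \thmref{thm:Basic} (functoriality of $f^*$ for l.c.i.\ morphisms, compatibility of equivariant Chern classes with the good-pair approximation) together with the fact that pull-back along the smooth map $X \stackrel{H}{\times} U_j \to X \stackrel{G}{\times} U_j$ commutes with projective push-forward; but writing it out cleanly, and making sure the ambient schemes stay quasi-projective so that the non-equivariant self-intersection formula applies verbatim, is where the real work lies. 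A secondary point to handle is that $Y$ and $X$ need not be smooth, so the Chern class must be interpreted as an operator $c^G_d(N_{Y/X}) : \Omega^G_*(Y) \to \Omega^G_{*-d}(Y)$ rather than as multiplication by an element; this is exactly the setting of \thmref{thm:Basic}(iv), so the statement makes sense, but I would make the operator interpretation explicit at the outset.
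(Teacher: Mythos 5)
Your proposal follows essentially the same route as the paper: reduce the equivariant statement to the non-equivariant self-intersection formula applied at each level $j$ of the good-pair tower, check that $\bar f^*$, $\bar f_*$, and $c_d(N_G)$ are compatible with the transition maps, and pass to the inverse limit. The only notable difference is that you cite the non-equivariant formula somewhat loosely as following from the deformation to the normal cone, whereas the paper pins this down to a short derivation from the refined pull-back construction and excess intersection formula (Lemma~6.6.2 and Theorem~6.6.9 of \cite{LM}, run through Fulton's Chow-group argument); this non-equivariant step is where the actual substance of the proof lies, and you should spell it out rather than treat it as already known.
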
 
\begin{proof} First we prove this for the non-equivariant algebraic cobordism.
But this is a direct consequence of the construction of the refined pull-back
map and the excess intersection formula for algebraic cobordism in 
\cite[Section~6]{LM}.
One simply has to use Lemma~6.6.2 and Theorem~6.6.9 of {\sl loc. cit.} and 
follow exactly the same argument as in the proof of the self-intersection
formula for Chow groups in \cite[Theorem~6.2]{Fulton}.

To prove the equivariant version, let $j \ge 0$ and let $(V_j, U_j)$ be a good
pair. Then $Y_G {\overset{\ov{f}}\inj} X_G$ is a regular 
closed embedding. Writing the normal bundle $N_{Y/X}$ simply by $N$, we see
that $N_G$ is the normal bundle of $Y_G$ inside $X_G$ and $c^G_d(N)$ is induced
by $c_d(N_G)$. The non-equivariant self-intersection formula yields
${\ov{f}}^* \circ {\ov{f}}_* = c_d(N_G) (-)$ on $\Omega_*(Y_G)$. 
Moreover, the maps ${\ov{f}}^*$ and 
${\ov{f}}_*$ descend to a compatible system of maps
({\sl cf.} \cite[Theorem~5.4]{Krishna4}) 
\[
{\Omega^G_{*}(Y)}_{j} \xrightarrow{{\ov{f}}_*} {\Omega^G_*(X)}_{j+d}, \
{\Omega^G_{*}(X)}_{j+d} \xrightarrow{{\ov{f}}^*} {\Omega^G_{*-d}(Y)}_{j+d},
\]
whose composite with the natural surjection ${\Omega^G_{*-d}(Y)}_{j+d} \surj
{\Omega^G_{*-d}(Y)}_{j}$ ({\sl cf.} \cite[Lemma~4.3]{Krishna4}) gives a map of 
inverse systems $\{{\Omega^G_{*}(Y)}_{j}\}
\xrightarrow{{\ov{f}}^* \circ {\ov{f}}_*} \{{\Omega^G_{*-d}(Y)}_j\}$. 

The equivariant Chern class $c^G_d(N)$ is induced by taking the inverse limit 
of maps $c^G_{d,j}(N_G) : \{{\Omega^G_{*}(Y)}_{j}\} \to \{{\Omega^G_{*-d}(Y)}_j\}$.
Since ${{\ov{f}}^* \circ {\ov{f}}_*} = c^G_{d,j}(N_G) (-)$ for each 
$j \ge 0$ by the non-equivariant case proven above, we conclude that
$f^* \circ f_* = c^G_d(N) (-)$ on the equivariant cobordism.
\end{proof}
\begin{cor}\label{cor:line-bundle}
Let $G$ be a linear algebraic group acting on a $k$-variety $X$ and let 
$p : L \to X$ be a $G$-equivariant line bundle. Let $f : X \to L$ be the
zero-section embedding. Then $f^* \circ p^* = {\rm Id}$ and $f^*\circ f_* =
c^G_1(L)$. 
\end{cor}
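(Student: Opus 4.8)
The plan is to deduce Corollary~\ref{cor:line-bundle} directly from the self-intersection formula of Proposition~\ref{prop:SIF}, together with the homotopy invariance of equivariant cobordism. First I would observe that the zero-section $f : X \to L$ of a $G$-equivariant line bundle $p : L \to X$ is a regular $G$-equivariant closed embedding of pure codimension $1$, so Proposition~\ref{prop:SIF} applies and gives $f^* \circ f_* (y) = c^G_1(N_{X/L})(y)$ for every $y \in \Omega^G_*(X)$. The key geometric input is the standard identification of the normal bundle of the zero-section: since $L \to X$ is a vector bundle (of rank $1$) with zero-section $f$, one has a canonical $G$-equivariant isomorphism $N_{X/L} \cong f^*L = L$, this being compatible with the $G$-actions because $p$ and $f$ are $G$-equivariant. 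Substituting this into the self-intersection formula yields $f^* \circ f_* = c^G_1(L)$, which is the second asserted identity.

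For the first identity $f^* \circ p^* = \id$, I would simply note that $p \circ f = \id_X$ as $G$-equivariant maps, and that by Theorem~\ref{thm:Basic}(i) the assignment $X \mapsto \Omega^G_*(X)$ is contravariantly functorial for smooth (indeed l.c.i.) morphisms in $\sV_G$; since $p$ is a $G$-equivariant vector bundle it is smooth, and $f$ is l.c.i., so $f^* \circ p^* = (p \circ f)^* = \id_X^* = \id$. Alternatively one can invoke Theorem~\ref{thm:Basic}(iii), which says $p^* : \Omega^G_*(X) \xrightarrow{\cong} \Omega^G_*(L)$ is an isomorphism; then $f^* \circ p^* = \id$ forces $f^*$ to be the inverse of $p^*$, and in particular an isomorphism, but for the statement we only need the composite identity, which is immediate from functoriality.

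I do not anticipate a serious obstacle here: both assertions are formal consequences of results already established in the excerpt. The only point requiring a word of care is the normal-bundle identification $N_{X/L} \cong L$ in the $G$-equivariant setting; this is the usual fact that the normal bundle to the zero-section of a vector bundle $E$ is $E$ itself (pulled back along the zero-section, which is a section so the pullback is canonically $E$), and equivariance is automatic because every morphism in sight is $G$-equivariant. With that in hand, the proof is two lines: $f^* \circ p^* = (pf)^* = \id$ by functoriality, and $f^* \circ f_* = c^G_1(N_{X/L}) = c^G_1(L)$ by Proposition~\ref{prop:SIF}.

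\begin{proof}
Since $p : L \to X$ is a $G$-equivariant line bundle, it is a smooth $G$-equivariant morphism and the zero-section $f : X \to L$ is a regular $G$-equivariant closed embedding of pure codimension one with $p \circ f = \id_X$. By the contravariant functoriality of $\Omega^G_*$ for smooth (and l.c.i.) morphisms in $\sV_G$ ({\sl cf.} Theorem~\ref{thm:Basic}$(i)$), we get $f^* \circ p^* = (p \circ f)^* = {\rm Id}$. For the second identity, the equivariant normal bundle $N_{X/L}$ of the zero-section is canonically $G$-equivariantly isomorphic to $f^*L = L$. Applying the self-intersection formula (Proposition~\ref{prop:SIF}) to $f$ therefore gives, for every $y \in \Omega^G_*(X)$,
\[
f^* \circ f_*(y) = c^G_1(N_{X/L})(y) = c^G_1(L)(y),
\]
which completes the proof.
\end{proof}
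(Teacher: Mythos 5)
Your proof is correct and follows essentially the same route as the paper: the second identity is obtained exactly as in the paper by applying Proposition~\ref{prop:SIF} together with the standard identification $N_{X/L}\cong L$, and the first identity is a triviality given Theorem~\ref{thm:Basic}. The only cosmetic difference is that the paper cites homotopy invariance (Theorem~\ref{thm:Basic}$(iii)$) for the first identity while you appeal directly to contravariant functoriality of l.c.i.\ pullbacks (Theorem~\ref{thm:Basic}$(i)$) via $p\circ f=\mathrm{id}_X$; both are standard and equally valid.
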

\begin{proof} Since $p$ is an equivariant line bundle, the homotopy invariance
property of Theorem~\ref{thm:Basic} implies the first isomorphism.
The second isomorphism is a direct consequence of Proposition~\ref{prop:SIF}.
\end{proof}
\begin{lem}\label{lem:surj*}
Let $G$ be a connected linear algebraic group acting on a $k$-variety $X$.
Then all irreducible components of $X$ are $G$-invariant and the map
\[
{\underset{1 \le i \le r}\bigoplus} \Omega^G_*(X_i) \to
\Omega^G_*(X)
\]
is surjective, where $\{X_1, \cdots, X_r\}$ is the set of irreducible
components of $X$.
\end{lem}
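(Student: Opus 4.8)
The plan is to prove the two assertions in turn, starting with the fact that every irreducible component of $X$ is $G$-invariant. First I would note that since $G$ is connected, the action morphism $G \times X \to X$ restricted to $G \times X_i$ has image an irreducible subset containing $X_i$ (namely the image of $\{1\} \times X_i$), hence contained in some irreducible component of $X$; by symmetry and maximality this forces $g \cdot X_i = X_i$ for all $k$-points $g$, and then a standard argument (spreading out over the connected group, or passing to the generic point) upgrades this to $G$-invariance of $X_i$ as a closed subscheme. This is the ``easy'' half and I expect it to be routine.

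For the surjectivity statement, the natural idea is to reduce to the corresponding non-equivariant fact via the mixed-quotient construction and Theorem~\ref{thm:NO-Niveu}. Fix a sequence ${\{(V_j, U_j)\}}_{j \ge 0}$ of good pairs as in Theorem~\ref{thm:NO-Niveu}, so that $\Omega^G_*(X) \cong \varprojlim_j \Omega_{*+l_j-g}(X \stackrel{G}{\times} U_j)$ and similarly for each $X_i$. Since $G$ acts freely on $U_j$, the mixed space $X \stackrel{G}{\times} U_j$ is a genuine quotient scheme with irreducible components exactly $X_i \stackrel{G}{\times} U_j$ (here $G$-invariance of the $X_i$, just proven, is used so that each $X_i \stackrel{G}{\times} U_j$ makes sense). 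The non-equivariant localization sequence of \cite{LM}, applied iteratively to the stratification of $X \stackrel{G}{\times} U_j$ by the components and their intersections — or more simply the elementary fact that for any scheme $Z$ with irreducible components $Z_1,\dots,Z_r$ the map $\bigoplus_i \Omega_*(Z_i) \to \Omega_*(Z)$ is surjective (immediate from the cycle-theoretic description: every generator $[Y \to Z]$ with $Y$ irreducible factors through some $Z_i$) — gives surjectivity of $\bigoplus_i \Omega_{*+l_j-g}(X_i \stackrel{G}{\times} U_j) \to \Omega_{*+l_j-g}(X \stackrel{G}{\times} U_j)$ for each $j$.

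The remaining point, and the one I expect to be the main obstacle, is passing the surjectivity through the inverse limit: a surjection of inverse systems of abelian groups need not induce a surjection on limits unless a Mittag--Leffler or exactness condition holds. Here I would argue that the transition maps in each system $\{\Omega_{*+l_j-g}(X_i \stackrel{G}{\times} U_j)\}_j$ (and in the system for $X$) are the relevant pull-back/quotient maps appearing in the definition of equivariant cobordism, and that by \cite[Lemma~4.3]{Krishna4} (already invoked in the proof of Proposition~\ref{prop:SIF}) and the no-niveau presentation, these systems have surjective transition maps, hence satisfy the Mittag--Leffler condition; therefore $\varprojlim$ of the surjections $\bigoplus_i \Omega_{*+l_j-g}(X_i \stackrel{G}{\times} U_j) \surj \Omega_{*+l_j-g}(X \stackrel{G}{\times} U_j)$ remains surjective, and since a finite direct sum commutes with inverse limits this yields exactly $\bigoplus_i \Omega^G_*(X_i) \surj \Omega^G_*(X)$. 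Alternatively, one can bypass the $\varprojlim$ subtlety entirely by working at each finite level ${\Omega^G_*(-)}_j$ and using that $\Omega^G_*(X)_{j'} \surj \Omega^G_*(X)_j$ together with the five-lemma-style comparison; but the Mittag--Leffler route is the cleanest.
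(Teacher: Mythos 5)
Your first paragraph (connected group implies $G$-invariance of the irreducible components) is fine and matches the paper, which treats it as immediate.

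The surjectivity argument, however, has a genuine gap at the inverse-limit step, and you correctly flagged this as "the main obstacle" but then resolved it incorrectly. You argue that since the source systems $\{\Omega_{*+l_j-g}(X_i \stackrel{G}{\times} U_j)\}_j$ have surjective transition maps, they satisfy Mittag--Leffler, and hence the level-wise surjections pass to a surjection of inverse limits. This is not a valid inference: what one needs is Mittag--Leffler for the \emph{kernel} system $K_j = \ker\bigl(\bigoplus_i \Omega(X_i \stackrel{G}{\times} U_j) \to \Omega(X \stackrel{G}{\times} U_j)\bigr)$, not for the source. A surjection of inverse systems with surjective transition maps on both sides need not induce a surjection on limits. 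Concretely, take $A_j = \Z$ with identity transition maps and $B_j = \Z/2^j$ with the natural quotient transitions; the reduction maps $A_j \to B_j$ are surjective, the squares commute, both systems have surjective transition maps, yet $\varprojlim A_j = \Z \hookrightarrow \varprojlim B_j = \Z_2$ is not surjective. Your fallback ("work at each finite level $\Omega^G_*(-)_j$ and use a five-lemma-style comparison") is not fleshed out enough to tell whether it sidesteps the same issue; as written it does not.

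The paper avoids the inverse limit entirely. The equivariant localization sequence is already established as Theorem~\ref{thm:Basic}(ii), so one can argue directly at the equivariant level: induct on the number of irreducible components, write $X = X_1 \cup X_2$ with $Y = X_1 \cap X_2$, and compare the two localization sequences
\[
\Omega^G_*(Y) \to \Omega^G_*(X_1) \to \Omega^G_*(X_1 \setminus Y) \to 0,
\qquad
\Omega^G_*(X_2) \to \Omega^G_*(X) \to \Omega^G_*(X \setminus X_2) \to 0,
\]
noting $X_1 \setminus Y = X \setminus X_2$; a diagram chase then gives surjectivity of $\Omega^G_*(X_1) \oplus \Omega^G_*(X_2) \to \Omega^G_*(X)$. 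By invoking the already-proven equivariant localization sequence rather than unwinding back to the non-equivariant mixed quotients, the paper never has to pass a surjection through $\varprojlim$, which is exactly where your argument breaks. You should replace the Mittag--Leffler step with this direct diagram-chase argument (or prove Mittag--Leffler for the kernel system, which does not appear straightforward).
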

\begin{proof}
Since $G$ is connected and hence irreducible, it is clear that all components 
of $X$ are $G$-invariant. We now prove the surjectivity assertion.
By an induction on the number of irreducible components, it suffices to
consider the case when
$X = X_1 \cup X_2$, where each $X_i$ is a $G$-invariant closed 
(not necessarily irreducible) subscheme. Set $Y = X_1 \cap X_2$.
It is then clear that $Y$ is a $G$-invariant closed subscheme of $X$.
By Theorem~\ref{thm:Basic} $(ii)$, we get a commutative diagram
\begin{equation}\label{eqn:FORG}
\xymatrix@C.7pc{
\Omega^G_*(Y) \ar[r] \ar[d] & \Omega^G_*(X_1)  \ar[r] \ar[d] & 
\Omega^G_*(X_1 - Y)  \ar[r] \ar@{=}[d] &  0 \\
\Omega^G_*(X_2) \ar[r] & \Omega^G_*(X) \ar[r] & \Omega^G_*(X - X_2) \ar[r] &
0}
\end{equation}
with exact rows.
It follows from the diagram chase that the map $\Omega^G_*(X_1) \oplus
\Omega^G_*(X_2) \to \Omega^G_*(X)$ is surjective.
\end{proof}

\begin{thm}\label{thm:FF*}
Let $T$ be a split torus acting on a $k$-variety $X$. Then the forgetful
map $r^T_X$ induces an isomorphism
\[
\ov{r}^T_X : \Omega^T_*(X) \otimes_{S} \bL \xrightarrow{\cong} \Omega_*(X).
\]
If $X$ is smooth, this is an $\bL$-algebra isomorphism.
\end{thm}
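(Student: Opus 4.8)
The plan is to prove the theorem by an induction on the dimension of $X$, using the localization sequence to reduce to the case of a smooth variety, and then reducing the smooth case to an explicit computation over an affine space on which $T$ acts linearly. First I would set up the map: the forgetful map $r^T_X$ is $S$-linear where $S$ acts on $\Omega_*(X)$ through the augmentation $S \to S/I_T \cong \bL$, so it factors through $\Omega^T_*(X)\otimes_S \bL$, giving the map $\ov r^T_X$. The ring statement for smooth $X$ is then automatic once we know $\ov r^T_X$ is an isomorphism, since the forgetful map is a ring homomorphism and tensoring preserves products. So the whole content is that $\ov r^T_X$ is an isomorphism of $\bL$-modules.

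For surjectivity, I would argue that every ordinary cobordism cycle $[Y \xrightarrow{f} X, L_1,\dots,L_r]$ on $X$ can be lifted. By Lemma~\ref{lem:surj*} and the localization sequence (Theorem~\ref{thm:Basic}$(ii)$) we may work one irreducible component at a time and shrink $X$; using resolution of singularities and the generators of Levine--Morel, it suffices to lift classes $[Y \xrightarrow{f} X]$ with $Y$ smooth. Here the key trick — the one the excerpt's ``simpler technique'' remark is pointing at — is to replace the $T$-action on $X$ by a free one at the cost of passing to an open subset of $X \times V$ for a representation $V$: concretely, for a good pair $(V_j, U_j)$ the mixed space $X \times^T U_j$ has cobordism groups computing $\Omega^T_*(X)_j$, while over $\Spec k$ the same construction gives $S_j = \Omega^*(BT)_j$; the niveau filtration then lets one control the error terms. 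Surjectivity of $\Omega^T_*(X) \to \Omega_*(X)$ modulo $I_T$ reduces to the statement that $\Omega_*(X \times^T U_j) \to \Omega_*(X)$ (restriction to a fibre of the affine bundle $X \times^T U_j \to BT_j$, roughly) is surjective with $\bL$-coefficients, which follows from homotopy invariance together with the localization sequence applied to $U_j \subset V_j$.

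For injectivity I would use the niveau filtration \eqref{eqn:niveau1} on both sides and induct on dimension. The base case handles $X$ a point (or $X$ smooth of dimension $0$), where $\Omega^T_*(\Spec k) = S$ and $S \otimes_S \bL = \bL = \Omega_*(\Spec k)$. For the inductive step, choose a $T$-invariant open $U \subseteq X$ that is smooth (this exists by generic smoothness in characteristic zero) with closed complement $Z$ of smaller dimension; the localization sequences for $\Omega^T$ and $\Omega$ are compatible with $r^T$, and $-\otimes_S \bL$ is right exact, so a diagram chase combined with the inductive hypothesis on $Z$ reduces injectivity to the case of smooth $X$. For smooth $X$ one can further use the self-intersection formula (Proposition~\ref{prop:SIF}) and Corollary~\ref{cor:line-bundle} to express things in terms of Chern classes, but the cleanest route is: for $X$ smooth with $T$-action, $\Omega^T_*(X)$ is computed by $\varprojlim_j \Omega_*(X \times^T U_j)$ (Theorem~\ref{thm:NO-Niveu}), and $X\times^T U_j \to X \times^T (U_j/\text{pt})$ is a Zariski-locally trivial affine bundle; tensoring down to $\bL$ kills the ``$BT$-direction'' and one is left with $\Omega_*(X)$.

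The main obstacle I anticipate is the injectivity in the smooth case, i.e.\ controlling the inverse limit: one must show that an element of $\Omega^T_*(X)$ that dies in $\Omega_*(X)$ already lies in $I_T \cdot \Omega^T_*(X)$, and this requires understanding the kernel of each $\Omega_*(X \times^T U_j) \to \Omega_*(X)$ well enough to see that it is generated by the images of the $c^T_1(L_{\chi_i})$ (the $t_i$ of \eqref{eqn:CBT*}) \emph{uniformly in $j$}, plus a Mittag-Leffler/degreewise-finiteness argument to pass to the limit (the groups $\Omega^T_i(X)_j$ stabilize in each fixed total degree by the niveau filtration, so $\varprojlim^1$ vanishes in each degree and the limit behaves well). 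A secondary technical point is that $\Omega^T_*(X)$ need not be finitely generated over $S$, so one cannot simply invoke Nakayama; the finiteness has to be recovered degree-by-degree from the stabilization built into Theorem~\ref{thm:NO-Niveu} and the niveau filtration. Everything else — compatibility of localization with $r^T$, homotopy invariance, the ring structure — is formal given the results recalled in Section~\ref{section:AC}.
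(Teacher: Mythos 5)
Your plan has a genuine gap at the crucial step, and it misses the mechanism the paper actually uses. The paper's proof is a one-shot argument with no dimension induction and no need to control an inverse limit: it lets $T$ act linearly on $\A^n$ so that $T \cong \A^n \setminus H$ (with $H$ the union of coordinate hyperplanes), applies the equivariant localization sequence to $X\times H \subset X\times\A^n$, and identifies the third term $\Omega^T_*(X\times T)$ with $\Omega_*(X)$ by Morita (Proposition~\ref{prop:Morita}). Homotopy invariance and Corollary~\ref{cor:line-bundle} then identify the first map with $\sum_i c^T_1(L_{\chi_i})$, i.e.\ with multiplication by the $t_i$, so the whole sequence is exactly the tail $K_1 \to K_0 \to \Omega_*(X) \to 0$ of the Koszul complex on $(t_1,\dots,t_n)$, tensored over $S$ with $\Omega^T_*(X)$. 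Since $(t_1,\dots,t_n)$ is a regular sequence in $S$ (Lemma~\ref{lem:GPSR}), the Koszul complex resolves $\bL$ and one reads off $\bL\otimes_S\Omega^T_*(X)\cong\Omega_*(X)$ at once --- surjectivity \emph{and} injectivity together, for arbitrary (not necessarily smooth) $X$. The ``simpler technique'' the introduction alludes to is this Koszul-complex trick, not the niveau-filtration control you guessed.

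Your route has two problems. First, the reduction of injectivity to the smooth case via localization does not go through: the localization sequences $\Omega^T_*(Z)\to\Omega^T_*(X)\to\Omega^T_*(U)\to 0$ and their non-equivariant counterparts are only right exact, and in particular $\Omega_*(Z)\to\Omega_*(X)$ is not injective in general, so the five-lemma-style diagram chase you invoke for injectivity has no leftward leg to stand on. Second, for smooth $X$ you explicitly flag the key point --- that an element of $\Omega^T_*(X)$ dying in $\Omega_*(X)$ must lie in $I_T\cdot\Omega^T_*(X)$, uniformly in $j$ --- as an ``anticipated obstacle'' rather than resolving it; this is precisely the content of the theorem, and neither the Mittag--Leffler remark nor the affine-bundle heuristic (note that $X\times^T U_j\to U_j/T$ is a fiber bundle with fiber $X$, not an affine bundle, unless $X$ itself is affine space) closes it. So the proposal correctly identifies where the difficulty lies but does not supply the idea that makes it go away; the paper sidesteps all of this by producing the Koszul resolution from geometry.
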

\begin{proof}
We have already seen that $r^T_X$ is an $\bL$-algebra homomorphism if $X$ is
smooth. So we only need to show that it descends to the desired isomorphism
for any $X \in \sV_T$.

Let $n$ be the rank of $T$ and let $\{\chi_1, \cdots , \chi_n\}$ be a chosen
basis of the character group $M$ of $T$. We denote the coordinates of the affine
space $\A^n_k$ by $x = (x_1, \cdots , x_n)$ and define a linear action of $T$ on
$\A^n$ by 
\begin{equation}\label{eqn:action}
t \cdot x = y = (y_1, \cdots , y_n), \ {\rm where} \ y_i = \chi_i(t)x_i \
\forall \ i.
\end{equation}
Let $H_i \subset \A^n$ be the hyperplane $\{x_i = 0\}$ for $1 \le i \le n$.
Then each $H_i$ is $T$-invariant and 
\begin{equation}\label{eqn:action1}
T \cong \A^n - \left(\stackrel{n}{\underset{i=1}\cup} H_i\right) = \A^n - H
\ ({\rm say}).
\end{equation}
Note that since $\{\chi_1, \cdots , \chi_n\}$ is a basis and since 
the action of $\chi_i(t)$ is multiplication by $t_i$ if $t =(t_1, \cdots , t_n)$
is a coordinate of $T$, we see that under the identification in 
~\eqref{eqn:action1}, the restriction of the above action of $T$ on $\A^n$
to the open subset $T$ is by simply the left (and hence right) 
multiplication.

The localization sequence of Theorem~\ref{thm:Basic} $(ii)$ gives an exact 
sequence
\begin{equation}\label{eqn:action2}
\Omega^T_{*}(X \times H) \to \Omega^T_*(X \times \A^n) \to
\Omega^T_*(X \times T) \to 0,
\end{equation}
where $T$ acts on $X \times \A^n$ via the diagonal action. Using 
Proposition~\ref{prop:Morita} and Lemma~\ref{lem:surj*}, this reduces
to an exact sequence 
\begin{equation}\label{eqn:action3}
{\underset{1 \le i \le n}\bigoplus} \Omega^T_{*}(X \times H_i) 
\xrightarrow{\sum j^i_*}
\Omega^T_*(X \times \A^n) \to \Omega_*(X) \to 0,
\end{equation}
where $j^i: H_i \to \A^n$ is the inclusion.
By the equivariant homotopy invariance, we can identify $\Omega^T_*(X \times
H_i)$ by $\Omega^T_*(X)$ via the projection 
\begin{equation}\label{eqn:action4}
\xymatrix{ 
H_i \ar[r]^{j^i} \ar[dr]_{p_i} & \A^n \ar[d]^{p} \\
& {\rm Spec}(k)}
\end{equation}
and similarly identify $\Omega^T_*(X \times \A^n)$ by $\Omega^T_*(X)$ via
$p^*$. Since $T$ acts on $\A^n$ coordinate-wise, we can also equivariantly
identify $\A^n$ as $H_i \times \A^1$ and then by the 
homotopy invariance, we have isomorphisms
$\Omega^T_*(X \times \A^n) \cong \Omega^T_*(X \times \A^1 \times H_i)
\cong  \Omega^T_*(X \times \A^1)$. 

Under the identifications
$\Omega^T_*(X) \stackrel{\cong}{\underset{p^*_i}\to}
\Omega^T_*(X \times H_i)$ and $\Omega^T_*(X \times \A^1) \cong
 \Omega^T_*(X \times \A^n)$,
the map $\Omega^T_{*}(X \times H_i) \xrightarrow{j^i_*}
\Omega^T_*(X \times \A^n)$ is the composite
\begin{equation}\label{eqn:action5} 
\Omega^T_{*}(X) \xrightarrow{j_*} \Omega^T_*(X \times \A^1)
\xrightarrow{j^*} \Omega^T_{*}(X),
\end{equation}
where $j : {\rm Spec}(k) \to \A^1$ is the zero-section embedding. 
In particular, we can apply Corollary~\ref{cor:line-bundle} to
identify the map $j^i_*$ as the map $\Omega^T_{*}(X) 
\xrightarrow{c^T_1(L_{\chi_i})} \Omega^T_{*}(X)$.  
Hence, the first map of ~\eqref{eqn:action3} is identified as the map
\begin{equation}\label{eqn:action6}
{\left(\Omega^T_{*}(X)\right)}^{\oplus n} 
\xrightarrow{\sum c^T_1(L_{\chi_i})} \Omega^T_*(X)
\end{equation}
\[
(a_1, \cdots , a_n) \mapsto \stackrel{n}{\underset{i=1}\sum}
c^T_1(L_{\chi_i}) \cdot a_i.
\]
Since each $L_{\chi_i}$ is a $T$-equivariant line bundle on ${\rm Spec}(k)$,
the above map is same as 
\[
S^{\oplus n} \otimes_S \Omega^T_*(X) \xrightarrow{\Phi \otimes id} S \otimes_S 
\Omega^T_*(X),
\]
where $S = \Omega^*_T(k) = \bL[[x_1, \cdots , x_n]]$ and $\Phi :S^{\oplus n}
\to S$ is given as in ~\eqref{eqn:action6}. 

Now, the map $c^T_1(L_{\chi_i})$ on $S$
is given by the multiplication by $x_i$ for $1 \le i \le n$. In particular,
we see that $[S^{\oplus n} \xrightarrow{\Phi} S]$ is the complex 
$(K_1 \xrightarrow{\Phi} K_0)$, where 
\[
K_{\bullet} = K_{\bullet}(S \xrightarrow{x_i}
S) = (K_n \to \cdots \to K_1 \xrightarrow{\Phi} K_0)
\]
is the Koszul complex associated to the sequence $(x_1, \cdots , x_n)$ in $S$. 
Hence, by comparing this with
~\eqref{eqn:action3} and ~\eqref{eqn:action6}, we conclude that 
~\eqref{eqn:action3} is the exact sequence 
\begin{equation}\label{eqn:action7}
K_1 \otimes_S \Omega^T_*(X) \to K_0 \otimes_S \Omega^T_*(X) \to \Omega_*(X)
\to 0.
\end{equation}
On the other hand, $(x_1, \cdots , x_n)$ is a regular sequence in $S$ by
Lemma~\ref{lem:GPSR}. This in turn implies using 
\cite[Theorem~16.5]{Matsumura} that $K^{\bullet}$ is a resolution of $\bL$ as
$S$-module. In particular, the sequence
\[
K_1 \otimes_S \Omega^T_*(X) \to K_0 \otimes_S \Omega^T_*(X) \to 
\bL \otimes_S \Omega^T_*(X) \to 0
\]
is exact. The proof of the theorem follows by comparing this exact sequence
with ~\eqref{eqn:action7}.
\end{proof}

\begin{remk}\label{remk:GFF*}
We expect the assertion of Theorem~\ref{thm:FF*} to hold for all
linear algebraic groups with rational coefficients. 
Although, we are unable to prove this generalization,
we shall show a weaker version later in this paper.
We shall also show the isomorphism $\Omega^G_*(X) \otimes_{S(G)} \bL
\xrightarrow{\cong} \Omega_*(X)$ when $X$ is the flag variety $G/B$
({\sl cf.} Theorem~\ref{thm:flag-V}). 
\end{remk}

\section{Structure theorems for torus action}
\label{section:Local-torus}
In this section, we prove some structure theorems for the equivariant 
cobordism of smooth projective varieties with torus action.
To state our first result of this kind, recall that for homomorphisms 
$A_i \xrightarrow{f_i} B, \ i= 1, 2$ of abelian groups, 
$A_1 {\underset{B}\times} A_2$ denotes 
the fiber product $\{(a_1, a_2)| f_1(a_1) = f_2(a_2)\}$.
\begin{prop}\label{prop:split-cob}
Let $T$ be a split torus and let $Y \inj X$ be a $T$-equivariant closed 
embedding of smooth varieties of codimension $d \ge 0$. Assume that 
$c^G_d\left(N_{Y/X}\right)$ is a non-zero divisor in the cobordism ring 
$\Omega^*_T\left(Y\right)$. Let $Y \stackrel{i}{\inj} X$ and
$U \stackrel{j}{\inj} X$ be the inclusion maps, where $U$ is the complement
of $Y$ in $X$. Then: \\
$(i)$ The localization sequence 
\[
0 \to \Omega^*_T(Y) \xrightarrow{i_*} \Omega^*_T(X) \xrightarrow{j^*}
\Omega^*_T(U) \to 0
\]
is exact. \\
$(ii)$ The restriction ring homomorphisms
\[
\Omega^*_T(X) \stackrel{(i^*, j^*)}{\longrightarrow} 
\Omega^*_T(Y) \times \Omega^*_T(U)
\]
give an isomorphism of rings
\[
\Omega^*_T(X) \xrightarrow{\cong} \Omega^*_T(Y) {\underset {\wt{\Omega^*_T(Y)}}
{\times}} \Omega^*_T(U),
\]
where $\wt{\Omega^*_T(Y)} = 
{\Omega^*_T(Y)}/{\left(c^T_d\left(N_{Y/X}\right)\right)}$,
and the maps 
\[
\Omega^*_T(Y) \to \wt{\Omega^*_T(Y)}, \ \Omega^*_T(U) \to \wt{\Omega^*_T(Y)}
\]
are respectively, the natural surjection and the map 
\[
\Omega^*_T(U) = {\frac{\Omega^*_T(X)}{i_*\left(\Omega^*_T(Y)\right)}}
\xrightarrow{i^*} {\frac{\Omega^*_T(Y)}{c^T_d\left(N_{Y/X}\right)}}  
= \wt{\Omega^*_T(Y)},
\]
which is well-defined by Proposition~\ref{prop:SIF}.
\end{prop}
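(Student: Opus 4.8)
The plan is to deduce part (i) from the self-intersection formula and then build the ring isomorphism in part (ii) by a direct diagram chase. First I would establish the injectivity of $i_*$. By Proposition~\ref{prop:SIF}, the composite $i^* \circ i_* : \Omega^*_T(Y) \to \Omega^*_T(Y)$ is multiplication by $c^T_d(N_{Y/X})$, which is a non-zero divisor by hypothesis; hence $i_*$ is injective. Combined with the localization sequence of Theorem~\ref{thm:Basic}(ii) (written cohomologically, since $X$ and $U$ are smooth and $Y$ is pure-dimensional in $X$), this gives the short exact sequence in (i). The map $j^*$ is a ring homomorphism because it is a smooth (open) pull-back, and $i_*$ is a ring homomorphism onto the ideal $i_*(\Omega^*_T(Y)) \subset \Omega^*_T(X)$ only in the sense of a non-unital ring; I would simply note that $i_*(\Omega^*_T(Y))$ is an ideal by the projection formula (Theorem~\ref{thm:Basic}(vii)), which is what makes the quotient ring $\Omega^*_T(U)$ in the statement's last display meaningful.

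For part (ii), I would first check that the map $\Omega^*_T(U) \to \wt{\Omega^*_T(Y)}$ described in the statement is well-defined: $i^* : \Omega^*_T(X) \to \Omega^*_T(Y)$ kills $i_*(\Omega^*_T(Y))$ modulo $(c^T_d(N_{Y/X}))$ precisely because $i^* i_* = c^T_d(N_{Y/X}) \cdot (-)$, so $i^*$ descends to a ring homomorphism $\Omega^*_T(X)/i_*(\Omega^*_T(Y)) = \Omega^*_T(U) \to \Omega^*_T(Y)/(c^T_d(N_{Y/X})) = \wt{\Omega^*_T(Y)}$, and one checks it agrees with $j^*$ followed by nothing further—i.e.\ the square commutes. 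Next, the pair $(i^*, j^*) : \Omega^*_T(X) \to \Omega^*_T(Y) \times \Omega^*_T(U)$ manifestly lands in the fiber product $\Omega^*_T(Y) \times_{\wt{\Omega^*_T(Y)}} \Omega^*_T(U)$ because the surjection $\Omega^*_T(Y) \twoheadrightarrow \wt{\Omega^*_T(Y)}$ composed with $i^*$ equals the map $\Omega^*_T(U) \to \wt{\Omega^*_T(Y)}$ composed with $j^*$, by the commutativity just verified. Each coordinate is a ring homomorphism, so the induced map to the fiber product is one.

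It remains to show the map to the fiber product is bijective. For injectivity: if $i^*(\xi) = 0$ and $j^*(\xi) = 0$, then by exactness of (i) we have $\xi = i_*(\eta)$ for some $\eta \in \Omega^*_T(Y)$, and then $0 = i^*(\xi) = i^* i_*(\eta) = c^T_d(N_{Y/X}) \cdot \eta$, so $\eta = 0$ since $c^T_d(N_{Y/X})$ is a non-zero divisor; hence $\xi = 0$. For surjectivity: given $(\eta, u)$ in the fiber product, lift $u$ to some $\xi_0 \in \Omega^*_T(X)$ via the surjection $j^*$; then $i^*(\xi_0)$ and $\eta$ have the same image in $\wt{\Omega^*_T(Y)}$, so $\eta - i^*(\xi_0) = c^T_d(N_{Y/X}) \cdot \eta'$ for some $\eta' \in \Omega^*_T(Y)$. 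Using the self-intersection formula in the form $i^* i_*(\eta') = c^T_d(N_{Y/X}) \cdot \eta'$, set $\xi = \xi_0 + i_*(\eta')$; then $i^*(\xi) = i^*(\xi_0) + c^T_d(N_{Y/X})\eta' = \eta$ and $j^*(\xi) = j^*(\xi_0) + j^* i_*(\eta') = u$, since $j^* i_* = 0$ by the excision/localization compatibility ($U$ is the complement of $Y$). This proves surjectivity.

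\textbf{Main obstacle.} The genuinely delicate points are bookkeeping rather than deep: making sure the localization sequence of Theorem~\ref{thm:Basic}(ii), which is stated homologically for arbitrary schemes, translates correctly into the cohomologically-graded short exact sequence $0 \to \Omega^*_T(Y) \to \Omega^*_T(X) \to \Omega^*_T(U) \to 0$ with the codimension-$d$ shift absorbed into the grading of $i_*$; and verifying that the various maps really are ring homomorphisms (in particular that $i_*(\Omega^*_T(Y))$ is an ideal, so that the quotient description of $\Omega^*_T(U)$ as a ring makes sense) — this uses the projection formula. Everything else is the standard fiber-product/diagram-chase argument, powered entirely by the self-intersection identity $i^* i_* = c^T_d(N_{Y/X}) \cdot (-)$ from Proposition~\ref{prop:SIF} together with the non-zero-divisor hypothesis.
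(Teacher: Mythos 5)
Your proof is correct and follows essentially the same route as the paper: establish injectivity of $i_*$ via the self-intersection formula and the non-zero-divisor hypothesis, then deduce the fiber-product description. The only difference is that the paper cites \cite[Lemma~4.4]{VV} for part (ii), whereas you unwind that citation into the explicit diagram chase (well-definedness, injectivity from exactness plus the non-zero-divisor hypothesis, surjectivity by lifting and correcting with $i_*(\eta')$); this is a self-contained version of the same argument, and your bookkeeping remarks about grading shifts and $i_*(\Omega^*_T(Y))$ being an ideal via the projection formula are exactly the right points to flag.
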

\begin{proof} Since all the statements are obvious for $d= 0$, we assume
that $d \ge 1$.
In view of Theorem~\ref{thm:Basic} $(ii)$, we only need to show that $i_*$
is injective to prove the first assertion. But this follows from 
Proposition~\ref{prop:SIF} and the assumption that $c^T_d(N_{Y/X})$
is a non-zero divisor in the ring $\Omega^*_T(Y)$. Since $i^*$ and 
$j^*$ are ring homomorphisms, the proof of the second part follows
directly from the first part and \cite[Lemma~4.4]{VV}.
\end{proof} 

\subsection{The motivic cobordism theory}
Before we prove our other results of this section, we recall the theory
of motivic algebraic cobordism $MGL_{*,*}$ introduced by Voevodsky in
\cite{Voevodsky}. This is a bi-graded ring cohomology theory in the category
of smooth schemes over $k$. Levine has recently shown in \cite{Levine1}
that $MGL_{*,*}$ extends uniquely to a bi-graded oriented Borel-Moore homology
theory $MGL'_{*,*}$ on the category of all schemes over $k$. This homology
theory has exterior products, homotopy invariance, localization exact sequence 
and Mayer-Vietoris among other properties ({\sl cf.} [{\sl loc. cit.}, 
Section~3]). Moreover, the universality of Levine-Morel cobordism theory 
implies that there is a unique map 
\[
\vartheta : \Omega_* \to MGL'_{2*, *}
\]
of oriented Borel-Moore homology theories. Our motivation for studying the
motivic cobordism theory in this text comes from the following result of
Levine.
\begin{thm}[\cite{Levine2}]\label{thm:LCOMP}
For any $X \in \sV_k$, the map $\vartheta_X$ is an isomorphism.
\end{thm}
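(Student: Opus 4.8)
The plan is to show that Voevodsky's motivic cobordism, packaged via Levine's extension as the oriented Borel--Moore homology theory $MGL'_{2*,*}$ on $\sV_k$, is a ``free'' theory in the sense of Levine--Morel, so that the canonical morphism $\vartheta$ (which already exists and is unique by the universality of $\Omega_*$) is forced to be an isomorphism. The argument runs by reducing from an arbitrary $X$ to the point, and the only genuinely deep input is the computation of $MGL_{2*,*}(k)$.

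First I would reduce to smooth $X$. Both $\Omega_*$ and $MGL'_{2*,*}$ carry localization exact sequences, are covariant for projective push-forward, and $\vartheta$ is compatible with all of this by construction (it is a morphism of oriented Borel--Moore homology theories). Using resolution of singularities in characteristic zero, a Noetherian induction on $\dim X$ driven by these localization sequences reduces the assertion ``$\vartheta_X$ is an isomorphism'' to the case where $X$ is smooth; here one uses in an essential way that $\Omega_*(X)$ is generated by classes $[Y\to X]$ of projective morphisms with $Y$ smooth, so that both surjectivity and injectivity can be propagated through the d\'evissage, and that $\vartheta$ sends these generators to the corresponding fundamental classes. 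For smooth $X$, $\A^1$-homotopy invariance and the projective bundle formula -- enjoyed by both theories -- together with the Levine--Morel recognition of free oriented theories reduce the problem further to $X=\Spec k$.

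The case $X=\Spec k$ is the heart of the matter: one must show that the natural map $\bL=\Omega_*(\Spec k)\to MGL_{2*,*}(k)$, which is precisely the Lazard homomorphism classifying the formal group law of $MGL$, is an isomorphism. This is the Hopkins--Morel isomorphism $\pi_{2*,*}(MGL)\cong\bL$ (in characteristic zero; the general case is due to Hoyois, with earlier arguments of Hopkins--Morel and Levine available in our setting). Its proof rests on the slice filtration: one identifies the zeroth slice $s_0(MGL)$ with the motivic Eilenberg--MacLane spectrum $H\Z$, computes the higher slices as Tate twists of $H\Z$ with coefficients the graded pieces of $\bL$, and then runs the slice spectral sequence together with a convergence argument. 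I would simply invoke this theorem rather than reprove it; this is the step I expect to be the main obstacle, and essentially the only nonformal one.

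Finally I would propagate the point-level statement. Granting $MGL_{2*,*}(k)\cong\bL$, the cohomological comparison $\Omega^*(X)\xrightarrow{\cong}MGL^{2*,*}(X)$ for $X\in\sV^S_k$ follows formally: both are oriented cohomology theories on smooth schemes which are free, i.e. determined by their coefficient ring and formal group law via Levine--Morel universality and a Conner--Floyd/Panin--Smirnov-type recognition principle, and both have coefficient ring $\bL$ equipped with the universal formal group law. Feeding this back through the localization and d\'evissage of the first step then yields that $\vartheta_X$ is an isomorphism for every $X\in\sV_k$. Apart from the Hopkins--Morel input, every step is a routine consequence of localization, homotopy invariance, the projective bundle formula, and resolution of singularities.
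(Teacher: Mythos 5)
The paper does not prove this theorem; it is imported verbatim from Levine's \emph{Comparison of cobordism theories} \cite{Levine2}, so the question is whether your sketch is a viable reconstruction of Levine's argument. It is not: the d\'evissage you propose has genuine gaps, even though you correctly identify Hopkins--Morel as the one nonformal input.

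The reduction from arbitrary $X$ to smooth $X$ by Noetherian induction on the localization sequences breaks down for injectivity. The localization sequence for $\Omega_*$ is only right exact, $\Omega_*(Z)\to\Omega_*(X)\to\Omega_*(U)\to 0$, whereas $MGL'_{*,*}$ sits in a full long exact sequence in which even and odd degrees are interlaced. Given $\alpha\in\ker\vartheta_X$ with $\vartheta_Z$, $\vartheta_U$ isomorphisms, one can lift $\alpha$ to $\beta\in\Omega_*(Z)$ with $i_*\vartheta_Z(\beta)=0$ in $MGL'_{2*,*}(X)$; but that does not force $\vartheta_Z(\beta)=0$, since $\ker\bigl(i_*:MGL'_{2*,*}(Z)\to MGL'_{2*,*}(X)\bigr)$ is the image of the odd-degree connecting map $MGL'_{2*+1,*}(U)\to MGL'_{2*,*}(Z)$, which the right-exact $\Omega_*$-sequence cannot see. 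So injectivity does not propagate by a na\"ive snake-lemma argument. Worse, the step from smooth $X$ to $\Spec k$ ``by homotopy invariance, PBF, and a Conner--Floyd/Panin--Smirnov recognition principle'' is circular: homotopy invariance and the projective bundle formula pin down a theory only on linear or cellular varieties, not on all smooth $X$; and the recognition principle you invoke (``both theories are free, hence determined by their coefficient ring'') presupposes the conclusion, since freeness of $MGL^{2*,*}$ --- i.e.\ that $\Omega^*\otimes_{\bL}MGL^{2*,*}(k)\to MGL^{2*,*}$ is an isomorphism --- is exactly what the theorem asserts.

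What actually closes Levine's argument is a filtration comparison, not a d\'evissage to the point. The Hopkins--Morel identification of the slices $s_q(MGL)$ as twists of $H\Z\otimes\bL_q$ produces the slice spectral sequence with $E_2$-page built from motivic cohomology with $\bL$-coefficients, converging (in the degrees relevant to $MGL^{2*,*}(X)$) for $X$ smooth over a characteristic-zero field. On the other side, $\Omega^*(X)$ carries a filtration whose associated graded is controlled by $CH^*(X)\otimes_{\Z}\bL$, via the Levine--Morel generalized degree formula and the computation $\Omega^*\otimes_{\bL}\Z\cong CH^*$. The morphism $\vartheta$ is shown to be strictly compatible with these filtrations and an isomorphism on associated graded pieces, and convergence then gives the isomorphism of the theorems. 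Surjectivity alone can indeed be obtained by localization, resolution of singularities and a generalized degree formula for $MGL'$, roughly as you describe; but injectivity needs the spectral-sequence comparison, and your sketch does not contain any mechanism that would supply it.
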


\begin{prop}\label{prop:filter-Gen}
Let $X$ be a $k$-scheme with a filtration by closed subschemes  
\begin{equation}\label{eqn:filtration-Gen}
{\emptyset} = X_{-1} \subset X_0 \subset \cdots \subset X_n = X
\end{equation}
and maps ${\phi}_m : U_m = (X_m \setminus X_{m-1}) \to Z_m$ for $0 \le m \le n$ 
which are all vector bundles. Assume moreover that each $Z_m$ is smooth and 
projective. Let $MGL'$ be the Motivic Borel-Moore cobordism theory. 
Then there is a canonical isomorphism
\[
\stackrel{n}{\underset{m=0}{\bigoplus}} MGL'_{*,*}\left(Z_m\right)
\xrightarrow{\cong} MGL'_{*,*}\left(X\right).
\]
\end{prop}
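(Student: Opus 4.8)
The plan is to prove the statement by induction on the length $n$ of the filtration, using the localization exact sequence for $MGL'$ together with the fact that $Z_m$ is smooth projective so that the Gysin/projective pushforward maps split the sequence.

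First I would set up the induction. For $n=0$ the filtration is $\emptyset = X_{-1}\subset X_0 = X$, and the map $\phi_0 : X_0 \to Z_0$ is a vector bundle over a smooth projective $Z_0$; homotopy invariance of $MGL'$ gives $MGL'_{*,*}(X_0)\xrightarrow{\cong} MGL'_{*,*}(Z_0)$, which is the base case. For the inductive step, write $X' = X_{n-1}$, a closed subscheme of $X$ with open complement $U_n = X\setminus X_{n-1}$, and consider the localization exact sequence
\[
MGL'_{*,*}(X_{n-1}) \xrightarrow{i_*} MGL'_{*,*}(X) \xrightarrow{j^*} MGL'_{*,*}(U_n) \to \cdots
\]
(extended to the left by the connecting maps). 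By the inductive hypothesis applied to the induced filtration $\emptyset\subset X_0\subset\cdots\subset X_{n-1}$, one has $\bigoplus_{m=0}^{n-1} MGL'_{*,*}(Z_m)\xrightarrow{\cong} MGL'_{*,*}(X_{n-1})$, and by homotopy invariance $MGL'_{*,*}(U_n)\xrightarrow{\cong} MGL'_{*,*}(Z_n)$ since $\phi_n$ is a vector bundle. So it suffices to show the localization sequence degenerates into a short exact sequence
\[
0\to MGL'_{*,*}(X_{n-1}) \xrightarrow{i_*} MGL'_{*,*}(X) \xrightarrow{j^*} MGL'_{*,*}(Z_n)\to 0
\]
that is moreover split.

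The key point, and the place where smoothness and projectivity of $Z_n$ are used, is the construction of a splitting of $j^*$. Here I would argue as in the analogous statements for Chow groups and ordinary cobordism: since $Z_n$ is smooth and projective, choose a closed immersion $Z_n \hookrightarrow \P^N$; the composite $U_n \xrightarrow{\phi_n} Z_n \hookrightarrow \P^N$ together with the vector bundle structure lets one build a class, and more systematically one uses that $U_n$, being a vector bundle over a smooth projective variety, admits a pushforward $MGL'_{*,*}(U_n)\cong MGL'_{*,*}(Z_n) \to MGL'_{*,*}(X)$ that is a section of $j^*$. Concretely: pull back cobordism cycles $[W\to Z_n]$ with $W$ smooth projective to $[W\times_{Z_n} U_n \to U_n]$, then take closure in $X$; because $W\times_{Z_n}U_n \to U_n$ is a vector bundle, its total space is smooth, and since $Z_n$ (hence $W$) is projective, the closure in $X$ of this locally closed subscheme together with a projective resolution of singularities supplies a genuine cobordism cycle over $X$ whose restriction to $U_n$ recovers the original class. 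The precise bookkeeping that this descends to a well-defined $MGL'$-homology section — using that in $MGL'$ one can work with the Levine-Morel-type generators and double point relations, or alternatively invoking the oriented Borel-Moore formalism with Gysin maps for the regular embedding $W\times_{Z_n}U_n \hookrightarrow \bar W$ — is the technical heart of the argument. Once the section $s_n$ is constructed, $j^*\circ s_n = \id$ forces $j^*$ to be surjective and the long exact sequence to split as claimed, and then $i_*$ is injective and we get the direct sum decomposition $\bigoplus_{m=0}^{n} MGL'_{*,*}(Z_m) \cong MGL'_{*,*}(X)$, completing the induction.

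I expect the main obstacle to be exactly this construction of the splitting: showing that the naive "take closure" recipe produces a well-defined homomorphism $MGL'_{*,*}(Z_n)\to MGL'_{*,*}(X)$ compatible with relations, rather than just a map on cycles. One clean way around it is to avoid explicit cycles altogether and instead use the Borel-Moore oriented homology structure directly — the composition of the smooth pullback $MGL'_{*,*}(X)\to MGL'_{*,*}(U_n)$ being split by a pushforward requires $U_n\to X$ to factor through something proper over $X$, which it does not in general, so one genuinely needs the projectivity of $Z_n$ to compactify. I would therefore phrase the splitting via: the graph/compactification $\bar\phi_n : \bar U_n \to X\times Z_n$ (or via a projective completion of the vector bundle $\phi_n$, e.g. $\P(\phi_n\oplus\mathcal O)$), projecting to $X$, and use that this completion map is projective and is an isomorphism over $U_n$; pushforward along it composed with the section $Z_n\to \P(\phi_n\oplus\mathcal O)$ and pullback gives the desired $s_n$. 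Verifying $j^*\circ s_n=\id$ is then a base-change computation along the open $U_n$, and the remaining degeneration of the long exact sequence is formal.
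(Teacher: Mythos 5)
Your overall strategy mirrors the paper's: induction on the filtration length, the $MGL'$-localization sequence, and a splitting of $j^*$ via a projective correspondence between $X$ and $Z_n$ obtained by compactifying the graph of $\phi_n$. Your first, cycle-level sketch even identifies resolution of singularities as the tool that turns the possibly-singular closure into an honest smooth cobordism cycle. So the key geometric input is present.

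There is, however, a genuine gap in the ``clean'' Borel--Moore reformulation you settle on. You want to define $s_n$ by pulling back from $Z_n$ to $\bar U_n$ (the closure of the graph in $X\times Z_n$) and then pushing forward along the projective map $\bar U_n\to X$. The pushforward is fine; but the pullback $MGL'_{*,*}(Z_n)\to MGL'_{*,*}(\bar U_n)$ along $\bar U_n\to Z_n$ is not automatically defined, because $\bar U_n$ may be singular and the projection to $Z_n$ need not be smooth, flat, or l.c.i. This is precisely where the paper inserts a resolution of singularities $Y_n\to \bar U_n$: $Y_n$ is smooth, so $MGL'_{*,*}(Y_n)\cong MGL^{*,*}(Y_n)$ and one has a pullback from the smooth $Z_n$ via contravariance of $MGL$ on smooth schemes, while $Y_n\to X$ remains projective. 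You mention resolution only in your rejected cycle-level sketch, not in the version you ultimately propose; you need to carry it over. Relatedly, the alternative you float --- using $\P(\phi_n\oplus\mathcal O)$ ``projecting to $X$'' --- does not work as stated: $\P(\phi_n\oplus\mathcal O)$ is a smooth compactification of $U_n$ over $Z_n$, but it carries no map to $X$ (only its open piece $U_n$ does), so it cannot furnish the projective pushforward by itself. What you need is a single smooth roof admitting a projective map to $X$ and a map to $Z_n$; the resolved closure of the graph provides exactly that, whereas $\P(\phi_n\oplus\mathcal O)$ alone does not. Finally, the identity $j^*\circ s_n=\mathrm{id}$ requires more than a vague appeal to base change: the precise input is the compatibility $j^*\circ (\bar p_n)_* = (p_n)_*\circ (\bar j_n)^*$ for the Cartesian square with $V_n\hookrightarrow Y_n$ over $U_n\hookrightarrow X$ and open horizontal arrows, combined with the fact that $V_n\to U_n$ is an isomorphism; it is worth writing this diagram out.
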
  
\begin{proof}
We prove it by induction on $n$.
For $n = 0$, the map $X = X_0 \xrightarrow{{\phi}_0} Z_0$ is a  
vector bundle over a smooth scheme and hence the lemma follows from the
homotopy invariance of the $MGL$-theory. 
We now assume by induction that $1 \le m \le n$ and 
\begin{equation}\label{eqn:split0}
\stackrel{m-1}{\underset{j=0}{\bigoplus}} MGL'_{*,*}\left(Z_j\right)
\xrightarrow{\cong} MGL'_{*,*}\left(X_{m-1}\right).
\end{equation}
Let $i_{m-1} : X_{m-1} \inj X_m$ and $j_{m} : U_{m} = (X_m \setminus X_{m-1}) \inj 
X_m$ be the closed and open embeddings.
We consider the localization exact sequence for $MGL'$ ({\sl cf.} 
\cite[Page~35]{Levine1}).
\begin{equation}\label{eqn:split*1}
\cdots \to MGL'_{*,*}\left(X_{m-1}\right) \xrightarrow{i_{(m-1)*}}
MGL'_{*,*}\left(X_m\right) \xrightarrow{j^*_m}
MGL'_{*,*}\left(U_m\right) \to \cdots.
\end{equation}
Using ~\eqref{eqn:split0}, it suffices now to construct a canonical splitting 
of the smooth pull-back $j^*_m$ in order to prove the lemma.  

Let $V_m \subset U_m \times Z_m$ be the graph of the projection 
$U_m \xrightarrow{{\phi}_m} Z_m$ and let $\ov{V}_m$ 
denote the closure of $V_m$ in $X_m \times Z_m$.  
Let $Y_m \to \ov{V}_m$ be a resolution of singularities. Since $V_m$ is 
smooth, we see that $V_m \stackrel{\ov{j}_m}{\inj} Y_m$ as an open subset. We 
consider the composite maps
\[
p_m : V_m \inj U_m \times Z_m \to U_m, \ \ 
q_m : V_m \inj U_m \times Z_m \to Z_m \ \ {\rm and} 
\]
\[
{\ov{p}}_m : Y_m \to X_m \times Z_m \to X_m, \ \ 
{\ov{q}}_m : Y_m \to X_m \times Z_m \to Z_m. 
\]
Note that ${\ov{p}}_m$ is a projective  morphism since $Z_m$ is projective.
The map $q_m$ is smooth and $p_m$ is an isomorphism.

We consider the diagram
\begin{equation}\label{eqn:split1}
\xymatrix{
{MGL'_{*,*}\left(Z_m\right)} \ar[r]^{{\ov{q}}^*_m} 
\ar[d]_{{\phi}^*_m}^{\cong} &
{MGL'_{*,*}\left(Y_m\right)} \ar[d]^{{{\ov{p}}_m}_*} \\
{MGL'_{*,*}\left(U_m\right)} & 
{MGL'_{*,*}\left(X_m\right)} \ar[l]^{j^*_m}.}
\end{equation} 
Since each $Z_m$ is smooth by our assumption, the map ${{\phi}^*_m}$ is an 
isomorphism by the homotopy invariance of the $MGL$-theory.
The map ${\ov{q}}^*_m$ is the pull-back $MGL'_{*,*}\left(Z_m\right) \cong
MGL^{*,*}\left(Z_m\right) \to MGL^{*,*}\left(Y_m\right) \cong
MGL'_{*,*}\left(Y_m\right)$ between the Voevodsky's motivic cobordism
of smooth varieties. It suffices to show that
this diagram commutes. For, the map ${{\ov{p}}_m}_* \circ {\ov{q}}^*_m \circ
{{\phi}^*_m}^{-1}$ will then give the desired splitting of the map
$j^*_m$. 

We now consider the following commutative diagram.
\[
\xymatrix{
X_m & U_m \ar[l]_{j_m}& \\
Y_m \ar[u]^{{\ov{p}}_m} \ar[dr]_{{\ov{q}}_m} & V_m \ar[u]_{p_m} \ar[d]^{q_m}
\ar[l]^{{\ov{j}}_m} & U_m \ar[ul]_{id} 
\ar[l]^{({\phi}_m, id)} \ar[dl]^{{\phi}_m} \\
& Z_m. & }
\]
Since the top left square is Cartesian  and $j_m$ is an open immersion, we have
$j^*_m \circ {{\ov{p}}_m}_*  = {p_m}_* \circ {\ov{j}}^*_m$ by the functoriality
property of the pull-back of $MGL'$-theory with respect to an open immersion. 
Now, using the fact that $({\phi}_m, id)$ is an isomorphism, 
we get 
\[
\begin{array}{lllll}
j^*_m \circ {{\ov{p}}_m}_* \circ {\ov{q}}^*_m & = & 
{p_m}_* \circ {\ov{j}}^*_m \circ {\ov{q}}^*_m & = &  
{p_m}_* \circ q^*_m \\
& = & {p_m}_* \circ {({\phi}_m, id)}_* \circ {({\phi}_m, id)}^* \circ
q^*_m & = & {id}_* \circ {\phi}^*_m \\
& = &  {\phi}^*_m. & &  
\end{array} 
\]
This proves the commutativity of ~\eqref{eqn:split1} and hence the proposition.
\end{proof}

\begin{lem}\label{lem:Niv-PB}
Let $Y \xrightarrow{f} X$ be a morphism of smooth $k$-varieties of relative
codimension $d$. Then the map $f^* : \Omega_p(X) \to \Omega_{p-d}(Y)$ preserves
the niveau filtration $(${\sl cf.} ~\eqref{eqn:niveau1}$)$.
\end{lem}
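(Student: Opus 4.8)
The plan is to reduce the statement to two elementary cases by means of the graph of $f$. Since $X$ and $Y$ are smooth, the graph morphism $\gamma_f = (\mathrm{id}_Y, f)\colon Y \to Y\times X$ is a regular closed immersion of codimension $\dim X$, and the projection $q\colon Y\times X \to X$ is smooth of relative dimension $\dim Y$; as $f = q\circ\gamma_f$ and relative codimensions are additive, it suffices to show separately that the smooth pull-back $q^*$ and the l.c.i.\ pull-back $\gamma_f^*$ preserve the niveau filtration. The case of a smooth morphism $g\colon X'\to X$ of relative dimension $e$ is immediate from the description of $g^*$ on cobordism cycles: it sends $[V \xrightarrow{h} X, L_1, \dots, L_r]$ to $[V\times_X X' \to X', \mathrm{pr}^*L_1, \dots, \mathrm{pr}^*L_r]$ with $V\times_X X'$ smooth, and the image of this cycle in $X'$ is $g^{-1}(h(V))$, of dimension at most $\dim h(V) + e$; hence $g^*$ carries $F_j\Omega_*(X)$ into $F_{j+e}\Omega_*(X')$, which is precisely niveau-preservation since $g$ has relative codimension $-e$.

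The content of the lemma thus lies in the case of a regular closed immersion $i\colon Y\hookrightarrow M$ of codimension $c$ of smooth varieties, where one must show $i^*\bigl(F_m\Omega_*(M)\bigr)\subseteq F_{m-c}\Omega_{*-c}(Y)$. First I would write a class $\alpha\in F_m\Omega_*(M)$ as $\iota_{Z*}\beta$ for some closed subscheme $Z\subseteq M$ with $\dim Z\le m$ and $\beta\in\Omega_*(Z)$. By compatibility of the refined l.c.i.\ pull-back along $i$ with projective push-forward (\cite[\S6]{LM}), one has $i^*\alpha = \iota_{Z'*}(i^!\beta)$, where $Z' = Z\times_M Y$ and $i^!\colon\Omega_*(Z)\to\Omega_{*-c}(Z')$ is the refined Gysin homomorphism; so it is enough to prove $i^!\beta\in F_{m-c}\Omega_{*-c}(Z')$. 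Unwinding the construction of $i^!$ by deformation to the normal cone, $i^!\beta = s^*\bigl(\sigma(\beta)\bigr)$, where $\sigma(\beta)$ is a class supported on the normal cone $C_{Z'}Z$, regarded as a closed subscheme of the rank-$c$ bundle $\sN = N_{Y/M}|_{Z'}$ over $Z'$, and $s\colon Z'\hookrightarrow\sN$ is the zero section. Since $\dim C_{Z'}Z = \dim Z\le m$, we get $\sigma(\beta)\in F_m\Omega_*(\sN)$, and the proof is complete once one knows that pull-back along the zero section of a rank-$c$ vector bundle sends $F_m$ into $F_{m-c}$ — equivalently, that the homotopy-invariance isomorphism $p^*\colon\Omega_*(Z')\xrightarrow{\cong}\Omega_*(\sN)$ restricts, with its degree shift, to an isomorphism $F_{m-c}\Omega_*(Z')\xrightarrow{\cong} F_m\Omega_*(\sN)$.

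That last point is where the real work is, and it is the step I expect to be the main obstacle: the naive support estimate only shows that $i^*\alpha$ is carried by $Z\cap Y$, whose dimension can be as large as $\dim Z$ (for instance when $Y\subseteq Z$), which is too coarse by exactly $c$, and the missing drop is recovered only after passing to the normal cone. The inclusion $p^*\bigl(F_{m-c}\Omega_*(Z')\bigr)\subseteq F_m\Omega_*(\sN)$ is the smooth-pull-back estimate of the first paragraph; for the reverse inclusion I would argue Zariski-locally on $Z'$, using the localization sequences of Theorem~\ref{thm:Basic}$(ii)$ and Noetherian induction on the base to reduce to the case of a trivial bundle, then factor the zero section of a trivial rank-$c$ bundle as a composite of $c$ codimension-one zero sections $B\times\A^k\hookrightarrow B\times\A^{k+1}$, and finally settle the case of the zero section of $B\times\A^1$ by a direct computation with cobordism cycles — the relevant inputs being that the conormal bundle of $B\times\{0\}$ in $B\times\A^1$ is trivial (so self-intersections vanish), while a cycle meeting $B\times\{0\}$ properly contributes a divisor and hence drops one gradation. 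Equivalently, one may simply quote from \cite{LM} that Gysin maps along regular immersions and Chern-class operators lower the niveau index by their codimension, respectively their degree. Granting this, everything else is the bookkeeping with the graph factorization and the cycle-level pull-back formulas used above.
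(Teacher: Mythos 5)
Your overall architecture — factor $f$ into a smooth map followed by a regular closed immersion, handle the smooth part by a direct support estimate, and concentrate the difficulty in the closed-immersion case — is sound, and it parallels the paper's (the paper factors $f$ through $\P^n_X$ rather than through the graph $Y\hookrightarrow Y\times X$, but this choice is immaterial). Where you diverge from the paper is in the treatment of the closed immersion, and this is where your proposal has a genuine gap. The paper invokes the moving lemma \cite[Lemma~7.1]{LM1} to put the cobordism cycle $[W\xrightarrow{g}X]$ in transversal position with respect to $f:Y\hookrightarrow X$; after that, $f^*[W\to X]=[W\times_XY\to Y]$ with $W\times_XY$ \emph{smooth of the expected dimension} $\dim W-d$, so the support drop by $d$ is literal and the refined Gysin machinery (deformation to the normal cone, Chern-class corrections, zero-section pullbacks) never enters. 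You instead decline to move the cycle, which forces you to work with the full refined Gysin $i^!$, and the argument then hinges on the claim that $p^*\colon\Omega_*(Z')\to\Omega_*(\sN)$ identifies $F_{m-c}\Omega_*(Z')$ with $F_m\Omega_*(\sN)$. You correctly identify this as the crux, but the two ways you propose to establish it do not hold up. The fallback you offer — ``quote from \cite{LM} that Gysin maps along regular immersions lower the niveau index by their codimension'' — is circular: that is precisely the content of the lemma you are proving. The companion claim that Chern-class operators lower the niveau index by their degree is not correct: $c_i(E)$ commutes with projective push-forward, so for $\beta$ supported on $Z\subset X$ with $\dim Z\le m$ one gets $c_i(E)(\iota_{Z*}\beta)=\iota_{Z*}(c_i(E|_Z)\beta)$, which is again supported on $Z$ and hence still lies only in $F_m$, not in $F_{m-i}$. (This is exactly why, in the non-transversal case $V\subseteq s(Z')$, the self-intersection term $c_c(N)\cap(-)$ produces no dimension drop on its own.) Your first sketch — localize to trivialize the bundle, induct on codimension-one zero sections, and argue by hand on $B\times\A^1$ — is a plausible program, but it itself requires the compatibility of the niveau filtration with the localization sequence (which is \cite[Theorem~3.4]{Krishna4}, a result the paper uses elsewhere but not here) together with a careful analysis of what $s^*$ does to a class whose support is \emph{not} transverse to the zero section; as written, this step is an outline of a substantial argument, not a proof. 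In short: your route is not wrong in spirit, but it converts the lemma into an equally hard statement about the interaction of homotopy invariance with the niveau filtration, whereas the paper's use of the moving lemma dissolves the difficulty at the outset.
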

\begin{proof}
We need to show that $f^*\left(F_q\Omega_p(X)\right) \subset 
F_{q-d}\Omega_{p-d}(Y)$. We can factor $f$ as $Y \xrightarrow{i} U 
\xrightarrow{j} \P^n_X \xrightarrow{\pi} X$, where $i$ and $j$ are respectively
closed and open immersions, and $\pi$ is the usual projection.
One has then $f^* = i^* \circ j^* \circ {\pi}^*$ ({\sl cf.} 
\cite[5.1.3]{LM}). Since $\pi$ is the smooth projection and $j$ is an
open immersion, it is clear from the definition of the cobordism 
({\sl cf.} \cite[Theorem~2.3]{Krishna}) that 
${j^* \circ {\pi}^*} \left(F_q\Omega_p(X)\right) \subset 
F_{q+n}\Omega_{p+n}(U)$. Since $d = {\rm codim}_U(Y) - n$, we need to prove
the lemma when $f$ is a closed immersion. 

So let $\alpha = [W \xrightarrow{g} X]$ be a cobordism cycle where $W$ is an
irreducible and smooth variety and $g$ is projective. 
By \cite[Lemma~7.1]{LM1}, we can assume that $g$ is transverse to $f$.
In particular, $W ' = W \times_X Y$ is smooth and there is a Cartesian
square 
\begin{equation}\label{eqn:Niv-PB1}
\xymatrix@C.7pc{
W' \ar[r]^{f'} \ar[d]_{g'} & W \ar[d]^{g} \\
Y \ar[r]_{f} & X}
\end{equation}
in $\sV^S_k$, where the horizontal arrows are closed immersions and the
vertical arrows are projective. We then have 
\[
f^*(\alpha) =  f^* \circ g_*({\rm Id}_W) =  g'_* \circ {f'}^*({\rm Id}_W) 
= g'_*({\rm Id}_{W'}),  
\]
where the second equality follows from the transversality condition 
({\sl cf.} \cite[5.1.3]{LM}). Since $f$ and $f'$ have same codimension, the
lemma follows.
\end{proof}

\subsection{Equivariant cobordism of filtrable varieties}
\label{subsection:Filtrable}
We recall from \cite[Section~3]{Brion2} that a $k$-variety $X$ with an action 
of a split torus $T$ is called {\sl filtrable} if the fixed point locus $X^T$ 
is smooth and projective, 
and there is a numbering $X^T = \stackrel{n}{\underset{m=0}{\coprod}}
Z_m$ of the connected components of the fixed point locus, a 
filtration of $X$ by $T$-invariant closed subschemes
\begin{equation}\label{eqn:filtration-BB}
{\emptyset} = X_{-1} \subset X_0 \subset \cdots \subset X_n = X
\end{equation}
and maps ${\phi}_m : U_m = (X_m \setminus X_{m-1}) \to Z_m$ for $0 \le m \le n$ 
which are all $T$-equivariant vector bundles.  
The following celebrated theorem of Bialynicki-Birula \cite{BB}
(generalized to the case of non-algebraically closed fields by Hesselink
\cite{Hessel}) will be crucial for understanding the
equivariant cobordism of smooth projective varieties.
\begin{thm}[Bialynicki-Birula, Hesselink]\label{thm:BBH}
Let $X$ be a smooth projective variety with an action of $T$. Then $X$ is
filtrable.
\end{thm}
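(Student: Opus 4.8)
The plan is to deduce this from the classical Bialynicki-Birula decomposition for a single copy of $\G_m$, after first reducing the torus action to that case. So the first step is the reduction to $T=\G_m$. The fixed locus $X^T$ is smooth (the fixed scheme of a torus acting on a smooth $k$-variety is smooth in characteristic zero), so let $Z_0,\dots,Z_n$ be its connected components; each $N_{Z_m/X}$ is a $T$-equivariant bundle carrying finitely many $T$-weights, all nontrivial (since $Z_m$ is open in $X^T$, the normal bundle has no $T$-invariant part). Choose a one-parameter subgroup $\lambda\colon\G_m\to T$ that kills none of these finitely many weights. Then $X^{\lambda(\G_m)}=X^T$: along $\lambda$ the $T$-fixed locus can only grow in normal directions of weight killed by $\lambda$, and by properness $X^{\lambda(\G_m)}$ has no component disjoint from $X^T$. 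Finally, since $T$ is commutative it centralizes $\lambda(\G_m)$, so any $\lambda(\G_m)$-stable locally closed subset of $X$ is automatically $T$-stable and any $\lambda(\G_m)$-equivariant vector bundle on it is canonically $T$-equivariant. Hence it is enough to produce the required filtration for the $\G_m$-action through $\lambda$; the $T$-equivariant structure then comes for free.

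For the second step I would invoke the Bialynicki-Birula theorem for $\G_m$ acting on the smooth projective variety $X$, in the form valid over an arbitrary field due to Hesselink \cite{Hessel} (see \cite{BB}). Because $X$ is proper, $\lim_{t\to 0}t\cdot x$ exists for every $x\in X$ and lies in a unique component of $X^T$; setting $X^+_m=\{x\in X\mid\lim_{t\to 0}t\cdot x\in Z_m\}$, the theorem says that $X^T=\coprod_{m=0}^n Z_m$ is projective (closed in $X$), that each $X^+_m$ is a smooth locally closed subvariety, that the $X^+_m$ partition $X$, and that $\phi_m\colon X^+_m\to Z_m$, $x\mapsto\lim_{t\to 0}t\cdot x$, is a Zariski-locally trivial bundle with fibre the positive-weight part of the normal space of $Z_m$ in $X$ --- a vector bundle, since $X$ and $Z_m$ are smooth. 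By the first step $Z_m$, $X^+_m$ and $\phi_m$ are all $T$-equivariant.

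The third step is to order the components so that the strata assemble into a filtration by closed subschemes. By the standing linearity hypothesis $X$ admits a $T$-linearized ample line bundle $L$ (Sumihiro \cite{Sumihiro}); let $w_m\in\Z$ be the $\lambda(\G_m)$-weight of the fibre of $L$ over $Z_m$ (constant, as $Z_m$ is connected). A standard consequence of the analysis of $\G_m$-orbit closures --- comparing, along each nontrivial orbit closure $\P^1\subseteq X$, the two weights of $L|_{\P^1}=\mathcal{O}(d)$ with $d>0$ at its source and its sink --- is that, after renumbering so that $w_0\le w_1\le\cdots\le w_n$ (ties broken arbitrarily), every union $X_m:=\bigcup_{l\le m}X^+_l$ is closed in $X$. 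Granting this, $\emptyset=X_{-1}\subset X_0\subset\cdots\subset X_n=X$ is a filtration by $T$-invariant closed subschemes, $X_m\setminus X_{m-1}=X^+_m$, and $\phi_m\colon X^+_m\to Z_m$ is the required $T$-equivariant vector bundle; thus $X$ is filtrable.

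I expect the main obstacle to be the Bialynicki-Birula decomposition itself --- establishing that the attracting loci are smooth and locally closed and carry a vector-bundle structure over the fixed components --- together with the closedness of the pieces $X_m$; since both are classical, I would only recall them and cite \cite{BB} and \cite{Hessel}. The reduction from $T$ to $\G_m$ and the weight bookkeeping that produces the ordering are routine.
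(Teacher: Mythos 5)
Your proposal is correct and follows essentially the same route the paper takes: the paper does not prove the theorem itself but cites \cite{BB} and \cite{Hessel} and sketches exactly the construction you give (choose a generic one-parameter subgroup $\lambda \subset T$ with $X^{\lambda} = X^T$, take the strata to be $X_{+}(Z,\lambda) = \{x \in X \mid \lim_{t\to 0}\lambda(t)\cdot x \in Z\}$, and note that the fibre of $X_{+}(Z,\lambda)\to Z$ is the positive weight space of the tangent space). Your version simply fills in the routine details --- why a generic $\lambda$ gives $X^{\lambda}=X^T$, why the $\G_m$-equivariant data upgrades to $T$-equivariant data by commutativity, and how the weights of a $T$-linearized ample bundle order the components into a filtration by closed subsets --- all of which are part of the standard Bialynicki-Birula/Hesselink package being cited.
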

This is roughly proven by choosing a generic one parameter subgroup 
$\lambda \subset T$ such that $X^{\lambda} = X^T$ and taking the various strata 
of the filtration to be the locally closed $T$-invariant subsets of the form
\[
X_{+}(Z, \lambda) =
\{x \in X| \ {\underset{t \to 0}{\rm lim}} \ \lambda(t) \cdot x \in Z\},
\]
where $Z$ is a connected component of $X^T$. Note that the above limit
exists since $X$ is projective. The fiber of the equivariant vector bundle
$X_{+}(Z, \lambda) \to Z$ is the positive eigenspace $T^{+}_{z}X$ for the
$T$-action on the tangent space of $X$ at any point $z \in Z$.

\begin{lem}\label{lem:filter-Equiv}
Let $X$ be a filtrable variety with an action of a split torus $T$ 
as above. Then for every $0 \le m \le n$,
there is a canonical split exact sequence
\begin{equation}\label{eqn:filter-Equiv*}
0 \to \Omega^T_*(X_{m-1}) \xrightarrow{i_{(m-1)*}} \Omega^T_*(X_m) 
\xrightarrow{j^*_m} \Omega^T_*(U_m) \to 0.
\end{equation}
\end{lem}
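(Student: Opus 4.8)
The plan is to prove Lemma~\ref{lem:filter-Equiv} by induction on $m$, mimicking the structure of the proof of Proposition~\ref{prop:filter-Gen} but now in the equivariant setting, using that the normal bundle of each stratum closure carries a positive-weight $T$-action. First I would set up the localization sequence of Theorem~\ref{thm:Basic}$(ii)$ for the pair $X_{m-1} \stackrel{i_{m-1}}{\inj} X_m \stackrel{j_m}{\hookleftarrow} U_m$, which gives exactness at $\Omega^T_*(X_m)$ and surjectivity of $j^*_m$. So the two things to establish are: (a) injectivity of $i_{(m-1)*}$, and (b) the existence of a canonical splitting of $j^*_m$. Both will come from the fact that $U_m \xrightarrow{\phi_m} Z_m$ is a $T$-equivariant vector bundle with $Z_m$ smooth projective, together with the self-intersection formula of Proposition~\ref{prop:SIF}.

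For the splitting, I would imitate the argument of Proposition~\ref{prop:filter-Gen} verbatim at the level of the finite-dimensional approximations $X_m \stackrel{T}{\times} U_j$: take the closure $\ov V_m$ of the graph of $\phi_m$ in $X_m \times Z_m$, resolve to get $Y_m \to \ov V_m$ (equivariantly, using equivariant resolution of singularities in characteristic zero), and form the composite ${\ov p_m}_* \circ {\ov q_m}^* \circ ({\phi_m^*})^{-1}$. The same Cartesian-square/transversality diagram chase shows this splits $j^*_m$ on each approximation compatibly, hence on the inverse limit $\Omega^T_*(-)$; here one needs that all the maps involved (projective push-forward, smooth pull-back, homotopy-invariance isomorphism) descend to the tower defining equivariant cobordism, which is Theorem~\ref{thm:Basic} together with the compatibility already invoked in the proof of Proposition~\ref{prop:SIF}. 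Alternatively, and perhaps more cleanly, one can run Proposition~\ref{prop:filter-Gen} itself for the induced filtration $\{X_m \stackrel{T}{\times} U_j\}$ with strata the vector bundles $U_m \stackrel{T}{\times} U_j \to Z_m \stackrel{T}{\times} U_j$ (the latter smooth and projective since $Z_m$ is and $U_j/T$ is quasi-projective) — but that computes $MGL'$, so one would then invoke Theorem~\ref{thm:LCOMP} to transfer back to $\Omega_*$, and take the limit over $j$.

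Once the splitting exists, injectivity of $i_{(m-1)*}$ is automatic: the long exact localization sequence then breaks into the short exact sequences \eqref{eqn:filter-Equiv*}, split by the constructed section, and in particular $i_{(m-1)*}$ is split injective. (One could alternatively deduce injectivity directly from Proposition~\ref{prop:SIF}: the normal bundle $N_m$ of $X_{m-1}$'s relevant piece has strictly positive $T$-weights along the Bialynicki-Birula stratification, so its top equivariant Chern class is a non-zero divisor in $\Omega^*_T$ — but this requires $X_{m-1}$ to be smooth, which is not assumed, so routing injectivity through the splitting is the safer path.)

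The main obstacle I anticipate is bookkeeping the passage to the limit: Proposition~\ref{prop:filter-Gen} is stated for $MGL'$ on a fixed scheme, so to apply it to the mixing spaces $X_m \stackrel{T}{\times} U_j$ one must check that the constructed splittings are compatible as $j$ varies (i.e. commute with the transition maps in the pro-system, including the truncation by the niveau filtration). This amounts to checking that the resolution $Y_m$ and the maps ${\ov p_m}, {\ov q_m}$ can be chosen stably in $j$ — which follows because $X_m \stackrel{T}{\times} U_j$ is itself an $\Omega^*(BT)$-approximation built by a vector-bundle-type construction, and Lemma~\ref{lem:Niv-PB} guarantees the smooth pull-backs preserve the niveau filtration so the maps descend to the truncated groups ${\Omega^G_*(-)}_j$. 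Verifying this compatibility carefully, rather than any single geometric input, is where the real work lies; everything else is a transcription of Proposition~\ref{prop:filter-Gen}'s diagram chase into the equivariant framework.
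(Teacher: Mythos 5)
Your second, preferred approach (apply Proposition~\ref{prop:filter-Gen} to the mixing spaces $X_m \stackrel{T}{\times} U_j$, invoke Theorem~\ref{thm:LCOMP}, then pass to the inverse limit over $j$ after checking the splitting respects the niveau filtration) is exactly the route the paper takes. However, there is a genuine gap in your justification of the hypothesis of Proposition~\ref{prop:filter-Gen}: you claim the base $Z_m \stackrel{T}{\times} U_j$ is ``smooth and projective since $Z_m$ is and $U_j/T$ is quasi-projective.'' That inference fails: a fiber bundle with projective fibers over a quasi-projective base is only quasi-projective, not projective, and Proposition~\ref{prop:filter-Gen} genuinely needs projectivity of the base strata (it is used to make $\ov{p}_m$ projective, so that ${\ov{p}_m}_*$ exists). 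A general good pair gives no projectivity of $U_j/T$.

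The fix, as in the paper, is to \emph{choose} the good pairs so that $U_j/T$ is itself projective: the paper takes the explicit good pairs from the proof of Lemma~\ref{lem:trivial-T}, for which $U_j/T \cong (\P^{j-1}_k)^r$. Since $T$ acts trivially on $Z_m$, this gives $(Z_m)^j \cong Z_m \times (\P^{j-1}_k)^r$, which \emph{is} smooth and projective, and the hypothesis is satisfied. Your first approach (imitating the proof of Proposition~\ref{prop:filter-Gen} verbatim on the approximations, with an equivariant resolution) would hit the same obstruction: $\ov{p}_m^j$ must be projective, which again forces a projective choice of $U_j/T$. Beyond this, your outline of the niveau-filtration bookkeeping is on the right track — the paper checks that the splitting $({\ov{p}_m})_* \circ (\ov{q}_m)^* \circ (\phi_m^*)^{-1}$ preserves the filtration by combining the push-forward statement (\cite[Lemma~3.3]{Krishna4}) with Lemma~\ref{lem:Niv-PB} for the smooth pull-back — so you have identified the right ingredients, but the missing projectivity of the mixing quotient is the substantive error.
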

\begin{proof} Let $r$ be the rank of $T$ and for each $j \ge 1$, let
$(V_j, U_j)$ be the good pair for the $T$-action corresponding to $j$ as 
chosen in the proof of Lemma~\ref{lem:trivial-T}.
Notice in particular that ${U_j}/{T} \cong \left(\P^{j-1}_k\right)^r$.
For any scheme $Y$ with $T$-action, we set $Y^j = Y \stackrel{T}{\times} U_j$
for every $j$. Given the $T$-equivariant filtration as in 
~\eqref{eqn:filtration-BB}, it is easy to see that for each $j$, there is an 
associated system of filtrations 
\begin{equation}\label{eqn:filter-Equiv1}
{\emptyset} = (X_{-1})^j \subset (X_0)^j \subset \cdots \subset (X_n)^j = X^j
\end{equation}
and maps ${\phi}_m : (U_m)^j = (X_m)^j \setminus (X_{m-1})^j \to (Z_m)^j$ for 
$0 \le m \le n$ which are all vector bundles. Observe also that
as $T$ acts trivially on each $Z_m$, we have that $(Z_m)^j 
\cong Z_m \times \left({U_j}/T\right) \cong Z_m \times \left(\P^{j-1}_k\right)^r$.
Since $Z_m$ is smooth and projective, this in turn implies that
$(Z_m)^j$ is a smooth projective variety. We conclude that the filtration
~\eqref{eqn:filter-Equiv1} of $X^j$ satisfies all the conditions of 
Proposition~\ref{prop:filter-Gen}. In particular, we get split exact
sequences 
\[
0 \to MGL'_{*,*}\left((X_{m-1})^j\right) \to  MGL'_{*,*}\left((X_{m})^j\right)
\to  MGL'_{*,*}\left((U_{m})^j\right) \to 0.
\]
Applying Theorem~\ref{thm:LCOMP}, we get for each $0 \le m \le n$,
$i \in \Z$ and $j \ge 1$,
the canonical split exact sequence
\[
0 \to \Omega_i\left((X_{m-1})^j\right) \xrightarrow{i_{(m-1)*}} 
\Omega_i\left((X_{m})^j\right) \xrightarrow{j^*_m} 
\Omega_i\left((U_{m})^j\right) \to 0.
\]
By \cite[Theorem~3.4]{Krishna4}, this sequence remains exact at each 
level of the niveau filtration.
We now claim that the inverse $(j^*_m)^{-1}$ also preserves this filtration.
By the construction of the inverse of $j^*_m$ in diagram~\eqref{eqn:split1},
we only need to show that ${{\ov{p}}_m}_*$ and ${\ov{q}}^*_m$ preserve
the niveau filtration. This holds for ${{\ov{p}}_m}_*$ by 
\cite[Lemma~3.3]{Krishna4}
and so is the case for ${\ov{q}}^*_m$ by Lemma~\ref{lem:Niv-PB}.
This proves the claim. 
We conclude that there are canonical split exact sequences
\[
0 \to {\Omega_i\left(X_{m-1}\right)}_j \xrightarrow{i_{(m-1)*}} 
{\Omega_i\left(X_{m}\right)}_j \xrightarrow{j^*_m} 
{\Omega_i\left(U_{m}\right)}_j \to 0. 
\]
Taking the inverse limit over $j$, we get the desired split exact sequence 
~\eqref{eqn:filter-Equiv*}.
\end{proof}

The following is the main result about the equivariant cobordism for the
torus action on smooth filtrable varieties. 

\begin{thm}\label{thm:Main-Str}
Let $T$ be a split torus of rank $r$ acting on a filtrable variety
$X$ and let $i: X^T =  \stackrel{n}{\underset{m=0}\coprod} Z_m \inj X$ 
be the inclusion of the fixed point locus. Then there is a canonical
isomorphism
\[
\stackrel{n}{\underset{m = 0}\bigoplus} 
\Omega^T_*(Z_m) \xrightarrow{\cong} \Omega^T_*(X).
\]
of $S$-modules.
In particular, there is a canonical isomorphism
\begin{equation}\label{eqn:Main-Str1}
\Omega^T_*(X) \xrightarrow{\cong} \Omega_*(X)[[t_1, \cdots , t_r]]
\end{equation}
of $S$-modules. 
\end{thm}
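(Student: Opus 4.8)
The plan is to prove the two displayed isomorphisms of Theorem~\ref{thm:Main-Str} in sequence, reducing everything to the structural input already assembled. First I would establish the decomposition $\bigoplus_{m=0}^n \Omega^T_*(Z_m) \xrightarrow{\cong} \Omega^T_*(X)$ by induction on the length $n$ of the filtration \eqref{eqn:filtration-BB}, using Lemma~\ref{lem:filter-Equiv}. The base case $n = 0$ is the equivariant homotopy invariance of Theorem~\ref{thm:Basic}$(iii)$ applied to the $T$-equivariant vector bundle $\phi_0 : X = U_0 \to Z_0$. For the inductive step, the split exact sequence
\[
0 \to \Omega^T_*(X_{m-1}) \xrightarrow{i_{(m-1)*}} \Omega^T_*(X_m) \xrightarrow{j^*_m} \Omega^T_*(U_m) \to 0
\]
of Lemma~\ref{lem:filter-Equiv} gives, via the induction hypothesis applied to $X_{m-1}$ and via homotopy invariance $\Omega^T_*(U_m) \cong \Omega^T_*(Z_m)$ (again Theorem~\ref{thm:Basic}$(iii)$), a splitting $\Omega^T_*(X_m) \cong \Omega^T_*(X_{m-1}) \oplus \Omega^T_*(Z_m) \cong \bigoplus_{j=0}^m \Omega^T_*(Z_j)$. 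One should remark that all these maps are $S$-linear, since the push-forward, smooth pull-back, and the splitting constructed in \eqref{eqn:split1} are all maps of $S$-modules (they are built from projective push-forwards and smooth/l.c.i. pull-backs, all of which are $\Omega^*_T(k)$-linear by Theorem~\ref{thm:Basic}); hence the isomorphism $\bigoplus_m \Omega^T_*(Z_m) \xrightarrow{\cong} \Omega^T_*(X)$ is an isomorphism of $S$-modules. I should note that the map in the statement is $i^* = (i_0^*,\dots,i_n^*)$ composed with the splitting, or more precisely the composite $\bigoplus_m \Omega^T_*(Z_m) \to \bigoplus_m \Omega^T_*(U_m) \to \Omega^T_*(X)$; in any case canonicity follows from canonicity of each ingredient.

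Next I would deduce \eqref{eqn:Main-Str1}. Since each $Z_m$ is a connected component of $X^T$, the torus $T$ acts trivially on $Z_m$, so $\Omega^T_*(Z_m) = \Omega_*\bigl(Z_m \stackrel{T}{\times} U_j\bigr)$-type computations apply: by \cite[Proposition~7.1]{Krishna4} (the analogue, for trivial action, of the projective bundle / classifying space computation), or directly from \eqref{eqn:CBT*} together with the projective bundle formula applied to the $\left(\P^{j-1}_k\right)^r$-bundles $(Z_m)^j \cong Z_m \times \left(\P^{j-1}_k\right)^r$, one obtains a canonical isomorphism of $S$-modules $\Omega^T_*(Z_m) \cong \Omega_*(Z_m) \otimes_{\bL} S = \Omega_*(Z_m)[[t_1,\dots,t_r]]$, where the last equality uses \eqref{eqn:CBT*} and the convention on graded power series rings from Lemma~\ref{lem:GPSR}. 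Taking the direct sum over $m$ and combining with the first isomorphism gives
\[
\Omega^T_*(X) \cong \bigoplus_{m=0}^n \Omega_*(Z_m)[[t_1,\dots,t_r]] \cong \Bigl(\bigoplus_{m=0}^n \Omega_*(Z_m)\Bigr)[[t_1,\dots,t_r]].
\]
Finally, by applying Theorem~\ref{thm:FF*} (or more directly by running the non-equivariant analogue of the filtration argument: the base field case of Proposition~\ref{prop:filter-Gen} via Theorem~\ref{thm:LCOMP}, comparing with Lemma~\ref{lem:filter-Equiv} specialized to $U_j$ and passing to the niveau quotient and then the limit, which is exactly what the proof of Lemma~\ref{lem:filter-Equiv} already yields at each level $j$), one has $\bigoplus_{m=0}^n \Omega_*(Z_m) \xrightarrow{\cong} \Omega_*(X)$, whence \eqref{eqn:Main-Str1}.

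The main obstacle I anticipate is bookkeeping rather than a deep point: making sure the isomorphism $\Omega^T_*(Z_m) \cong \Omega_*(Z_m)[[t_1,\dots,t_r]]$ is genuinely canonical and $S$-linear, and that it is compatible with the decomposition so that the resulting statement is functorial. Concretely, one must check that the splitting of \eqref{eqn:filter-Equiv*} constructed from diagram \eqref{eqn:split1} is compatible with the module structure over $S$ and with the trivial-action computation on the $Z_m$; this amounts to verifying that ${\ov p_m}_* \circ {\ov q_m}^* \circ (\phi_m^*)^{-1}$ intertwines the $S$-actions, which it does since every map involved is $\Omega^*_T(k)$-linear. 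A secondary subtlety is that \eqref{eqn:Main-Str1} should be read with the \emph{graded} power series convention: since $\bL$ and $\Omega_*(Z_m)$ are in general unbounded in both directions only through the $\bL$-grading while the $t_i$ have negative degree, the coefficient ring $\Omega_*(X)[[t_1,\dots,t_r]]$ is the graded power series ring $\Omega_*(X)[[{\bf t}]]_{\rm gr}$ of Lemma~\ref{lem:GPSR}, and one invokes Lemma~\ref{lem:GPSR}$(vi)$ to commute the direct sum past the power series construction; this is a formal manipulation but worth stating explicitly. Beyond these organizational checks, no hard analysis is required — all the substantive work is already packaged in Lemma~\ref{lem:filter-Equiv}, Theorem~\ref{thm:BBH}, and Theorem~\ref{thm:FF*}.
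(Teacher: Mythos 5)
Your proof is correct and takes essentially the same route as the paper: induction on the length of the filtration via Lemma~\ref{lem:filter-Equiv} to get both the equivariant and non-equivariant direct sum decompositions, then Lemma~\ref{lem:trivial-T} (which is exactly what you rederive from the projective-bundle computation on $(Z_m)^j \cong Z_m \times (\P^{j-1}_k)^r$) to pass from $\Omega^T_*(Z_m)$ to $\Omega_*(Z_m)[[t_1,\dots,t_r]]$. The one small slip is the citation of Lemma~\ref{lem:GPSR}$(vi)$ for commuting the finite direct sum past the graded power series construction — that clause concerns inverse limits, not direct sums — but the commutation you need is elementary for a finite direct sum and doesn't require that clause.
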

\begin{proof} By inducting on $n$, it follows from 
Lemma~\ref{lem:filter-Equiv} that there are canonical isomorphisms
\[
\stackrel{n}{\underset{m = 0}\bigoplus} \ 
\Omega^T_*(Z_m) \xrightarrow{\cong} \Omega^T_*(X) \ {\rm and} \
\ \stackrel{n}{\underset{m = 0}\bigoplus} \ \Omega_*(Z_m) 
\xrightarrow{\cong} \Omega_*(X)
\]
of $S$-modules. 
The second assertion now follows from these two isomorphisms and
Lemma~\ref{lem:trivial-T} since $T$ acts trivially on $X^T$.
\end{proof}

Recall from Subsection~\ref{subsection:FCL} that if $Y \to X$ is a 
$T$-equivariant projective morphism of $k$-schemes with $T$-action such that
$Y$ is smooth, then $[Y \to X]$ defines unique elements in $\Omega^T_*X)$
and $\Omega_*(X)$, called the fundamental classes of $[Y \to X]$. 
 
\begin{cor}\label{cor:FIXEDPOINTS}
Let $T$ be a split torus of rank $r$ acting on a filtrable variety
$X$ such that $X^T$ is the finite set of smooth closed points 
$\{x_0, \cdots , x_n\}$. For $0 \le m \le n$, let $f_m : \wt{X}_m \to X_m$ be a
$T$-equivariant resolution of singularities and let $\wt{x}_m$ be the
fundamental class of the $T$-invariant cobordism cycle
$[\wt{X}_{m} \to X]$ in $\Omega^T_*(X)$.
Then $\Omega^T_*(X)$ is a free $S$-module with a basis $\{\wt{x}_0,
\cdots , \wt{x}_n\}$. In particular, $\Omega_*(X)$ is a free $\bL$-module
with a basis $\{\wt{x}_0, \cdots , \wt{x}_n\}$.
\end{cor}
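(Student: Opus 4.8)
The plan is to reduce Corollary~\ref{cor:FIXEDPOINTS} to Theorem~\ref{thm:Main-Str} by identifying the classes $\wt{x}_m$ with the summands in the decomposition $\Omega^T_*(X) \cong \bigoplus_{m=0}^n \Omega^T_*(Z_m)$. Since each $Z_m = \{x_m\}$ is a single reduced point, we have $\Omega^T_*(Z_m) = \Omega^T_*(\Spec k) = S$, so Theorem~\ref{thm:Main-Str} already gives that $\Omega^T_*(X)$ is a free $S$-module of rank $n+1$. The substance of the corollary is that the geometrically natural classes $\wt{x}_m$ — the equivariant fundamental classes of $T$-equivariant resolutions $\wt{X}_m \to X_m \hookrightarrow X$ — form an $S$-basis, i.e. the change-of-basis matrix between these classes and the abstract direct-sum basis is invertible over $S$.

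First I would fix the filtration $\emptyset = X_{-1} \subset X_0 \subset \cdots \subset X_n = X$ and the numbering of $X^T$ coming from Theorem~\ref{thm:BBH}, arranged (as is standard for the Bia{\l}ynicki-Birula decomposition) so that $U_m = X_m \setminus X_{m-1}$ is a $T$-equivariant affine-space bundle $\phi_m : U_m \to Z_m = \{x_m\}$ with $X_m$ closed in $X$. The key geometric observation is that $\wt{X}_m$ maps to $X$ through $X_m$, so the fundamental class $\wt{x}_m$ lies in the image of $i_{m*} : \Omega^T_*(X_m) \to \Omega^T_*(X)$; hence $\wt{x}_m \in F_m$ where $F_m := \operatorname{im}(\Omega^T_*(X_m) \to \Omega^T_*(X))$ is the increasing filtration whose successive quotients, by Lemma~\ref{lem:filter-Equiv}, are $F_m/F_{m-1} \cong \Omega^T_*(U_m) \cong \Omega^T_*(Z_m) = S$ (the second isomorphism by equivariant homotopy invariance, Theorem~\ref{thm:Basic}(iii)). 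So it suffices to check that the image of $\wt{x}_m$ in $F_m/F_{m-1} \cong S$ is a unit in $S$ — in fact I expect it to be exactly $1$, up to a sign and a unit. To see this, restrict everything along $j_m^* : \Omega^T_*(X_m) \to \Omega^T_*(U_m)$: the cycle $[\wt{X}_m \to X_m]$ pulled back to $U_m$ becomes $[\wt{X}_m \times_{X_m} U_m \to U_m]$, and since $U_m$ is open in $X_m$ and $\wt{X}_m \to X_m$ is a resolution that is an isomorphism over the smooth locus (which contains the dense orbit structure of $U_m$; more carefully, one may choose the resolution to be an isomorphism over $U_m$ since $U_m$ is already smooth), this restricted cycle is the identity class $[U_m \xrightarrow{\id} U_m]$, whose image under the homotopy-invariance isomorphism $\Omega^T_*(U_m) \cong \Omega^T_*(Z_m) = S$ is the unit $1$. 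Therefore the matrix expressing $(\wt{x}_0, \dots, \wt{x}_n)$ in terms of any basis adapted to the filtration $F_\bullet$ is unipotent upper-triangular over $S$, hence invertible, and $\{\wt{x}_0, \dots, \wt{x}_n\}$ is an $S$-basis of the free module $\Omega^T_*(X)$.

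For the final sentence of the corollary, I would apply $- \otimes_S \bL$ using Theorem~\ref{thm:FF*}: since $\Omega^T_*(X) \otimes_S \bL \xrightarrow{\cong} \Omega_*(X)$ and $S \otimes_S \bL = \bL$, a free $S$-basis of $\Omega^T_*(X)$ maps to a free $\bL$-basis of $\Omega_*(X)$, and the forgetful map sends the equivariant fundamental class $\wt{x}_m$ to the non-equivariant fundamental class of $[\wt{X}_m \to X]$, which is what is denoted $\wt{x}_m$ in the non-equivariant statement. The main obstacle I anticipate is the bookkeeping around the resolution $f_m : \wt{X}_m \to X_m$: one must be careful that the fundamental class $\wt{x}_m$ really restricts to the identity class on $U_m$, which is clean if we are free to choose $f_m$ to be an isomorphism over the smooth open $U_m$ (resolution of singularities in characteristic zero allows this), and one must verify compatibility of $j_m^*$ with taking fundamental classes of projective morphisms — but this is exactly the base-change/functoriality built into Theorem~\ref{thm:Basic}(i) together with the construction of equivariant fundamental classes in Subsection~\ref{subsection:FCL}. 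No genuinely new input beyond Theorems~\ref{thm:Main-Str}, \ref{thm:FF*}, \ref{thm:BBH} and the basic functoriality is needed.
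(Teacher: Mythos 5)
Your proposal is correct and follows essentially the same route as the paper: both arguments rest on the filtration furnished by Lemma~\ref{lem:filter-Equiv} and on the key observation that $j^*_m$ sends $\wt{x}_m$ to the unit class $1 \in \Omega^T_*(U_m) \cong S$, the only difference being that you package the induction as a unipotent change-of-basis argument and spell out (more fully than the paper, which simply asserts $j^*_n(\wt{x}_n)=1$) why choosing the resolution to be an isomorphism over the smooth open stratum $U_m$ makes the restricted cycle the identity class.
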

\begin{proof} It follows by inductively applying Lemma~\ref{lem:filter-Equiv} 
that $\{\wt{x}_0, \cdots , \wt{x}_n\}$ spans $\Omega^T_*(X)$ as $S$-module.
We show its linear independence by the induction on the length of the 
filtration. There is nothing to prove for $n = 0$ and so we assume that 
$n \ge 1$. Let $i_{(n-1)} : X_{n-1} \to X_n$ be the inclusion map. 
Notice that for $0 \le m \le n-1$, $\wt{x}_m$ defines a unique 
class $\wt{x}'_m$ in $\Omega^T_*(X_{n-1})$ and 
$\wt{x}_m = {i_{(n-1)}}_*(\wt{x}'_m)$.

Since $U_n$ is an affine space, it follows from Lemma~\ref{lem:filter-Equiv} 
that $\stackrel{n}{\underset{m = 0}\sum} a_m \wt{x}_m = 0$ implies that
$a_n j^*_n(\wt{x}_n) = 0$. Since $j^*_n(\wt{x}_n) = 1$, we conclude that
$a_n = 0$. It follows again from Lemma~\ref{lem:filter-Equiv} that
$\stackrel{n-1}{\underset{m = 0}\sum} a_m \wt{x}'_m = 0$ in 
$\Omega^T_*(X_{n-1})$ and hence $a_m = 0$ for each $m$ by induction. 
This proves the first assertion of the
corollary. The second assertion follows from the first and 
Theorem~\ref{thm:FF*}.
\end{proof}  

\subsection{Case of toric varieties}
Let $X = X(\Delta)$ be a smooth projective toric variety associated to a fan
$\Delta$ in $N = \Hom(\G_m, T)$. Then the fixed point locus $X^T$ is the
disjoint union of the closed orbits $O_{\sigma}$, where $\sigma$ runs over
the set of $r$-dimensional cones in $\Delta$. Since $X$ is projective and any
orbit is affine, we see that $\sigma$ is a maximal cone if and only if
$O_{\sigma}$ is a closed point and hence a fixed point. In other words,
$X^T = {\underset{\sigma \in \Delta_{\rm max}}\prod} O_{\sigma}
= \{x_0, \cdots , x_n\}$. Note also that all these closed points 
are in fact $k$-rational. For any $\sigma \in \Delta$, let
$V(\sigma)$ denote the closure of the orbit $O_{\sigma}$ in $X$.

It follows from Theorem~\ref{thm:Main-Str} that $\Omega^*_T(X)$ is a free
$S$-module of rank $|\Delta_{\max}|= n+1$. Moreover, it follows from
Corollary~\ref{cor:FIXEDPOINTS} that a
basis of this free module is given by $\{\wt{x}_0, \cdots ,\wt{x}_n\}$, where
$\wt{x}_m$ is the fundamental class of the $T$-invariant cobordism cycle
$[X_{m} \to X]$.

If we now fix an ordering $\{\sigma_0, \cdots , \sigma_n\}$ of
$\Delta_{\rm max}$ and let $\tau_m \subset \sigma_m$ be the cone which is the
intersection of $\sigma_m$ with all those $\sigma_j$ such that $j \ge m$
and which intersect $\sigma_m$ in dimension $r-1$, then we can choose
the ordering of $\Delta_{\rm max}$ such that 
\[
\tau_i \subset \sigma_j \ {\rm only \ if} \ i \le j.
\]
In this case, $X_m$ is same as $V(\tau_{n-m})$ for $0 \le m \le n$, which is
itself a smooth toric variety. Let $[V(\tau_i)]$ denote the fundamental
class of $[V(\tau_i) \to X]$. We conclude the following.

\begin{cor}\label{cor:T-FREE}
Let $X = X(\Delta)$ be as above. \\
$(i) \ \Omega^*_T(X)$ is free $S$-module with basis $\{[V(\tau_0)], \cdots ,
[V(\tau_n)]\}$. \\
$(ii) \ \Omega^*(X)$ is free $\bL$-module with basis $\{[V(\tau_0)], \cdots ,
[V(\tau_n)]\}$. \\
$(iii)$ There is an $MGL^{*,*}(k)$-linear isomorphism 
$MGL^{*, *}(X) \cong \left(MGL^{*, *}(k)\right)^{n+1}$.
\end{cor}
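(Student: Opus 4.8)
The plan is to derive all three parts of Corollary~\ref{cor:T-FREE} from the two structural theorems of this section together with \thmref{thm:FF*} and \thmref{thm:LCOMP}, doing very little additional work. First I would observe that the setup leading up to the corollary already establishes that, with the chosen ordering of $\Delta_{\max}$, the filtration of $X$ by the closed subschemes $X_m = V(\tau_{n-m})$ is precisely a $T$-equivariant filtration of the kind appearing in \thmref{thm:Main-Str}, with successive strata $U_m = X_m \setminus X_{m-1}$ being $T$-equivariant affine bundles over the fixed points $x_m$ (equivalently, over the closed orbits $O_{\sigma_{n-m}}$). So $X$ is filtrable and \corref{cor:FIXEDPOINTS} applies verbatim with $\wt{X}_m = X_m = V(\tau_{n-m})$, which is already smooth (being a smooth toric variety), so no resolution is needed and $\wt{x}_m = [V(\tau_{n-m})]$. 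Re-indexing, part $(i)$ is exactly the statement that $\Omega^*_T(X)$ is a free $S$-module on $\{[V(\tau_0)], \dots, [V(\tau_n)]\}$.

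For part $(ii)$, I would apply the forgetful map. By \thmref{thm:FF*}, $\Omega_*(X) \cong \Omega^T_*(X) \otimes_S \bL$, and tensoring the free $S$-basis $\{[V(\tau_i)]\}$ down along $S \to \bL$ gives a free $\bL$-basis, with $r^T_X([V(\tau_i)])$ being the non-equivariant fundamental class of $[V(\tau_i) \to X]$ (the forgetful map sends equivariant fundamental classes to non-equivariant ones by the construction in Subsection~\ref{subsection:FCL}). Alternatively, and perhaps more cleanly, the non-equivariant half of the proof of \thmref{thm:Main-Str} already produces the isomorphism $\bigoplus_m \Omega_*(Z_m) \xrightarrow{\cong} \Omega_*(X)$, and since each $Z_m = \{x_m\}$ is a single $k$-rational point, $\Omega_*(Z_m) = \bL$ and the image of its generator is $[x_m]$, which via the affine-bundle identification pushes forward to $[V(\tau_{n-m})]$; re-indexing gives $(ii)$.

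For part $(iii)$, I would note that the proof of \lemref{lem:filter-Equiv} already runs entirely through the motivic Borel-Moore cobordism theory $MGL'$: \propref{prop:filter-Gen}, applied to the filtration of $X$ itself (not of $X^j$), gives a canonical isomorphism $\bigoplus_{m=0}^n MGL'_{*,*}(Z_m) \xrightarrow{\cong} MGL'_{*,*}(X)$. Since each $Z_m$ is a single $k$-rational point $\Spec k$, we have $MGL'_{*,*}(Z_m) \cong MGL^{*,*}(k)$, and for the smooth projective variety $X$ the Borel-Moore theory $MGL'_{*,*}$ agrees (up to the standard reindexing) with Voevodsky's $MGL^{*,*}$. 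Hence $MGL^{*,*}(X) \cong (MGL^{*,*}(k))^{n+1}$ as an $MGL^{*,*}(k)$-module, and one checks this is $MGL^{*,*}(k)$-linear because the splitting maps constructed in \propref{prop:filter-Gen} are built from pull-backs and projective push-forwards, which are module maps.

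The only point requiring a little care—and the one I would flag as the main obstacle, though it is minor—is matching indices and verifying that the $X_m$ arising from the ordering of $\Delta_{\max}$ literally satisfy the hypotheses of \thmref{thm:Main-Str} and \corref{cor:FIXEDPOINTS}, i.e.\ that the strata maps $\phi_m : U_m \to Z_m$ are genuinely $T$-equivariant vector bundles over the fixed points. This is exactly the content of the Bialynicki-Birula decomposition (\thmref{thm:BBH}) specialized to the toric situation, where the strata $X_+(O_\sigma, \lambda)$ for a generic one-parameter subgroup are the $T$-orbits' attracting sets; one must confirm the chosen combinatorial ordering of cones realizes the BB filtration, which is precisely the condition ``$\tau_i \subset \sigma_j$ only if $i \le j$'' imposed just before the corollary. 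Granting that, all three statements are immediate consequences of the machinery already in place.
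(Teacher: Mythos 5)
Your proposal is correct and follows essentially the same route as the paper: part $(i)$ is Corollary~\ref{cor:FIXEDPOINTS} applied with $\wt{X}_m = X_m = V(\tau_{n-m})$ (no resolution needed since these are already smooth), part $(ii)$ is the image of that free $S$-basis under the isomorphism of Theorem~\ref{thm:FF*}, and part $(iii)$ is a direct application of Proposition~\ref{prop:filter-Gen} to the BB filtration of $X$ itself. The extra care you flag about matching the combinatorial ordering of cones with the Bialynicki-Birula filtration is precisely what the paragraph preceding the corollary in the paper is addressing, so your proof and the paper's are the same argument.
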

\begin{proof}
We have already shown $(i)$ above and $(ii)$ follows from $(i)$ and
Theorem~\ref{thm:FF*}. The last part follows from 
Proposition~\ref{prop:filter-Gen}.
\end{proof}

\subsection{Case of flag varieties}
Let $G$ be a connected reductive group with a split maximal torus $T$ and let
$B$ be a Borel subgroup of $G$ containing $T$. Let $X = G/B$ be the associated
flag variety of the left cosets of $B$ in $G$. Then the left multiplication
of $G$ on itself induces a natural $G$-action on $X$. In particular,
$X$ is a smooth projective $T$-variety and hence filtrable. The Bruhat
decomposition of $G$ implies that $X^T = {\underset{w \in W}\coprod} wB$,
where $W$ is the Weyl group of $G$ with respect to $T$. Notice here that
the coset $wB$ makes sense even though $w$ is not an element of $G$.
We can choose an ordering $\{w_0, \cdots, w_n\}$ of the elements of $W$ such 
that $X_m$ is the union of the Schubert varieties $X_{w_i}$ with $i \le m$.
For each $w \in W$, there is canonical $B$-equivariant resolution of 
singularities $f_w : \wt{X}_w \to X_w$ which is defined by inducting on the 
length of $w$ with $X_0$ being the closed point associated to the identity 
element of $W$. The varieties $\wt{X}_w$ are called the {\sl Bott-Samelson} 
varieties associated to $X$. We refer to \cite[Subsection~2.2]{Brion3} for a 
very nice exposition of these facts.  We shall call the fundamental classes
$[\wt{X}_w \to X]$ in $\Omega^*_T(X)$ as the Bott-Samelson cobordism classes.

It is known that these Bott-Samelson classes are not invariants of 
the associated Schubert variety $X_w$ and they depend on a reduced
decomposition of $w \in W$ in terms of the simple roots
({\sl cf.} \cite{BE}). We fix a choice $\{\wt{X}_w | w \in W\}$
of the Bott-Samelson varieties.
As an immediate consequence of Corollary~\ref{cor:FIXEDPOINTS}, we obtain
the following. It was earlier shown in \cite[Proposition~3.1]{HK} that the 
ordinary cobordism group $\Omega^*(X)$ is a free $\bL$-module. We shall 
use the following to compute the cobordism ring of $G/B$ later in this paper.  

\begin{cor}\label{cor:FLAG-BASIS}
Let $X = G/B$ be as above. Then $\Omega^*_T(X)$ is a free $S$-module
with a basis given by the Bott-Samelson classes. The similar conclusion 
holds for $\Omega^*(X)$.
\end{cor}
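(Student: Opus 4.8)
The plan is to apply Corollary~\ref{cor:FIXEDPOINTS} directly. The flag variety $X = G/B$ is a smooth projective variety with a $T$-action (induced by left multiplication of $G$), so by Theorem~\ref{thm:BBH} it is filtrable. The Bruhat decomposition identifies the fixed locus $X^T$ with the finite set $\{wB \mid w \in W\}$ of $T$-rational points, and we have already fixed an ordering $\{w_0, \dots, w_n\}$ of $W$ so that $X_m = \bigcup_{i \le m} X_{w_i}$ is the union of Schubert varieties, giving a $T$-invariant filtration of the type required in Subsection~\ref{subsection:Filtrable}. Thus $X$ together with this filtration satisfies the hypotheses of Corollary~\ref{cor:FIXEDPOINTS}, with each $X^T$-point being a smooth closed point.

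The only remaining point is to match the resolutions: Corollary~\ref{cor:FIXEDPOINTS} produces a free basis of $\Omega^*_T(X)$ out of the fundamental classes $[\wt{X}_m \to X]$ of \emph{any} choice of $T$-equivariant resolutions $f_m : \wt{X}_m \to X_m$. First I would invoke the $B$-equivariant Bott-Samelson resolutions $f_w : \wt{X}_w \to X_w$ recalled just above the statement; since these are $B$-equivariant they are in particular $T$-equivariant, so they are a legitimate choice in Corollary~\ref{cor:FIXEDPOINTS} with $\wt{X}_m := \wt{X}_{w_m}$. Applying that corollary then shows that $\{[\wt{X}_w \to X] \mid w \in W\}$ is a free $S$-module basis of $\Omega^*_T(X)$, which is exactly the assertion that the Bott-Samelson classes form a basis. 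The statement for ordinary cobordism follows immediately from the second assertion of Corollary~\ref{cor:FIXEDPOINTS} (or equivalently by applying $- \otimes_S \bL$ and Theorem~\ref{thm:FF*}), recovering and slightly refining \cite[Proposition~3.1]{HK}.

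There is essentially no obstacle here: the entire content has already been extracted in Corollary~\ref{cor:FIXEDPOINTS} and in the discussion of Bott-Samelson varieties, and the proof is a one-line citation. The only thing to be mildly careful about is that the Bott-Samelson varieties $\wt{X}_w$ are genuinely smooth and that the maps $f_w$ are genuinely projective birational (hence resolutions in the sense needed), but these are standard facts for which I would simply refer to \cite[Subsection~2.2]{Brion3}.
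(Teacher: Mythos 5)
Your argument is exactly the paper's: Corollary~\ref{cor:FLAG-BASIS} is stated there as an immediate consequence of Corollary~\ref{cor:FIXEDPOINTS}, applied to the Bruhat/Bialynicki-Birula filtration of $G/B$ with the $T$-equivariant Bott-Samelson varieties serving as the resolutions $\wt{X}_m$, and the $\Omega^*(X)$ statement follows from the second clause of that corollary (or, as you say, from Theorem~\ref{thm:FF*}). Your treatment matches the paper's reasoning, including the same harmless conflation of the filtration step $X_m=\bigcup_{i\le m}X_{w_i}$ with the Schubert variety $X_{w_m}$ that the Bott-Samelson map actually resolves, which is fine since what the proof of Corollary~\ref{cor:FIXEDPOINTS} really uses is that each $\wt{X}_m\to X$ factors through $X_m$ and restricts to an isomorphism over the open cell $U_m=C_{w_m}$.
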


As another consequence of Theorem~\ref{thm:Main-Str}, we obtain the following
generalization of Brion's result \cite[Theorem~2.1]{Brion2} for smooth 
schemes. 

\begin{thm}\label{thm:Inv-Gen}
Let $T$ be a split torus acting on a smooth $k$-variety $X$. Then the 
$S$-module $\Omega^*_T(X)$ is generated by the fundamental classes
of the $T$-invariant cobordism cycles in $\Omega^*(X)$.
\end{thm}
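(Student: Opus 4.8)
The plan is to reduce the statement to the projective case handled by Theorem~\ref{thm:Main-Str}, using the niveau filtration together with equivariant resolution of singularities. First I would pass to an irreducible smooth $X$ (the general case follows from Lemma~\ref{lem:surj*} and additivity over components) and argue by descending induction on the niveau filtration $F_\bullet\Omega^*_T(X)$, which by Remark~\ref{remk:Csmooth} is the equivariant coniveau filtration. A class in $\Omega^T_*(X)$ lies in some $F_p\Omega^T_*(X)$, i.e.\ it is pushed forward from $\Omega^T_*(W)$ for some $T$-invariant closed $W \hookrightarrow X$ of dimension $\le p$; applying Sumihiro's theorem one finds a $T$-equivariant resolution $f : \widetilde W \to W$ with $\widetilde W$ smooth projective (after compactifying, which one can do equivariantly).

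Next I would invoke Theorem~\ref{thm:Main-Str}: since $\widetilde W$ is smooth projective with $T$-action it is filtrable, and $\Omega^T_*(\widetilde W)$ is generated as an $S$-module by the fundamental classes of the $T$-equivariant cobordism cycles $[\widetilde{X}_m \to \widetilde W]$ coming from a Bialynicki-Birula filtration, by Corollary~\ref{cor:FIXEDPOINTS} applied component-wise (or directly from the decomposition $\bigoplus_m \Omega^T_*(Z_m) \xrightarrow{\cong} \Omega^T_*(\widetilde W)$ and the fact that each $Z_m$ is smooth projective, so its class is an equivariant fundamental class). Composing with $f$ and with $\widetilde W \to W \hookrightarrow X$, every such generator maps to the fundamental class of a $T$-invariant cobordism cycle on $X$. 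Pushing forward the relation ``$\Omega^T_*(\widetilde W)$ is generated by fundamental classes'' along $f_* : \Omega^T_*(\widetilde W) \to \Omega^T_*(W)$ and then along the closed immersion into $X$, we see that the image of $\Omega^T_*(W)$ in $\Omega^T_*(X)$ — hence all of $F_p\Omega^T_*(X)$ — is, modulo $F_{p-1}\Omega^T_*(X)$, contained in the $S$-span of $T$-invariant fundamental classes on $X$. Wait — one must be careful that $f_*$ is surjective; since $f$ is a resolution it is proper birational, but $f_*$ need not be surjective on cobordism, so instead I would only claim that each generator $[\widetilde X_m \to \widetilde W]$ pushes forward to a $T$-invariant cycle class on $X$, and argue by comparing with the non-equivariant statement: by the projective case, the forgetful image generates, and one lifts.

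More precisely, the cleanest route avoids asking for surjectivity of $f_*$: given the class $\alpha \in F_p\Omega^T_*(X)$ coming from $W$, I would instead choose $\widetilde W$ to be a $T$-equivariant resolution of the closure of (each component of) $W$ in an equivariant projective compactification of $X$, so that the map $g : \widetilde W \to X$ is $T$-equivariant and projective with $\widetilde W$ smooth projective; then $g_*[\widetilde W \to \widetilde W] = g_*(1)$ differs from $\alpha$'s ``leading term'' by something in $F_{p-1}$, because $g$ restricted over the open dense locus of $W$ is birational, and birational proper maps induce an isomorphism on the top-dimensional graded piece of the niveau filtration. By induction on $p$ the remaining class lies in $F_{p-1}\Omega^T_*(X)$ and is handled likewise, and after finitely many steps we reach $F_{-1}=0$. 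Finally, the base of this induction and the $n=\dim X$ step together show the $S$-module is generated by the $g_*(1)$'s, which are precisely fundamental classes of $T$-invariant cobordism cycles. The main obstacle, and the point requiring the most care, is precisely this bookkeeping with the niveau filtration: showing that a proper birational $T$-equivariant map induces the expected behavior on $\mathrm{Gr}^F_p$ — this is where one genuinely needs that $X$ is smooth (so $F_\bullet$ is the coniveau filtration of Remark~\ref{remk:Csmooth}) together with the compatibility, noted after \eqref{eqn:res} and in Theorem~\ref{thm:Basic}(i), of projective push-forward and smooth pull-back with the filtration (Lemmas~\ref{lem:Niv-PB} and the niveau-exactness used in Lemma~\ref{lem:filter-Equiv}).
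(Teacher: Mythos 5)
Your proposal takes a genuinely different and considerably more complicated route than the paper, and it has real gaps. The paper's argument is short: equivariantly compactify $X$ by a $T$-equivariant open embedding $j : X \hookrightarrow Y$ with $Y$ smooth and projective (possible by canonical resolution of singularities applied to an equivariant projective completion). The restriction map $j^* : \Omega^*_T(Y) \to \Omega^*_T(X)$ is $S$-linear and surjective by the localization sequence of Theorem~\ref{thm:Basic}~$(ii)$, and $j^*$ sends the fundamental class of a $T$-invariant cobordism cycle $[\widetilde W \to Y]$ to $[\widetilde W \times_Y X \to X]$, again a fundamental class of a $T$-invariant cycle. So the statement reduces to $X$ smooth projective, where it follows from the Bialynicki--Birula filtration and the explicit splitting constructed in the proofs of Proposition~\ref{prop:filter-Gen} and Lemma~\ref{lem:filter-Equiv}: the splitting maps $\overline{p}_{m*} \circ \overline{q}^*_m \circ \phi_m^{*-1}$ manifestly carry $T$-equivariant fundamental classes on $Z_m$ (any cycle there is $T$-invariant since $T$ acts trivially) to $T$-equivariant fundamental classes on $X$.

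Your approach via push-forward from resolutions of closed $T$-invariant subschemes $W$ has two serious problems that you only partially acknowledge. First, the ``equivariant niveau filtration'' $F_\bullet\Omega^T_*(X)$ you induct on is not actually defined in the paper: Remark~\ref{remk:Csmooth} concerns only the niveau filtration on the non-equivariant cobordism of the approximation spaces $X \stackrel{T}{\times} U_j$, and the equivariant groups $\Omega^T_i(X)$ are defined as inverse limits of quotients by that filtration. In particular the premise ``a class in $\Omega^T_*(X)$ lies in some $F_p\Omega^T_*(X)$, i.e.\ is pushed forward from a $T$-invariant closed $W$ of dimension $\le p$'' has no established meaning equivariantly, and is in fact false in the naive sense (e.g.\ $S = \bL[[t_1,\ldots,t_n]]$ has elements of arbitrarily negative homological degree for $X$ a point). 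Second, the fix you offer for the non-surjectivity of $f_*$ does not work as stated: if $\widetilde W$ is a resolution of the closure of $W$ in an equivariant compactification $\overline X$, then $\widetilde W$ maps projectively to $\overline X$, not to $X$, and the restriction $\widetilde W \times_{\overline X} X \to X$ is no longer a map from a projective source — precisely the difficulty the paper's compactify-then-restrict argument is designed to avoid. You correctly sense that compactification is the right move, but you apply it to the subscheme $W$ and push forward, whereas the paper applies it to $X$ itself and pulls back; the latter makes surjectivity automatic (localization) and the preservation of generators trivial (open restriction of a smooth source stays smooth).
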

\begin{proof} Since we are in characteristic zero, we can use the
canonical resolution of singularities to get a $T$-equivariant open
embedding $j : X \inj Y$, where $Y$ is a smooth and projective $T$-variety.
Since the restriction map $\Omega^*_T(Y) \xrightarrow{j^*} \Omega^*_T(X)$
is $S$-linear and surjective by Theorem~\ref{thm:Basic} $(ii)$, it suffices 
to prove the theorem when $X$ is projective. In this case, the assertion
is proved exactly like the proofs of Theorems~\ref{thm:BBH} and 
~\ref{thm:Main-Str} using an induction on the length of the filtration. 
\end{proof}
 
\begin{remk} In \cite[Theorem~2.1]{Brion2}, Brion also describes the
relations which explicitly describe the $T$-equivariant Chow groups
in terms of the invariant cycles. It will be interesting to know what are
the analogous relations for cobordism. We shall come back to this problem
in a subsequent work.
\end{remk}

\begin{exm}\label{exm:proj-space}
As an illustration of the above structure theorems, we deduce the formula
for the equivariant cobordism ring of the projective line where $\G_m$ acts
with weight $\chi$. We can write $\P^1_k = \A^1_{0} \cup \A^1_{\infty}$ as union of
$\G_m$-invariant affine lines where the first (resp. second) affine line is the
complement of $\infty$ (resp. $0$). We get pull-back maps
$\Omega^*_{\G_m}(\P^1) \xrightarrow{i^*_0} \Omega^*_{\G_m}(\A^1_0)
\xrightarrow{\cong} \Omega^*_{\G_m}(\{0\}) \cong S(\G_m)$. We similarly have
the pull-back map $i^*_{\infty}$. It follows from Lemma~\ref{lem:filter-Equiv}
that there is there is a short exact sequence of ring homomorphisms
\[
\xymatrix{
0 \ar[r] & \Omega^*_{\G_m}(\P^1) \ar[r]^<<<<<{(i^*_0, i^*_{\infty})} &
\Omega^*_{\G_m}(\A^1_0) \times \Omega^*_{\G_m}(\A^1_{\infty}) 
\ar[r]^{\ \ \ \ \ \ \ j^*_0 - j^*_{\infty}} & \Omega^*_{\G_m}(\G_m) \ar[r] & 0.} 
\]
Identifying the last term with $\Omega^*(k) \cong \bL$ and $S(\G_m)$ with
$\bL[[t]]$, we have a
short exact sequence of ring homomorphisms
\begin{equation}\label{eqn:SReis0}
\xymatrix{
0 \ar[r] & \Omega^*_{\G_m}(\P^1) \ar[r]^<<<<<{(i^*_0, i^*_{\infty})} &
\bL[[t]] \times \bL[[t]] 
\ar[r]^{\ \ \ \ \ \ \ j^*_0 - j^*_{\infty}} & \bL \ar[r] & 0.} 
\end{equation}
It follows from this exact sequence and Lemma~\ref{lem:elem1} that there is
a ring isomorphism
\begin{equation}\label{eqn:SReis}
\frac{\bL[[x,y]]}{(xy)} \xrightarrow{\cong} \Omega^*_{\G_m}(\P^1)
\end{equation}
where the images of the variables are the fundamental classes of $0$ and
$\infty$. This gives the Stanley-Reisner presentation for the equivariant
cobordism of the projective line.
\end{exm}

\section{Localization for torus action: First steps}
\label{section:Equi*-T}
The localization theorems are the most powerful tools in the study of the 
equivariant cohomology of smooth varieties with torus action.
They provide simple formulae to describe the relation between the equivariant 
cobordism ring of a smooth projective variety and that of the fixed point 
locus for the action of a torus. The localization theorems for the equivariant 
Chow groups and their important applications are considered in \cite{Brion2}. 
Our goal in the rest of this paper is to prove such results for the 
equivariant cobordism.
These results will be used in \cite{KK} to compute the equivariant and the
non-equivariant cobordism rings of certain spherical varieties. 
More applications of these results appear in \cite{KU}.
Below, we prove some algebraic results that we need for the localization
theorems. 

Let $R$ be a commutative Noetherian ring and let 
$A = {\underset{j \in \Z}\oplus} A_j$ be a commutative graded
$R$-algebra with $R \subset A_0$. 
Let $S^{(n)} = {\underset{i \in \Z} \oplus} S_i$ be the graded power series
ring $A[[{\bf t}]] : = A[[t_1, \cdots , t_n]] $ 
({\sl cf.} Section~\ref{section:AC}).
Recall that $S_i$ is the set of formal power series of the form $f({\bf t}) = 
{\underset{m({\bf t}) \in \sC} \sum} a_{m({\bf t})}  m({\bf t})$
such that $a_{m({\bf t})}$ is a homogeneous element in $A$
and $|a_{m({\bf t})}| + |m({\bf t})| = i$. Here, $\sC$ is the set of all 
monomials in ${\bf t} = (t_1, \cdots , t_n)$ and 
$|m({\bf t})| = i_1 + \cdots + i_n$ if 
$m({\bf t}) = t^{i_1}_1 \cdots t^{i_n}_n$. Notice that 
$S^{(n-1)}[[t_i]] \cong S^{(n)}$ and ${S^{(n)}}/{(t_i)} \cong
S^{(n-1)}$. 

Recall from \cite{LM} that a formal (commutative) group law over
$A$ is a power series $F(u,v) \in A[[u,v]]$ such that \\
$(i)$ \ $F(u, 0) = F(0,u) = u \in A[[u]]$ \\
$(ii)$ \ $F(u,v) = F(v,u)$ \\
$(iii)$ \ $F\left(u, F(v,w)\right) = F\left(F(u,v), w\right)$ \\
$(iv)$ there exists (unique) $\rho(u) \in A[[u]]$ such that 
$F\left(u, \rho(u)\right) = 0$. \\
We write $F(u,v)$ as $u+_F v$. We shall denote $\rho(u)$ by $[-1]_Fu$.
Inductively, we have $[n]_Fu = [n-1]_Fu +_F u$ if $n \ge 1$ and $[n]_Fu =
[-n]_F\rho(u)$ if $n \le 0$. The sum $\stackrel{m}{\underset{i =1}\sum}
[n_i]_Fu_i$ will mean $[n_1]_Fu_1 +_F \cdots +_F [n_m]_Fu_m$ for $n_i \in \Z$.
It is known that such a power series is of the form
\begin{equation}\label{eqn:FGL**}
F(u, v) = (u + v) + uv \left({\underset{i,j \ge 1}\sum} 
a_{i,j} u^{i-1}v^{j-1}\right)
\in A[[u,v]], \ {\rm where} \ a_{i,j} \in A_{1-i-j}.
\end{equation} 
Notice that $F(u,v)$ is a homogeneous element of degree one in
the graded power series ring $A[[u,v]]$ if $u, v$ are homogeneous elements
of degree one. It follows from the above conditions that
$\rho(u) = -u + u^2{\underset{j \ge 0}\sum} b_ju^j$. We conclude that
for a set $\{u_1, \cdots, u_m\}$ of homogeneous elements of degree one,
one has 
\begin{equation}\label{eqn:FG*1}
\begin{array}{lll}
\stackrel{m}{\underset{i =1}\sum} [n_i]_Fu_i & = &
\stackrel{m}{\underset{i =1}\sum} n_iu_i + {\underset{|m({\bf u})| \ge 2}\sum}
a_{m({\bf u})} m({\bf u})
\end{array}
\end{equation}
is also homogeneous and of degree one. For the rest of this text, 
our ring $A$ will be a graded $\bL$-algebra and $F(u,v)$ will be the formal 
group law on $A$ induced from that of the universal formal group law 
$F_{\bL}$ on $\bL$. We recall the following inverse function theorem
for a power series ring.

\begin{lem}\label{lem:IFT}
Let $A$ be any commutative ring and let $\{f_1, \cdots , f_n\}$ be a set of
power series in the formal power series ring 
$\widehat{A[[{\bf t}]]}$ such that 
$((\frac{\partial f_i}{\partial t_j}))(0) \in GL_n(A)$.
Then the $A$-algebra  homomorphism
\[
\phi : \widehat{A[[{\bf t}]]} \to \widehat{A[[{\bf t}]]}, 
\ \ \phi(t_j) = f_j
\]
is an isomorphism.
\end{lem}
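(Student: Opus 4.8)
The plan is to reduce the inverse function theorem for several variables to the familiar one-variable case and to an induction on $n$, using the graded (or formal) power series structure. First I would treat the base case $n=1$: given $f_1 \in \widehat{A[[t_1]]}$ with $f_1(0)=0$ and $f_1'(0) = a$ a unit in $A$, one constructs the compositional inverse $g_1$ term by term. Writing $f_1 = a t_1 + a_2 t_1^2 + \cdots$ and $g_1 = b_1 t_1 + b_2 t_1^2 + \cdots$, the equation $f_1(g_1(t_1)) = t_1$ determines $b_1 = a^{-1}$ and then each $b_m$ recursively as a polynomial in $a^{-1}$ and the $a_i$, $b_j$ with $j<m$; since $a$ is a unit this recursion is solvable in $A$, and symmetrically $g_1(f_1(t_1))=t_1$. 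Hence $\phi$ with $\phi(t_1)=f_1$ is an isomorphism with inverse $t_1 \mapsto g_1$.

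Next I would handle the multivariable case. One clean approach: first reduce to the situation where the linear part of $\phi$ is the identity. The matrix $M = ((\partial f_i/\partial t_j))(0) \in GL_n(A)$ defines an $A$-algebra automorphism $\psi$ of $\widehat{A[[\mathbf t]]}$ sending $t_j$ to $\sum_k M_{jk} t_k$ (linear, hence invertible since $M$ is); replacing $\phi$ by $\psi^{-1}\circ \phi$, we may assume $f_i = t_i + (\text{higher order terms})$. Then I would construct the inverse $\phi^{-1}$, i.e. find $g_i = t_i + (\text{h.o.t.})$ with $f_i(g_1,\dots,g_n) = t_i$, by successive approximation in the $\mathbf t$-adic topology: set $g_i^{(1)} = t_i$ and iterate $g_i^{(N+1)} = g_i^{(N)} - \big(f_i(g^{(N)}) - t_i\big)$, checking that $f_i(g^{(N)}) - t_i \in (\mathbf t)^{N+1}$ so the sequence converges $\mathbf t$-adically to a well-defined limit $g_i$ in $\widehat{A[[\mathbf t]]}$ satisfying $f(g) = \mathrm{id}$. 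A symmetric argument (or the formal fact that a one-sided inverse of a continuous endomorphism inducing an isomorphism on $(\mathbf t)/(\mathbf t)^2$ is two-sided) gives $g(f) = \mathrm{id}$, so $\phi$ is an isomorphism.

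Alternatively, and perhaps more in the spirit of the paper's inductive style, I would induct on $n$ using $\widehat{A[[\mathbf t]]} = \widehat{A[[t_1,\dots,t_{n-1}]]}[[t_n]]$: solve for $t_n$ in terms of $f_n$ over the ring $B = \widehat{A[[t_1,\dots,t_{n-1}]]}$ via the one-variable case (after checking the relevant partial derivative is a unit in $B$, which follows from invertibility of $M$ over $A$ by looking at the reduction mod $(\mathbf t)$), substitute back, and apply the inductive hypothesis to the remaining $n-1$ power series in $t_1,\dots,t_{n-1}$. Either way, the main obstacle is purely bookkeeping: verifying that the recursive/iterative construction stays inside $A$ (using that $M \in GL_n(A)$, not merely that $\det M$ is a nonzerodivisor) and that the successive approximations genuinely converge in the $\mathbf t$-adic topology, i.e. that the "error terms" gain order at each step. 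There is no deep idea here — it is the standard formal inverse function theorem — so the proof should be short, and in the paper it is likely dispatched by citing or sketching this successive-substitution argument.
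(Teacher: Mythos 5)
Your argument is correct, and in fact the paper does not prove this lemma at all: its entire ``proof'' is the one-line citation ``Cf.\ \cite[Exercise~7.25]{Eisenbud}.'' What you have written out (the successive-approximation or one-variable-plus-induction version of the formal inverse function theorem, after normalizing the linear part by an element of $GL_n(A)$) is precisely the standard argument that reference leaves to the reader, so your proposal supplies the omitted details rather than taking a different route.

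One small point worth making explicit, since the lemma's statement does not say it: for $\phi$ to even be a well-defined $A$-algebra endomorphism of $\widehat{A[[\mathbf{t}]]}$ one needs $f_i(0)=0$ (otherwise the substitution $\sum a_k t^k \mapsto \sum a_k f(\mathbf t)^k$ need not converge $\mathbf{t}$-adically). You correctly assume this in your base case, and it is indeed implicit in the lemma; just flag it so the recursion for the constant term of the inverse does not start from an undefined place.
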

\begin{proof}
{\sl Cf.} \cite[Exercise~7.25]{Eisenbud}.
\end{proof}

\begin{lem}\label{lem:NZD}
Let $A$ and $S^{(n)}$ be as above such that no non-zero element of 
$R$ is a zero divisor in $A$. Let 
$f = \stackrel{n}{\underset{i=1}\sum}[m_i]_Ft_i$
be a non-zero homogeneous element of degree one in $S^{(n)}$. Let 
$g(f) = f^r + {\alpha}_{r-1}f^{r-1} + \cdots + {\alpha}_1f + {\alpha}_0 
\in S^{(n)}$ be such that each ${\alpha}_j$
is homogeneous of degree $r-j$ in $A$ and is nilpotent. 
Then $g(f)$ is a non-zero divisor in $S^{(n)}$.
\end{lem}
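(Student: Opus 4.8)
The plan is to reduce the statement to the case $n=1$ by a change of variables that straightens out the homogeneous linear element $f=\sum_{i=1}^n [m_i]_F t_i$ into a single coordinate, and then to run a Koszul/filtration argument on the one-variable power series ring. First I would record that, since $f$ is a nonzero degree-one homogeneous element, the formal-group formula \eqref{eqn:FG*1} gives $f=\sum_i m_i t_i + (\text{higher order})$, so the linear part of $f$ is a nonzero integral linear combination of the $t_i$; after a $\Z$-linear (hence $A$-linear, degree-preserving) change of coordinates we may assume at least one of the integers appearing is $\pm 1$, say the coefficient of $t_n$ is a unit after reordering, so that $\partial f/\partial t_n$ is a unit at the origin. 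Then the map $t_i\mapsto t_i$ for $i<n$, $t_n\mapsto f$ has Jacobian in $GL_n(A)$ at $0$, and by the inverse function theorem (Lemma~\ref{lem:IFT}) it is an automorphism of the completed power series ring $\widehat{A[[\mathbf t]]}$. Since our change of variables is homogeneous (it sends degree-one homogeneous elements to degree-one homogeneous elements), it restricts to an automorphism of the graded subring $S^{(n)}=A[[\mathbf t]]_{\rm gr}$ as well, using Lemma~\ref{lem:GPSR}(ii) that the graded and completed rings have the same associated graded. Under this automorphism $f$ becomes the coordinate $t_n$, and the polynomial $g(f)$ becomes $g(t_n)=t_n^r+\alpha_{r-1}t_n^{r-1}+\cdots+\alpha_0$ with the $\alpha_j\in A$ unchanged (still homogeneous of degree $r-j$ and nilpotent). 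So it suffices to prove: such a $g(t_n)$ is a nonzero divisor in $S^{(n)}=S^{(n-1)}[[t_n]]_{\rm gr}$.

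For this one-variable reduction I would argue as follows. Write $B=S^{(n-1)}$ and work in $B[[s]]_{\rm gr}$ with $g=s^r+\alpha_{r-1}s^{r-1}+\cdots+\alpha_0$, all $\alpha_j$ nilpotent elements of $A\subset B$. Let $\mathfrak n\subset A$ be the ideal generated by the $\alpha_j$; it is a nilpotent ideal, say $\mathfrak n^{N}=0$. Consider the $\mathfrak n$-adic filtration of $B[[s]]_{\rm gr}$. Modulo $\mathfrak n$ the polynomial $g$ reduces to $\bar g=s^r$, which is visibly a nonzero divisor in $(B/\mathfrak n B)[[s]]_{\rm gr}$ because $B/\mathfrak n B$ has no $R$-torsion issues — more precisely, a power series $\sum_k c_k s^k$ times $s^r$ vanishes only if all $c_k=0$. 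Now suppose $g\cdot h=0$ for some $h\in B[[s]]_{\rm gr}$; choose $t$ maximal with $h\in \mathfrak n^t B[[s]]_{\rm gr}$ (possible since $\mathfrak n$ is nilpotent, unless $h=0$). Reducing the equation $gh=0$ modulo $\mathfrak n^{t+1}$ gives $\bar g\cdot \bar h=0$ in $(B/\mathfrak n^{t+1}B)[[s]]_{\rm gr}$ where $\bar g\equiv s^r$ modulo $\mathfrak n$; a short direct computation on coefficients (peeling off the lowest $s$-degree term of $\bar h$ and using that $s^r$ shifts degrees) forces $\bar h\equiv 0$, i.e. $h\in\mathfrak n^{t+1}B[[s]]_{\rm gr}$, contradicting maximality. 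Hence $h=0$.

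The main obstacle I anticipate is the bookkeeping in the change of variables: one must check carefully that the coordinate change straightening $f$ genuinely preserves the \emph{graded} power series ring $S^{(n)}$ and not merely its completion, and that it carries the given expression $g(f)$ to $g(s)$ with the coefficients $\alpha_j$ literally unchanged (this is where homogeneity of the substitution and of the $\alpha_j$ is essential — a non-homogeneous substitution could mix the $\alpha_j$ with $\mathbf t$). A secondary subtlety is the base case: when the linear part of $f$ has \emph{no} coefficient equal to $\pm1$ (only, say, coefficients $2$ and $3$ with $\gcd 1$), one cannot literally make $f$ a coordinate; here I would instead invoke that $f$, having a primitive integer linear part, still generates a direct summand, so that after an $SL_n(\Z)$ change of basis we reduce to the unit-coefficient case — this uses the hypothesis that no nonzero element of $R$ is a zero divisor in $A$ only at the very end, in the reduction modulo $\mathfrak n$ where one needs $s^r$ to be a nonzero divisor, which in turn just needs $B/\mathfrak n B$ to be a nonzero ring, a triviality. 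I do not expect the nilpotence argument itself to be hard; it is the standard trick that adding topologically nilpotent lower-order terms to a nonzero divisor keeps it a nonzero divisor.
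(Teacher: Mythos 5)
Your overall strategy — a formal-group change of coordinates to straighten $f$, followed by a filtration argument along the nilpotent ideal $\mathfrak n=(\alpha_0,\dots,\alpha_{r-1})$ — is attractive and, once fixed, genuinely cleaner than the paper's coefficient recursion plus delicate $(n-1)$-variable reduction. The filtration step is fine: with the filtration by "power series with all $s$-coefficients in $\mathfrak n^t B$" (which is what one really wants, rather than $\mathfrak n^t B[[s]]_{\rm gr}$), the argument $gh=0\Rightarrow s^r\bar h=0\Rightarrow \bar h=0$ closes correctly.

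However, there is a genuine gap in the reduction step, and it is precisely where the hypothesis "no nonzero element of $R$ is a zero divisor in $A$" must do real work — not, as you claim, only "trivially at the very end." You consider only the case where the linear part $\sum m_i t_i$ is \emph{primitive} (your example has coefficients $2$ and $3$ with $\gcd 1$), and in that case a $GL_n(\Z)$ change of basis does bring the coefficient vector to $(0,\dots,0,1)$. But nothing in the hypotheses forces $\gcd(m_1,\dots,m_n)=1$. If $d:=\gcd(m_i)>1$, the best a $GL_n(\Z)$ change of basis achieves is $f=[d]_F t_n$, whose linear part is $d\,t_n$. Now the filtration argument reduces you to showing that $f^r=(d\,t_n+\cdots)^r$ is a nonzero divisor in $(B/\mathfrak n B)[[t_n]]_{\rm gr}$, which requires $d^r$ — hence $d$ — to be a nonzero divisor in $A/\mathfrak n$. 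This does not follow from the hypothesis (which speaks about $A$, not $A/\mathfrak n$), and without any hypothesis the statement is simply false: take $A=\Z[\epsilon]/(\epsilon^2,2\epsilon)$ with $\epsilon$ in degree one, additive formal group law, $n=1$, $f=[2]_F t=2t$, $g(f)=f+\epsilon=2t+\epsilon$; then $(2t+\epsilon)\cdot\epsilon=0$ with $\epsilon\neq 0$.

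The repair is exactly the step the paper takes and you omitted: first invert $d$. Because $d\in R$ is a nonzero element and, by hypothesis, not a zero divisor in $A$, the natural map $S^{(n)}\to S^{(n)}[d^{-1}]$ is injective, so it suffices to prove that $g(f)$ is a nonzero divisor after localization. After inverting $d$, the IFT change of variables sends $f$ exactly to $t_n$, and your filtration argument goes through over $S^{(n)}[d^{-1}]$. So the approach is salvageable and ends up a genuinely different (and arguably tidier) route than the paper's — but as written it leaves out the localization step and misattributes the role of the torsion-freeness hypothesis.
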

\begin{proof} Since $S^{(n)}$ is a subring of the formal power series ring 
$\widehat{A[[{\bf t}]]}$, it suffices to show that $g(f)$ is a non-zero 
divisor in $\widehat{A[[{\bf t}]]}$. We can thus assume that $S^{(n)}$ is the 
formal power series ring. 

We prove the lemma by induction on $n$. We first assume that $n=1$, in which
case $S^{(1)} = \widehat{A[[t]]}$ and $f = [m]_Ft = mt + t^2f'(t)$ with 
$m \neq 0$ by ~\eqref{eqn:FG*1}. 

It follows from our assumption that every non-zero element of $R$ is a 
non-zero divisor in $S^{(1)}$. Since the map $S^{(1)} \to S^{(1)}[m^{-1}]$ is then 
an injective map of rings, it suffices to show that
$g(f)$ is not a zero divisor in $S^{(1)}[m^{-1}]$. That is, we can assume that 
$m = 1$. It follows then from Lemma~\ref{lem:IFT} that there is an
$A$-automorphism of $S^{(1)}$ which takes $f$ to $t$. Thus we can assume
that $f= t$ and $g(f) = g(t) = t^r + {\alpha}_{r-1}t^{r-1} + \cdots + 
{\alpha}_1t + {\alpha}_0$.   

Now, let $g'(t) = \stackrel{\infty}{\underset{j = 0}\sum}
a_jt^j$ be a power series in $S$. Then,
\[
\begin{array}{lll}
g(t) g'(t) = 0 & \Rightarrow &\stackrel{\infty}{\underset{j = 0}\sum}
\left(a_{j-r} + a_{j-r+1} \alpha_{r-1} + \cdots + a_{j-1}\alpha_1 + a_j\alpha_0
\right) t^j = 0 \\
& \Rightarrow & a_j = - \stackrel{r-1}{\underset{i = 0}\sum} a_{j+r-i} \alpha_{i}
\ \forall \ j \ge 0. \hspace*{3cm} (*) 
\end{array}
\]
Applying $(*)$ recursively, we see that each $a_j$ can be expressed as 
an $A$-linear combination of monomials in $\alpha_i$'s of arbitrarily large
degree in $A$. Since each $\alpha_i$ is of positive degree and nilpotent, 
we must have $a_j = 0$ for $j \ge 0$. 

To prove the general case, suppose the lemma is proven when the number
of variables is strictly less than $n$ with $n \ge 2$.
Suppose $g'({\bf t}) \in S$ is such that $g(f) g'({\bf t}) = 0$. 
If $g'({\bf t})$ is not zero, we remove those terms $a_{m({\bf t})}  m({\bf t})$ 
from its expression for which $a_{m({\bf t})} = 0$. In particular,
we get that none of the coefficients of $g'({\bf t})$ is zero. We show that this
leads to a contradiction. 

Set $m_0({\bf t}) = t_1t_2 \cdots t_n$ and write $g'({\bf t}) = 
\left(m_0({\bf t})\right)^p h({\bf t})$ such that $h({\bf t})$ is not
divisible by $m_0({\bf t})$. Then, there is a term $a_{m'({\bf t})} m'({\bf t})$ of
$h({\bf t})$ which is not divisible by at least one $t_i$. By permutation,
we can assume it is not divisible by $t_n$. 
If $m_i = 0$ for $1 \le i \le n-1$, then  $f$ is of the form
$f = [m_n]_Ft_n = m_nt_n + t^2_n{\underset{j \ge 0}\sum}b_jt^j_n$ with 
$m_n \neq 0$ by ~\eqref{eqn:FG*1}. Hence,
we are in the situation of the above one variable case with $A$ replaced by
$A[[t_1, \cdots , t_{n-1}]]$. So we assume that $m_i \neq 0$ for some $i \neq n$.
  
Since $m_0({\bf t})$ is a non-zero divisor in $S$, we must have 
$g(f)h({\bf t}) = 0$. Let $\ov{h({\bf t})}$ be the image of a power series
$h({\bf t})$ under the quotient map $S^{(n)} \surj S^{(n-1)}$. Then we get
${\ov{g(f)}} \ {\ov{h({\bf t})}} = 0$ in $S^{(n-1)}$. By our choice, the term
$a_{m'({\bf t})} m'({\bf t})$ still survives in $S^{(n-1)}$. On the other hand,
as $\ov{g(t)}$ is of the same form as $g(t)$, the induction implies that
$\ov{h({\bf t})}$ must be zero. In particular, we must have 
$a_{m'({\bf t})} = 0$. Since the coefficients of $h({\bf t})$
are same as those of $g'({\bf t})$, we arrive at a contradiction.
\end{proof}

The following is a variant of Lemma~\ref{lem:NZD} and is proved in the
similar way.
\begin{lem}\label{lem:NZDS}
Let $A$ and $S^{(n)}$ be as above such that no non-zero element of 
$R$ is a zero divisor in $A$. Let 
$f = \stackrel{n}{\underset{i=1}\sum}[m_i]_Ft_i$
be a non-zero homogeneous element of degree one in $S^{(n)}$. Let 
$v \in A$ be a homogeneous element of degree one which is nilpotent.
Then $F(f,v)$ is a non-zero divisor in $S^{(n)}$.
\end{lem}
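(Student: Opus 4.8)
The plan is to reduce the statement to Lemma~\ref{lem:NZD}, since the two assertions differ only superficially. The hypothesis in Lemma~\ref{lem:NZDS} is that $F(f,v)$ is a non-zero divisor in $S^{(n)}$, where $f = \sum_{i=1}^n [m_i]_F t_i$ is a nonzero homogeneous degree-one element and $v \in A$ is a nilpotent homogeneous degree-one element. Recalling \eqref{eqn:FGL**}, we have $F(f,v) = f + v + fv\left(\sum_{i,j \ge 1} a_{i,j} f^{i-1} v^{j-1}\right)$. The point is that $v$ is nilpotent, say $v^N = 0$, so the power series in the parenthesis is actually a polynomial in $v$ (of degree $< N-1$), and every term involving $v$ lands in the nilradical-generated part. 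Concretely, I would rewrite $F(f,v) = f + v\cdot w$ where $w = 1 + f\left(\sum_{i,j \ge 1} a_{i,j} f^{i-1} v^{j-1}\right) \in S^{(n)}$, and observe that $vw$ is a nilpotent element of $S^{(n)}$ (since $v$ is nilpotent and central, $(vw)^N = v^N w^N = 0$).

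The next step is to expand $F(f,v)$ in powers of $f$ so that it matches the shape $g(f) = f^r + \alpha_{r-1} f^{r-1} + \cdots + \alpha_0$ required by Lemma~\ref{lem:NZD}. Grouping the terms of $F(f,v) = f + v + fv\sum_{i,j\ge 1} a_{i,j} f^{i-1}v^{j-1}$ by the power of $f$: the coefficient of $f^0$ is $v$; the coefficient of $f^1$ is $1 + v\sum_{j\ge 1} a_{1,j} v^{j-1}$; and for $k \ge 2$ the coefficient of $f^k$ is $v\sum_{j\ge 1} a_{k,j} v^{j-1}$. Since $v$ is nilpotent, there are only finitely many nonzero powers of $v$, hence only finitely many of these coefficients are nonzero, so $F(f,v)$ is genuinely a polynomial in $f$ of some finite degree $r$ over $A$ — but the leading coefficient (coefficient of $f^r$) for $r\ge 2$ is of the form $v \cdot(\text{something})$, hence nilpotent, not $1$. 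So $F(f,v)$ is not directly monic in $f$. The fix: the coefficient of $f^1$ is $1 + (\text{nilpotent of positive degree})$, which is a \emph{unit} in $S^{(n)}$ (since the nilradical is contained in the Jacobson radical of a graded ring of this type, or simply: $1+x$ with $x$ nilpotent is invertible). Therefore $F(f,v) = u\cdot\big(f + \beta_{r-1}f^{r-1} + \cdots + \beta_2 f^2 + f + \beta_0\big)$ is a unit times a polynomial in $f$ whose linear coefficient is $1$; after dividing by the unit we may assume the linear coefficient is $1$ and all $\beta_j$ ($j \neq 1$) are nilpotent of positive degree.

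To finish, I would apply the inverse function theorem, Lemma~\ref{lem:IFT}, exactly as in the proof of Lemma~\ref{lem:NZD}: after inverting $m$ (the leading nonzero $m_i$, which is legitimate since no nonzero element of $R$ is a zero divisor in $A$), there is an $A$-automorphism of $\widehat{A[[{\bf t}]]}$ carrying $f$ to a single variable $t_i$; under this automorphism $F(f,v)$ becomes (a unit times) a polynomial in $t_i$ with linear coefficient $1$ and the remaining coefficients nilpotent of positive degree. This is precisely a polynomial of the type handled in Lemma~\ref{lem:NZD} (indeed the argument there, via the recursion $(*)$ and the induction on the number of variables, applies verbatim — the only structural input is that all coefficients except a unit one are nilpotent of positive degree), so it is a non-zero divisor in $\widehat{A[[{\bf t}]]}$, hence in $S^{(n)}$. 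The one point requiring a little care — and the main technical obstacle — is verifying that $v$ being nilpotent indeed forces $F(f,v)$ to be a polynomial in $f$ of bounded degree with the stated nilpotency and degree conditions on the coefficients, and that dividing out the unit $u$ does not disturb the degree-one homogeneity; but both are direct consequences of \eqref{eqn:FGL**} and \eqref{eqn:FG*1}, so I would simply remark that the argument is parallel to that of Lemma~\ref{lem:NZD} and indicate these modifications.
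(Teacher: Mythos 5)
There is a genuine gap in your reduction, and it occurs at precisely the step you flag as ``requiring a little care.'' You claim that because $v$ is nilpotent, $F(f,v)$ is a \emph{polynomial} in $f$ of some bounded degree $r$, and you then try to match the resulting expression to the shape $g(f) = f^r + \alpha_{r-1}f^{r-1} + \cdots + \alpha_0$ of Lemma~\ref{lem:NZD}. This is false. Nilpotence of $v$ only truncates the inner sum over $j$ in $\sum_{i,j\ge 1} a_{i,j}f^{i-1}v^{j-1}$; it does \emph{not} truncate the sum over $i$. Concretely, the coefficient of $f^k$ in $F(f,v)$ for $k\ge 2$ is $v\sum_{j\ge 1} a_{k,j}v^{j-1}$, which is a finite sum for each $k$ but is generically nonzero for \emph{every} $k$ (the constants $a_{k,j}$ vary with $k$). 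So $F(f,v)$ remains an honest infinite power series in $f$, and the reduction to a monic polynomial in $f$ with nilpotent lower coefficients never materializes. Note also that even if one could truncate, your normalized expression would have a \emph{nilpotent} leading coefficient with a unit linear coefficient, which is a different shape from the monic polynomial that Lemma~\ref{lem:NZD} actually handles. The same difficulty persists after your inverse-function-theorem change of variables $f\mapsto t_i$: applying an $A$-automorphism to a power series in $f$ produces a power series in $t_i$, not a polynomial.

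The paper avoids this issue by a different factorization. Writing $F(f,v) = f + v\bigl(1 + fF'(f,v)\bigr)$ and observing that $1 + fF'$ is a unit (it is $1$ plus terms in the augmentation ideal), one factors $F(f,v) = (1+fF')\cdot(uf + v)$ with $u = (1+fF')^{-1}$, a unit of degree zero. Since multiplication by a unit does not affect being a non-zero divisor, it suffices to treat $uf + v$. Now $uf$, although a power series, still has the form $t + t^2\cdot(\text{power series})$ in the one-variable case after inverting $m$, so Lemma~\ref{lem:IFT} furnishes an $A$-automorphism taking $uf$ to $t$; since the automorphism is $A$-linear it fixes $v$, and one is reduced to showing $t + v$ is a non-zero divisor, which is exactly the case $r=1$, $\alpha_0 = v$ of Lemma~\ref{lem:NZD}. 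The induction on $n$ for the general case then proceeds as in Lemma~\ref{lem:NZD}. The essential trick you are missing is to absorb the power-series tail into a unit multiplying $f$ \emph{linearly}, rather than trying to truncate the series into a polynomial of higher degree.
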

\begin{proof} The proof is exactly along the same lines as that of
Lemma~\ref{lem:NZD}. We give the main steps. We can assume as before that 
$S^{(n)}$ is the formal power series ring. It follows from ~\eqref{eqn:FGL**}
that $F(f, v)$ is of the form $f + v\left(1 + fF'(f,v)\right)$, where
$1 + fF'(f,v)$ is an invertible element. In particular,
we can write $F(f, v) = uf + v$, where 
$u = \left(1 - fF' + f^2 {F'}^2 - \cdots\right)$
is an invertible element in $S^{(n)}$. As before, we prove by
induction on $n$. In case of $n =1$, we can write $S^{(1)} = A[[t]]$  and
$f = mt + t^2f'(t)$. We can further assume that $m =1$.  
In particular, we get $uf = t + t^2f''(t)$. It follows from Lemma~\ref{lem:IFT}
that there an $A$-automorphism of $S^{(1)}$ which takes $uf$ to $t$.
This reduces to showing that $t+v$ is a non-zero divisor in $S^{(1)}$,
which is shown in Lemma~\ref{lem:NZD}.

Now suppose the result holds for all $m \le n-1$ with $n \ge 2$.
If $m _i = 0$ for $1 \le i \le n-1$, then $f = [m_n]_Ft_n$ with $m_n \neq 0$
and hence we are in the situation of one variable case. So we can
assume that $m_i \neq 0$ for some $i \neq n$. Since the image $\ov{f}$ of $f$ 
under the natural surjection $S^{(n)} \surj S^{(n-1)}$ is of the same form as 
that of $f$ and $\ov{f} \neq 0$, we argue as in the proof of Lemma~\ref{lem:NZD}
to conclude that $F(f, v)$ is a non-zero divisor in $S^{(n)}$.
\end{proof}

\begin{lem}\label{lem:elem1}
Let $T$ be split torus of rank $n$ and let $\{\chi_1, \cdots , \chi_s\}$ be 
set of characters of $T$ such that for $j \neq j'$, the set
$\{\chi_j, \chi_{j'}\}$ is a part of a basis of $\widehat{T}$.
Let ${\gamma}_j = \left(c^T_1(L_{\chi_j})\right)^{d_j}$ with $d_j \ge 0$.
Then 
\[
\left({\gamma}_1 \cdots {\gamma}_s\right) \ = \
\stackrel{s}{\underset{j=1}{\bigcap}} \left({\gamma}_j\right)
\]
as ideals in $S(T)$.
\end{lem}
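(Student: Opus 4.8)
The inclusion $(\gamma_1 \cdots \gamma_s) \subseteq \bigcap_{j=1}^s (\gamma_j)$ is trivial, so the content is the reverse one. After dropping the indices $j$ with $d_j = 0$ (for which $\gamma_j = 1$, leaving both sides unchanged) we may assume $d_j \ge 1$ for all $j$. Recall from \eqref{eqn:CBT*} that any $\Z$-basis of $\widehat T$ furnishes an isomorphism of $S(T)$ with the graded power series ring over $\bL$ in the corresponding first Chern classes; since $\bL$ is a polynomial ring over $\Z$ it is an integral domain, and $S(T)$ embeds into the ordinary formal power series ring over $\bL$ by Lemma~\ref{lem:GPSR}(i), so $S(T)$ is a domain.

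The heart of the matter is the claim that for $j \ne j'$ the element $c^T_1(L_{\chi_j})$ is a non-zero divisor in $S(T)/\bigl(c^T_1(L_{\chi_{j'}})^{d_{j'}}\bigr)$. Granting this, $\gamma_j = c^T_1(L_{\chi_j})^{d_j}$ is also a non-zero divisor there (a power of a non-zero divisor is one), and I would finish by induction on $s$: the case $s=1$ is vacuous, and for $s \ge 2$ and $x \in \bigcap_{j=1}^s (\gamma_j)$ the inductive hypothesis (applied to $\chi_1, \dots, \chi_{s-1}$, which still satisfy the pairwise-basis condition) gives $x = \gamma_1 \cdots \gamma_{s-1}\,y$; reducing modulo $\gamma_s$ and using $x \in (\gamma_s)$ shows that $\gamma_1 \cdots \gamma_{s-1}$ annihilates $\bar y$ in $S(T)/(\gamma_s)$, but $\gamma_1 \cdots \gamma_{s-1}$ is a non-zero divisor there as a product of such, whence $y = \gamma_s z$ and $x \in (\gamma_1 \cdots \gamma_s)$.

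To prove the claim, I would use the hypothesis to extend $\{\chi_j, \chi_{j'}\}$ to a $\Z$-basis of $\widehat T$; under the resulting identification of \eqref{eqn:CBT*} one has $S(T) \cong B[[y]]$ (graded power series, by Lemma~\ref{lem:GPSR}(iii)), where $y = c^T_1(L_{\chi_{j'}})$ and $B$ is the graded power series ring over $\bL$ in the remaining Chern classes, one of which is $c^T_1(L_{\chi_j})$. Then $S(T)/(y^{d_{j'}})$ is free as a $B$-module with basis $1, y, \dots, y^{d_{j'}-1}$, and multiplication by the element $c^T_1(L_{\chi_j}) \in B$ is injective on it because $B$, being a subring of a formal power series ring over the domain $\bL$, is itself a domain. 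I do not anticipate any real obstacle: the only care needed is that the graded power series rings involved are genuine domains (Lemma~\ref{lem:GPSR}(i) together with $\bL$ being a domain) and that reindexing characters as coordinate variables is exactly \eqref{eqn:CBT*}. One could alternatively deduce the claim from Lemma~\ref{lem:NZD} with $g(f) = f^{d_{j'}}$ over the base $B$, but the direct argument is shorter.
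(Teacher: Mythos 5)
Your proof is correct and takes essentially the same approach as the paper: reduce to the assertion that $\gamma_{j'}$ is a non-zero divisor modulo $\gamma_j$ for $j \neq j'$, then prove it by extending $\{\chi_j,\chi_{j'}\}$ to a basis of $\widehat T$, identifying $S(T)$ with a graded power series ring, and splitting off the variable $c^T_1(L_{\chi_{j'}})$. The paper phrases this last step as a division-with-remainder argument in $A[[t_1]]$ rather than via the free-module structure of $S(T)/(y^{d_{j'}})$ over $B$, but the two are the same computation.
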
  
\begin{proof} By ignoring those $j \ge 1$ such that $d_j = 0$, we can
assume that $d_j \ge 1$ for all $j$.
Using a simple induction, it suffices to show that for 
$j \neq j'$, the relation ${\gamma}_j | q{\gamma}_{j'}$  
implies that ${\gamma}_j | q$. So we can assume that $s =2$.
By assumption, we can extend $\{\chi_1, \chi_2\}$ to a basis 
$\{\chi_1, \cdots , \chi_n\}$ of $\widehat{T}$. In particular, we
can identify $S(T)$ with the graded power series ring $\bL[[t_1, \cdots , t_n]]$
such that $t_j = c^T_1(L_{\chi_j})$ ({\sl cf.} ~\eqref{eqn:CBT*}).
We can thus write $\gamma_j = t^{d_j}_j$ for $j =1, 2$.

We now write $S(T)$ as $A[[t_1]]$ where $A = \bL[[t_2, \cdots , t_n]]$
({\sl cf.} Lemma~\ref{lem:GPSR}) and suppose that $p\gamma_2 = q\gamma_1$
in $S(T)$. We can write $p = s\gamma_1 + r$, where $r$ is a polynomial in
$t_1$ with coefficients in $A$ of degree less than $d_1$.

We now claim that $q- s\gamma_2 = 0$, which will finish the proof of the
lemma. Set $g(t_1) = q- s\gamma_2 = {\underset{j \ge 0}\sum} a_jt^j_1$.
This yields $g(t_1)\gamma_1 = \stackrel{\infty}{\underset{j = 0}\sum}
a_j t^{d_1 + j}_1$. Since $g(t_1)\gamma_1 = r \gamma_2$ is of degree less than 
$d_1$ in $t_1$, we see that $a_j = 0$ for all $j \ge 0$. 
\end{proof}

\section{Chern classes of equivariant bundles}\label{section:Chern-C}
We now apply the above algebraic results to deduce some consequences
for the Chern classes of the equivariant vector bundles on smooth varieties
with a torus action. We need the following
description of the equivariant cobordism for the trivial action of a torus
which follows essentially from the definitions.
Recall that for a split torus $T$ of rank $n$, $S =
\bL[[t_1, \cdots , t_n]]$ denotes the cobordism ring of the classifying space
of $T$.

\begin{lem}\label{lem:trivial-T}
Let $T$ be a split torus of rank $n$ acting trivially on a smooth
variety $X$ of dimension $d$ and let $\{\chi_1, \cdots , \chi_n\}$ be a chosen
basis of $\widehat{T}$. Then the assignment $t_i \mapsto c^T_1(L_{\chi_i})$
induces an  isomorphism of graded rings
\begin{equation}\label{eqn:trivial-T1}
\Omega^*(X)[[t_1, \cdots , t_n]] \xrightarrow{\cong} \Omega_T^*(X).
\end{equation}
\end{lem}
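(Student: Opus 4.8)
The plan is to build the isomorphism using the defining good pairs for $T$ and reduce everything to the non-equivariant projective bundle formula. Since $T$ acts trivially on $X$, we choose a sequence of good pairs $\{(V_j,U_j)\}_{j\ge 0}$ of the kind guaranteed by Theorem~\ref{thm:NO-Niveu}, with the additional standard choice that $V_j = \bigoplus_{i=1}^n V_j^{(i)}$ where $T$ acts on $V_j^{(i)}$ through the character $\chi_i$, so that $U_j/T \cong (\P^{j-1}_k)^n$ and, more importantly, $X \stackrel{T}{\times} U_j \cong X \times (U_j/T)$ since the action on $X$ is trivial. Thus $X \stackrel{T}{\times} U_j$ is a tower of projective bundles over $X$: iteratively, $X \times (\P^{N}_k)^{i}$ is a $\P^N$-bundle over $X \times (\P^N_k)^{i-1}$, where the relevant line bundle on the $i$-th factor restricts to $\sO(1)$ and its first Chern class is (the restriction of) $c^T_1(L_{\chi_i})$.

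First I would invoke the projective bundle formula for $\Omega^*$ (Levine--Morel) $n$ times to get, for each $j$, an isomorphism of graded rings
\[
\Omega^*(X)[t_1,\dots,t_n]/\bigl(P^{(j)}_1(t_1),\dots,P^{(j)}_n(t_n)\bigr)
\xrightarrow{\ \cong\ } \Omega^*\bigl(X \stackrel{T}{\times} U_j\bigr),
\]
where $P^{(j)}_i$ is the degree-$j$ relation coming from the $\P^{j-1}$-bundle structure of the $i$-th factor (the cobordism-theoretic Euler class of the tautological quotient, a monic polynomial of degree $j$ in $t_i$ with coefficients in $\bL$), and $t_i$ maps to $c^T_1(L_{\chi_i})$ restricted to the $j$-th stage. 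Here I use that for the coniveau/niveau filtration on these smooth projective-bundle towers, passing to the quotient by $F^j$ in \eqref{eqn:Csmooth1} precisely kills the part generated by monomials of $t$-degree $\ge j$ in each variable — more carefully, one checks that under the projective bundle isomorphism the subgroup $F^j\Omega^*(X\stackrel{T}{\times}U_j)$ corresponds to the ideal generated by the $P^{(j)}_i$ together with the higher powers, exactly as in the analogous computation of $\Omega^*(BT)$ in \cite[Proposition~6.5]{Krishna4}. Then I would take the inverse limit over $j$ using Theorem~\ref{thm:NO-Niveu} (equivalently the description \eqref{eqn:Csmooth1}): the left-hand sides form an inverse system whose limit, by the very definition of the graded power series ring and Lemma~\ref{lem:GPSR}(vi), is $\Omega^*(X)[[t_1,\dots,t_n]]$; the right-hand sides have inverse limit $\Omega^*_T(X)$. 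Compatibility of the maps $t_i \mapsto c^T_1(L_{\chi_i})$ across the tower is immediate since the equivariant line bundle $L_{\chi_i}$ is defined once and for all and restricts compatibly.

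The main obstacle is the bookkeeping identifying the niveau filtration under the iterated projective bundle isomorphism — i.e. verifying that the ``truncation'' implicit in $\Omega^T_*(X)_j$ matches the algebraic truncation that produces graded power series rather than polynomials in the limit. This is essentially the content of why $\Omega^*(BT) \cong \bL[[t_1,\dots,t_n]]$ rather than $\bL[t_1,\dots,t_n]$, and I would handle it by the same argument as \cite[Proposition~6.5]{Krishna4}, now relativized over the smooth base $X$: the key input is that for a $\P^{N}$-bundle $E \to W$ over a smooth variety, $F^{p}\Omega^*(E)$ is generated over $F^{p-*}\Omega^*(W)$ by the powers of the relative hyperplane class together with $\xi^{N+1}$-type relations, so truncating at level $j$ and letting $N=j-1 \to \infty$ produces formal power series in $t_i$ with coefficients drawn from the completion dictated by the (trivial, in the $X$-directions) niveau filtration on $X$ — which for the fixed smooth $X$ contributes nothing extra, leaving exactly $\Omega^*(X)[[t_1,\dots,t_n]]$. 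Everything else — that the maps are ring homomorphisms (the products on both sides are computed compatibly, using Theorem~\ref{thm:Basic}(vi) on the equivariant side and the ring structure of the projective bundle formula on the other) and that the grading is respected — is routine.
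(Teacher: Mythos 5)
Your proposal follows essentially the same route as the paper: choose good pairs so that $X\stackrel{T}{\times}U_j \cong X\times(\P^{j-1}_k)^n$ (possible since the action is trivial), apply the projective bundle formula to get $\Omega^*(X)[t_1,\dots,t_n]/(t_1^j,\dots,t_n^j)$ at each level, and pass to the inverse limit to land on the graded power series ring. The only superfluous part is your detour through the niveau filtration and \eqref{eqn:Csmooth1}: your chosen sequence of good pairs already satisfies the hypotheses of Theorem~\ref{thm:NO-Niveu}, so the filtration quotients never need to be analyzed — one takes the inverse limit of the cobordism groups directly, exactly as the paper does.
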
   
\begin{proof} 
This is a direct application of the projective bundle formula in the
non-equivariant cobordism and is similar to the calculation of
$\Omega^*_T(k)$ ({\sl cf.} \cite[Example~6.4]{Krishna4}). We give the sketch. 

The chosen basis $\{\chi_1, \cdots , \chi_n\}$ of $\widehat{T}$
equivalently yields a decomposition $T = T_1 \times  \cdots  \times T_n$ 
with each $T_i$ isomorphic to $\G_m$ such that $\chi_i$ is a generator of 
$\widehat{T_i}$.
Let $L_{\chi}$ be the one-dimensional representation of $T$, where $T$ acts via 
$\chi$. For any $j \ge 1$, we take the good pair $(V_j, U_j)$ such that
$V_j = \stackrel{n}{\underset{i = 1} \prod} 
L^{\oplus j}_{\chi_i}$, $U_j = \stackrel{n}{\underset{i = 1} \prod} 
\left(L^{\oplus j}_{\chi_i} \setminus \{0\}\right)$ and 
$T$ acts on $V_j$ by $(t_1, \cdots , t_n)(x_1, \cdots , x_n)
= \left(\chi_1(t_1)(x_1), \cdots , \chi_n(t_n)(x_n)\right)$. 
Since $T$ acts trivially on $X$, it is easy to see that
$X \stackrel{T}{\times} U_j \cong X \times \left(X_1 \times \cdots \times 
X_n\right)$ with each $X_i$ isomorphic to $\P^{j-1}_k$.
Moreover, the $T$-line bundle $L_{\chi_i}$ gives the line bundle 
$L_{\chi_i} \stackrel{T_i}{\times} \left(L^{\oplus j}_{\chi_i} \setminus \{0\}\right)
\to X_i$ which is $\sO(\pm 1)$. Letting $\zeta_i$ be the first
Chern class of this line bundle, it follows from the projective bundle formula 
for the non-equivariant cobordism and Theorem~\ref{thm:NO-Niveu} that
\[
\Omega^i_{T}(X) = {\underset{p_1, \cdots , p_n  \ge 0} \prod} 
\Omega^{i-(\stackrel{n}{\underset{i=1} \sum} p_i)}(X) \zeta^{p_1}_1 \cdots \zeta^{p_n}_n,
\]
which is isomorphic to the set of formal power series in 
$\{\zeta_1, \cdots , \zeta_n\}$ of degree $i$ with coefficients in 
$\Omega^*(X)$. The desired isomorphism of ~\eqref{eqn:trivial-T1} is easily 
deduced from this.
\end{proof}

Let $T$ be a split torus of rank $n$ and let $\{\chi_1, \cdots , \chi_n\}$
be a chosen basis of the character group $M = \widehat{T}$. We have then seen 
that there is a
graded ring isomorphism $S = \Omega^*_T(k) \cong \bL[[t_1, \cdots , t_n]]$,
where $t_j = c^T_1(L_{\chi_j})$ for $1 \le j \le n$. It follows from
~\eqref{eqn:FGL} and the formula $c^T_1(L_1 \otimes L_2) = 
c^T_1(L_1) +_F c^T_1(L_2)$ 
that $c^T_1(L_{\chi^m_j}) = mt_j + t^2_j {\underset{i \ge 0}\sum} a_i t^i_j$.
In particular, we have for a character
$\chi = \stackrel{n}{\underset{j = 1}\prod} \chi^{m^j}_j$, 
\begin{equation}\label{eqn:F-Chern}
c^T_1(L_{\chi}) \ = \ \stackrel{n}{\underset{j =1}\sum} [m_j]_Ft_j \ = \ \ \ \
\stackrel{n}{\underset{j =1}\sum} m_jt_j +
{\underset{|m({\bf t})| \ge 2}\sum} a_{m({\bf t})} m({\bf t}).
\end{equation}
Let $S(T)[M^{-1}]$ denote the ring obtained by inverting all non-zero 
linear forms $\stackrel{n}{\underset{j =1}\sum} m_jt_j$ in $S(T)$.
Then $S(T)[M^{-1}]$ is also a graded ring and letting $f = 
\stackrel{n}{\underset{j =1}\sum} m_jt_j$ in ~\eqref{eqn:F-Chern},
we can write
\begin{equation}\label{eqn:F-Chern1}
c^T_1(L_{\chi}) \ = f \left(1 + f^{-1}
{\underset{|m({\bf t})| \ge 2}\sum} a_{m({\bf t})} m({\bf t})\right).
\end{equation}
in $S(T)[M^{-1}]$. Notice then that the term inside the parenthesis is
a homogeneous element of degree zero in $S(T)[M^{-1}]$ which is invertible.
We conclude in particular that $c^T_1(L_{\chi})$ is an invertible element
of $S(T)[M^{-1}]$. For a smooth $k$-scheme $X$ with an action of a split torus 
$T$, we shall write $\Omega^*_T(X) \otimes_S S[M^{-1}]$ as $\Omega_T^*(X)[M^{-1}]$.

\begin{lem}[Splitting Principle]\label{lem:SPLIT}
Let $T$ be a split torus of rank $n$ acting trivially on a smooth scheme $X$
and let $\{E_1, \cdots, E_s\}$ be a finite collection of vector bundles on $X$.
Then there is a smooth morphism $p: Y \to X$ which is a composition of
affine and projective bundle morphisms such that \\
$(i)$ \ $p^*(E_i)$ is a direct sum of line bundles for $1 \le i \le s$, \\
$(ii)$  the pull-back map $p^*: \Omega^*_T(X) \to \Omega^*_T(Y)$ is split
injective, where $T$ acts trivially on $Y$, and \\  
$(iii)$  the map $\frac{\Omega^*_T(X)}{\left(c^T_i(E)\right)} \to 
\frac{\Omega^*_T(Y)}{\left(p^*\left(c^T_i(E)\right)\right)}$ is split injective 
for any $T$-equivariant vector bundle $E$ on $X$ and any $i \ge 0$.
\end{lem}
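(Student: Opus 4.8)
The plan is to reduce the statement to two standard geometric constructions --- flag bundles and affine bundles of splittings --- exploiting that, since $T$ acts trivially on $X$, Lemma~\ref{lem:trivial-T} identifies $\Omega^*_T(-)$ on smooth schemes with trivial $T$-action with the graded power series ring $\Omega^*(-)[[t_1,\dots,t_n]]$. Consequently the projective bundle formula and homotopy invariance for ordinary algebraic cobordism carry over to $\Omega^*_T$ on such schemes merely by extending scalars along $\bL \to S$. It suffices to treat a single bundle $E$ of rank $r$ and then iterate: once a morphism $p_1\colon Y_1 \to X$ with the required properties for $E_1$ has been produced, one applies the same construction to $p_1^*E_2$ over $Y_1$, and so on; the composite $Y = Y_s \to \cdots \to Y_1 \to X$ is again a composition of affine and projective bundle morphisms, and ``split injective with an $\Omega^*_T$-linear retraction'' is stable under composition.

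For a single $E$, first I would take the flag bundle $\pi\colon \mathrm{Fl}(E) \to X$, which is by construction a composition of projective bundles and carries a tautological complete flag of subbundles $0 = F_0 \subset F_1 \subset \cdots \subset F_r = \pi^*E$ with line bundle subquotients $L_m = F_m/F_{m-1}$. Applying the (equivariant) projective bundle formula stage by stage shows that $\pi^*$ is split injective and, more to the point, admits an $\Omega^*_T(X)$-linear retraction $\rho_\pi$ with $\rho_\pi\circ\pi^* = \mathrm{id}$, obtained by composing the ``coefficient of $1$'' projections at the successive projectivizations. Next I would form the $\mathrm{Fl}(E)$-scheme $a\colon Z \to \mathrm{Fl}(E)$ parametrizing direct sum decompositions of $\pi^*E$ refining this flag; since a short exact sequence of locally free sheaves splits Zariski-locally, $Z$ is a torsor under a vector bundle over $\mathrm{Fl}(E)$, i.e.\ an affine bundle, so $a^*$ is an isomorphism by homotopy invariance, and on $Z$ the tautological decomposition identifies $(\pi a)^*E$ with $\bigoplus_m a^*L_m$. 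Setting $p = \pi\circ a$ gives (i); moreover $p$ is a composition of affine and projective bundle morphisms, and $\rho_p := \rho_\pi\circ(a^*)^{-1}$ is an $\Omega^*_T(X)$-linear retraction of $p^*$, which gives (ii).

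Part (iii) I would then deduce purely formally. With $\Omega^*_T(Y)$ regarded as an $\Omega^*_T(X)$-module via $p^*$, we have from (ii) an $\Omega^*_T(X)$-linear retraction $\rho_p$ of $p^*$. Given any $T$-equivariant bundle $E$ on $X$ and any $i \ge 0$, put $c = c^T_i(E) \in \Omega^*_T(X)$; then $p^*$ descends to $\overline{p^*}\colon \Omega^*_T(X)/(c) \to \Omega^*_T(Y)/(p^*c)$, and since $\rho_p(p^*c\cdot y) = c\cdot\rho_p(y)$ the retraction $\rho_p$ maps the ideal $(p^*c)$ into $(c)$, hence descends to $\overline{\rho_p}$ with $\overline{\rho_p}\circ\overline{p^*} = \mathrm{id}$. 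Thus the map in (iii) is split injective, uniformly in $E$ and $i$.

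I expect the only real obstacle to be the middle step: verifying that the scheme of splittings is genuinely an affine bundle (a torsor under a vector bundle) and that algebraic cobordism is homotopy invariant for affine bundles --- this goes a little beyond the vector bundle case recorded in Theorem~\ref{thm:Basic}(iii), but follows from the homotopy property of \cite{LM} together with Zariski Mayer--Vietoris and Noetherian induction --- and checking that the projective bundle retractions really are linear over $\Omega^*_T(X)$, since that linearity is exactly what legitimizes the formal deduction of (iii). The equivariant bookkeeping is otherwise routine, the $T$-action on everything in sight being trivial.
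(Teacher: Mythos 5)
Your proof is correct and follows essentially the same route as the paper's: reduce to a single bundle, split it via a tower of projective and affine bundle morphisms, and produce an $\Omega^*_T(X)$-linear retraction of $p^*$ from the (equivariant) projective bundle formula, which then descends to the quotients for (iii) by linearity. The paper organizes the splitting step differently (citing Panin's one-step projectivize-then-affine-bundle lemma iteratively, rather than a full flag bundle followed by one affine torsor of splittings) and uses the explicit retraction $z\mapsto p_*(\zeta^r\cdot z)$ via the projection formula rather than your coefficient-of-$1$ projection, but these are interchangeable choices; the substance and the key linearity observation for (iii) coincide.
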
 
\begin{proof}
If $E = E_1$ is a single vector bundle on $X$, then it is shown in
\cite[Lemma~3.24]{Panin} that there are maps $X_2 \xrightarrow{p_2} X_1
\xrightarrow{p_1} X$ such that $p_1$ is the natural projection
$X_1 = \P(E) \to X$ and $p_2$ is an affine bundle map. 
Furthermore, $(p_1 \circ p_2)^*(E) = E' \oplus E''$, where $E'$ is a line
bundle. By repeating this process finitely many times, we see that there
is a smooth map $p : Y \to X$ which is composition of projective and affine
bundle morphisms and such that the condition $(i)$ of the lemma holds
on $Y$. 

Letting $T$ act trivially on all the intermediate schemes mapping to $X$, we 
see that all the intermediate maps are $T$-equivariant. Since an affine bundle
keeps the (equivariant) cobordism rings invariant, we only need to show 
$(ii)$ and $(iii)$ when $p : Y \to X$ is a projective bundle morphism.

So let $Y = \P(V) \xrightarrow{p} X$ be a such a projective bundle of relative 
dimension $r$ and let $\zeta \in \Omega^1(Y)$ be the first Chern class of the
tautological bundle $\sO_Y(-1)$ on $Y$. Then $\zeta = c^T_1\left(\sO_Y(-1) 
\otimes L_{\chi_0}\right)$, where $\chi_0$ is the trivial character of $T$.
It also follows from Lemma~\ref{lem:trivial-T} that $p^*$ is the natural
map of the graded power series rings $p^* : \Omega^*(X)[[t_1, \cdots , t_n]]
\to \Omega^*(Y)[[t_1, \cdots , t_n]]$. Moreover, the projective bundle
formula for the ordinary cobordism and the projection formula for the
equivariant cobordism ({\sl cf.} Theorem~\ref{thm:Basic} $(vii)$) imply that 
\begin{equation}\label{eqn:PNE}
p_*\left(\zeta^r \cdot p^*(x)\right) =  x \cdot p_*(\zeta^r) = x
\end{equation}
for any $x \in \Omega^*_T(X)$. This immediately implies the condition
$(ii)$ of the lemma. 

To prove the final assertion, we first deduce from the standard
properties of the Chern classes that $p^*\left(c^T_i(E)\right) =
c^T_i\left(p^*(E)\right)$ for any $T$-equivariant vector bundle $E$ on $X$. 
Setting $x = c^T_i(E) \in \Omega^*_T(X)$ and $y = c^T_i\left(p^*(E)\right)$,
we conclude that $p^*$ induces the map 
\[
\ov{p^*} : \frac{\Omega^*_T(X)}{\left(x\right)} \to 
\frac{\Omega^*_T(Y)}{\left(y\right)}.
\]
Let $q_* : \Omega^*_T(Y) \to \Omega^*_T(X)$ be the map 
$q(z) = p_*\left(\zeta^r \cdot z\right)$. Then $q_*$ is clearly 
$\Omega^*_T(X)$-linear. Moreover, it follows from the projection formula
and ~\eqref{eqn:PNE} that $q_*$ induces maps
\[
\frac{\Omega^*_T(X)}{\left(x\right)} \xrightarrow{\ov{p^*}}
\frac{\Omega^*_T(Y)}{\left(y\right)} \xrightarrow{\ov{q_*}}
\frac{\Omega^*_T(X)}{\left(x\right)} 
\]
such that the composite is identity. This completes the proof of the lemma.
\end{proof}

\noindent
{\bf Notation:} All results in the rest of this paper will be proven with
the rational coefficients. In order to simplify our notations, an abelian
group $A$ from now on will actually mean the $\Q$-vector space 
$A \otimes_{\Z} \Q$, and an inverse limit of abelian groups will mean the
limit of the associated $\Q$-vector spaces. In particular, all cohomology groups
will be considered with the rational coefficients and 
$\Omega^G_i(X) = {\underset{j}\varprojlim} \ {\Omega^G_i(X)}_j$,
where each ${\Omega^G_i(X)}_j$ at $j$th level is the non-equivariant cobordism 
group with rational coefficients.

\begin{lem}\label{lem:RIGID*}
Let $T$ be a split torus of rank $n$ acting trivially on a smooth variety $X$ 
and let $E$ be a $T$-equivariant vector bundle of rank $d$ on $X$ 
which is a direct sum of line bundles. Assume that
in the eigenspace decomposition of $E$ with respect to 
$T$, the submodule corresponding to the trivial character is zero.
Then $c^T_d(E)$ is a non-zero divisor in $\Omega^*_T(X)$ and is invertible
in $\Omega^*_T[M^{-1}]$.
\end{lem}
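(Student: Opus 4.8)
The plan is to reduce to the case of a single equivariant line bundle and then to the structure of $S$ as a graded power series ring, where the algebraic lemmas \lemref{lem:NZD} and \lemref{lem:NZDS} of the previous section do the work. Write $E = L_1 \oplus \cdots \oplus L_d$ as a sum of $T$-equivariant line bundles on $X$ with trivial $T$-action. Since each $L_i$ carries a $T$-linearization and $T$ acts trivially on the base, each $L_i$ decomposes as $L_i = M_i \otimes L_{\chi_i}$, where $M_i$ is an (ordinary) line bundle on $X$ and $\chi_i \in \widehat{T}$ is the character giving the $T$-action on $L_i$; by the hypothesis on the eigenspace decomposition, $\chi_i \neq 0$ for all $i$. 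By the Whitney sum formula (\thmref{thm:Basic}$(iv)$) we have $c^T_d(E) = \prod_{i=1}^d c^T_1(L_i)$ in the ring $\Omega^*_T(X)$, so it suffices to show each factor $c^T_1(L_i) = c^T_1(M_i \otimes L_{\chi_i})$ is a non-zero divisor; the product of non-zero divisors in a commutative ring is again a non-zero divisor. Using the formal group law \eqref{eqn:FGL} together with \lemref{lem:trivial-T}, which identifies $\Omega^*_T(X)$ with the graded power series ring $\Omega^*(X)[[t_1, \dots, t_n]]$, we get
\[
c^T_1(M_i \otimes L_{\chi_i}) = c_1(M_i) +_F c^T_1(L_{\chi_i}) = F\bigl(c_1(M_i),\, f_i\bigr),
\]
where $f_i = \sum_{j=1}^n [m^i_j]_F t_j$ is the degree-one element of $S \subset \Omega^*_T(X)$ attached to $\chi_i$ via \eqref{eqn:F-Chern}, and $f_i \neq 0$ since $\chi_i \neq 0$.

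Now I would apply \lemref{lem:NZDS} with the graded $\bL$-algebra $A = \Omega^*(X)$ (here $R = \bL$, and no non-zero element of $\bL$ is a zero divisor in $\Omega^*(X)$ because $\Omega^*(X)$ is a free $\bL$-module when $X$ is smooth — but more directly, the niveau/coniveau filtration and the structure results give $\Omega^*(X)$ as an extension with $\bL$-free subquotients, so $\bL \to \Omega^*(X)$ is injective and $\bL$-torsion-free; alternatively one reduces to $X$ projective via a smooth compactification and \corref{cor:FIXEDPOINTS}). The element $v = c_1(M_i) \in A = \Omega^*(X)$ is homogeneous of degree one. The only point needing care is that $v$ must be \emph{nilpotent}: this is exactly the statement that the first Chern class of an ordinary line bundle on a smooth variety is nilpotent in $\Omega^*(X)$, which follows from the fact that $\Omega^*(X)$ is generated in a bounded range of degrees (or: $c_1(M_i)^{N} = 0$ for $N > \dim X$ by dimension reasons in the Levine–Morel theory). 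Granting this, \lemref{lem:NZDS} gives that $F(f_i, v) = F(v, f_i) = c^T_1(L_i)$ is a non-zero divisor in the power series ring $S^{(n)} = \Omega^*(X)[[t_1, \dots, t_n]] \cong \Omega^*_T(X)$, for each $i$, and hence $c^T_d(E) = \prod_i c^T_1(L_i)$ is a non-zero divisor in $\Omega^*_T(X)$.

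For the second assertion, invertibility of $c^T_d(E)$ in $\Omega^*_T(X)[M^{-1}] = \Omega^*_T(X) \otimes_S S[M^{-1}]$, I would again argue factor by factor. Fix $i$ and let $g_i = \sum_{j=1}^n m^i_j t_j \in S$ be the linear form underlying $\chi_i$, which becomes invertible in $S[M^{-1}]$ by construction. By \eqref{eqn:F-Chern1} we have $c^T_1(L_{\chi_i}) = g_i \cdot u_i$ with $u_i$ a homogeneous unit of degree zero in $S[M^{-1}]$; hence $c^T_1(L_{\chi_i})$ is a unit in $S[M^{-1}] \subset \Omega^*_T(X)[M^{-1}]$. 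Then
\[
c^T_1(L_i) = F\bigl(c_1(M_i), c^T_1(L_{\chi_i})\bigr) = c^T_1(L_{\chi_i}) \cdot \bigl(1 + c^T_1(L_{\chi_i})^{-1} c_1(M_i)\, w\bigr)
\]
for a suitable $w$, using \eqref{eqn:FGL**}; since $c_1(M_i)$ is nilpotent and $c^T_1(L_{\chi_i})^{-1}$ exists, the parenthesized factor is $1$ plus a nilpotent element, hence a unit. Therefore each $c^T_1(L_i)$ is a unit in $\Omega^*_T(X)[M^{-1}]$, and so is their product $c^T_d(E)$.

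The main obstacle, and the step I would be most careful about, is the nilpotence of $c_1(M_i)$ in $\Omega^*(X)$ for non-projective smooth $X$ — this is what makes \lemref{lem:NZDS} applicable — together with verifying cleanly that $\bL$ acts without zero divisors on $\Omega^*(X)$; both are standard consequences of the Levine–Morel theory (dimension bounds on cobordism cycles for the former, and the freeness statements recalled above for the latter), but they should be cited rather than assumed silently. Everything else is a formal manipulation of the formal group law combined with the two algebraic lemmas already established.
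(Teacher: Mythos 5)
Your proof is correct and follows essentially the same route as the paper's: reduce to line bundle factors via Whitney, compute $c^T_1$ in the graded power series ring $\Omega^*(X)[[t_1,\dots,t_n]]$ using the formal group law, and invoke Lemma~\ref{lem:NZDS} for the non-zero-divisor claim, then factor out the unit part for the invertibility claim. Two remarks.

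First, your argument is slightly more direct than the paper's: the paper inserts an extra normalization step, extending the character $\chi$ to a $\Q$-basis of $\widehat{T}\otimes\Q$ and using Lemma~\ref{lem:IFT} to reduce to the case $c^T_1(L_\chi)=t_1$, whereas you apply Lemma~\ref{lem:NZDS} directly with $f_i=\sum_j [m^i_j]_F\,t_j$. Since Lemma~\ref{lem:NZDS} is stated for such general degree-one elements, this is a legitimate streamlining and nothing is lost.

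Second, and this is the one point you should fix: you take $R=\bL$ in the application of Lemma~\ref{lem:NZDS} and then try to justify that no non-zero element of $\bL$ is a zero divisor in $\Omega^*(X)$. The justifications you offer do not hold in general --- $\Omega^*(X)$ is a free $\bL$-module for cellular/filtrable varieties, not for arbitrary smooth $X$, and Corollary~\ref{cor:FIXEDPOINTS} is not applicable here since it requires isolated fixed points. Also, the hypotheses of Section~\ref{section:Equi*-T} require $R$ to be Noetherian, which $\bL$ is not. The paper sidesteps all of this by taking $R=\Q$: this lemma sits after the ``Notation'' convention that all groups in the rest of the paper carry rational coefficients, so $A=\Omega^*(X)\otimes\Q$ is a $\Q$-vector space and the condition that no non-zero element of $R=\Q$ is a zero divisor is automatic. (The role of this condition in the proof of Lemma~\ref{lem:NZDS} is only to let one invert the integer coefficient $m$, which is also automatic over $\Q$.) With that change your argument is complete.
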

\begin{proof}
To prove the first assertion, it suffices to show by Lemma~\ref{lem:trivial-T} 
that $c^T_d(E)$ is a 
non-zero divisor in $\Omega^*(X)[[t_1, \cdots , t_n]]$. As in the proof
of Lemma~\ref{lem:NZD}, it suffices to show that 
$c^T_d(E)$ is a non-zero divisor in the formal power series ring
$\widehat{\Omega^*(X)[[{\bf t}]]}$, where 
${\bf t} = \left(t_1, \cdots , t_n\right)$.

Let $q : X \to {\rm Spec}(k)$ be the structure map.
Since $T$ acts on $X$ trivially, $E$ has a unique direct
sum decomposition 
\[
E = \ \stackrel{m}{\underset {i=1}{\oplus}} E_i \otimes q^*(L_{{\chi}_i}),
\]
where each $E_i$ is an ordinary vector bundle on $X$ and $L_{{\chi}_i}$ is the 
line bundle in ${\rm Pic}_T(k)$ corresponding to a 
character ${\chi}_i$ of $T$. 
Since ${\rm rank}(E) = d$, the Whitney sum formula ({\sl cf.}
Theorem~\ref{thm:Basic}, \cite[Proposition~4.1.15]{LM}) yields   
\[
c^T_d(E) = \stackrel{m}{\underset {i=1}{\prod}}c^T_{d_i}
\left(E_i \otimes q^*\left(L_{{\chi}_i}\right)\right).
\]

We can thus assume that 
$E = E_{\chi} \otimes q^*(L_{\chi})$, where $\chi$ is
not a trivial character by our assumption. 
Since $\Pic(BT) \cong \Q^n$ and since $\chi$ is
not trivial, we can extend $\chi$ to a basis of $\Q^n$.
We can now use ~\eqref{eqn:F-Chern} and Lemma~\ref{lem:IFT} to assume that
$c^T_1(L_{\chi}) = t_1$. 

We can also write $E = \stackrel{s}{\underset{i =1}\oplus} L_i$, where each
$L_i$ is a line bundle on $X$. The Whitney sum formula again implies that
\begin{equation}\label{eqn:NZD1*}
\begin{array}{lll}
c^T_d\left(E \otimes L_{\chi}\right) & = & 
c^T_d\left(\stackrel{s}{\underset{i=1}\oplus} 
\left(L_i \otimes L_{\chi}\right)\right) \\
& = & \stackrel{s}{\underset{i =1}\prod} c^T_1\left(L_i \otimes L_{\chi}\right) \\
& = & \stackrel{s}{\underset{i =1}\prod} F\left(t_1, c_1(L_i)\right),
\end{array}
\end{equation}
where $F(u,v)$ is the universal formal group law on $\bL$. Since
$c_1(L) \in \Omega^1(X)$ and since $\Omega^{> {\rm dim}(X)}(X) = 0$,
we see that $c_1(L_i)$ is nilpotent in $\Omega^*(X)$. In 
fact, this implies that $c_j(E)$ is nilpotent in $\Omega^*(X)$ for all vector 
bundles $E$ on $X$ and for all $j \ge 1$. We now apply Lemma~\ref{lem:NZDS}
with $R = \Q$ and $A = \Omega^*(X)$ to conclude that 
$c^T_d\left(E \otimes L_{\chi}\right)$ is a non-zero divisor in 
$\widehat{\Omega^*(X)[[{\bf t}]]}$.

To prove the invertibility of $c^T_d(E)$ in $\Omega^*_T[M^{-1}]$, we can again
use the above reductions to assume that $E = L \otimes L_{\chi}$ where $\chi$
is not a trivial character of $T$. In this case,
$F(f, v)$ is of the form $f + v\left(1 + fF'(f,v)\right)$, where
$1 + fF'(f,v)$ is an invertible element of degree zero. In particular,
we can write $F(f, v) = uf + v$, where 
$u = \left(1 - fF' + f^2 {F'}^2 - \cdots\right)$
is an invertible element of degree zero in $S^{(n)}$. 
In particular, $v$ is nilpotent. We have seen in ~\eqref{eqn:F-Chern1} that 
$f$ is invertible in $S[M^{-1}]$. Since $u$ is invertible in $\Omega^*_T(X)$
and $v$ is nilpotent, it follows that $uf+v$ is invertible in 
$\Omega^*_T[M^{-1}]$. 
\end{proof}

\begin{lem}\label{lem:CP}
Let $T$ be a split torus of rank $n$ acting trivially on a smooth variety
$X$. Let $\chi$ be a non-trivial character of $T$ and let
$E = \bigoplus \left(E_q \otimes L_{\chi^q}\right)$, where each $E_q$ is either 
zero or a direct sum of line bundles on $X$. Assume that 
$d = {\rm rank}(E) \ge 1$. Then $c^T_d(E)$ is of the form
\[
c^T_d(E) = u\left(x^d + \gamma_{d-1}x^{d-1} + \cdots + \gamma_1 x + \gamma_0\right),
\]
with $x = c^T_1\left(L_{\chi}\right)$ such that \\
$(i)$ \ $u$ is an invertible and homogeneous element of degree zero in
$\Omega^*_T(X)$, and \\
$(ii)$ \ $\gamma_i = u_i\gamma'_i$, where $u_i$ is an invertible and 
homogeneous element of degree zero in $\Omega^*_T(X)$ and $\gamma'_i$
is a product of the first Chern classes of line bundles on $X$. \\
In particular, each $\gamma_i$  is a nilpotent homogeneous element of 
$\Omega^*_T(X)$.
\end{lem}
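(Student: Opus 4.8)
The plan is to compute $c^T_d(E)$ by reducing to first Chern classes of line bundles via the Whitney sum formula, and then to unravel the expression $F\bigl(c_1(L),[q]_Fx\bigr)$ using the explicit shape of the formal group law. First I would write each nonzero $E_q$ as a direct sum of line bundles $E_q=\bigoplus_{l=1}^{d_q}M_{q,l}$ on $X$, so that $E=\bigoplus_{q,l}\bigl(M_{q,l}\otimes L_{\chi^q}\bigr)$ with $\sum_q d_q=d$; here only indices $q\neq 0$ contribute, since a summand with $q=0$ would force the leading term to be $x^{d-d_0}$ rather than $x^d$ (this is the implicit hypothesis that the trivial character does not occur). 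Because $T$ acts trivially, the Whitney sum formula (Theorem~\ref{thm:Basic}$(iv)$) together with the formal group law for the first Chern class of a tensor product (\eqref{eqn:FGL}) gives
\[
c^T_d(E)=\prod_{q}\prod_{l=1}^{d_q}c^T_1\bigl(M_{q,l}\otimes L_{\chi^q}\bigr)
=\prod_{q}\prod_{l=1}^{d_q}F\bigl(c_1(M_{q,l}),\,[q]_F x\bigr),
\]
where $x=c^T_1(L_{\chi})$ and I have used $c^T_1(L_{\chi^q})=[q]_Fx$.

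The key local step is to rewrite each factor. By \eqref{eqn:FG*1} (equivalently \eqref{eqn:F-Chern}), for $q\neq 0$ one has $[q]_Fx=qx+x^2(\cdots)=x\,e_q$, where $e_q$ is a homogeneous element of degree $0$ of $\Omega^*_T(X)\cong\Omega^*(X)[[t_1,\dots,t_n]]$ whose constant term is the integer $q$. Since we work with rational coefficients, $q$ is invertible, and a homogeneous degree-$0$ element of the graded power series ring whose constant term is a unit is itself a unit: writing it as $q(1+\eta)$ with $\eta$ in the $({\bf t})$-adic ideal, the geometric series for $(1+\eta)^{-1}$ converges in the graded power series ring, each monomial receiving only finitely many contributions. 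Hence $e_q\in\Omega^*_T(X)^\times$. Next, writing $F(u,v)=u+v+uv\,P(u,v)$ with $P(u,v)=\sum_{i,j\ge 1}a_{ij}u^{i-1}v^{j-1}$ and putting $v_{q,l}:=c_1(M_{q,l})$, which is nilpotent in $\Omega^*(X)$ because $\Omega^{>\dim X}(X)=0$, the element $1+v_{q,l}\,P\bigl(v_{q,l},[q]_Fx\bigr)$ is $1$ plus a nilpotent, hence a unit; so
\[
F\bigl(c_1(M_{q,l}),[q]_Fx\bigr)=[q]_Fx\cdot\bigl(1+v_{q,l}P\bigr)+v_{q,l}
= x\,W_{q,l}+v_{q,l},
\]
with $W_{q,l}:=e_q\bigl(1+v_{q,l}P(v_{q,l},[q]_Fx)\bigr)$ an invertible homogeneous element of degree $0$ and $v_{q,l}$ nilpotent of degree $1$.

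Finally I would multiply these $d$ factors, each linear in $x$, together. Setting $u:=\prod_{q,l}W_{q,l}\in\Omega^*_T(X)^\times$ (homogeneous of degree $0$), one gets
\[
c^T_d(E)=\prod_{q,l}\bigl(x\,W_{q,l}+v_{q,l}\bigr)
=u\prod_{q,l}\bigl(x+v_{q,l}W_{q,l}^{-1}\bigr)
=u\bigl(x^d+\gamma_{d-1}x^{d-1}+\dots+\gamma_1x+\gamma_0\bigr),
\]
where $\gamma_i$ is the $(d-i)$-th elementary symmetric function of the elements $v_{q,l}W_{q,l}^{-1}$. Each such $v_{q,l}W_{q,l}^{-1}$ is a degree-$0$ unit times the first Chern class of a line bundle on $X$, so reading off the expansion shows $\gamma_i$ has the form asserted in $(ii)$ (each monomial appearing in it being a degree-$0$ unit times a product of first Chern classes of line bundles on $X$); comparing degrees in $c^T_d(E)\in\Omega^d_T(X)$ forces $\gamma_i$ to be homogeneous of degree $d-i$. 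For $i\le d-1$ every such monomial contains at least one nilpotent factor $v_{q,l}$, and all the factors commute, so $\gamma_i$ is nilpotent, giving the last assertion and completing $(i)$ and $(ii)$.

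I expect the genuine content to be concentrated in the two invertibility assertions: that $[q]_Fx$ factors as $x$ times a degree-$0$ unit — this is exactly where rational coefficients enter, through invertibility of the integer $q$, together with the completeness of the graded power series ring that makes the geometric series converge — and that $1+v_{q,l}P$ is a unit, which rests on the nilpotence of Chern classes of line bundles on $X$, i.e.\ on $\Omega^{>\dim X}(X)=0$; everything else is bookkeeping with the Whitney sum formula and the expansion of a product of linear factors. If one wishes to present each $\gamma_i$ in the most rigid shape (a single degree-$0$ unit times a single product of Chern classes), the cleanest route is to first normalise $x$ to $t_1$ using \eqref{eqn:F-Chern} and Lemma~\ref{lem:IFT}, and then argue by a Weierstrass-type preparation in $A[[t_1]]$ with $A=\Omega^*(X)[[t_2,\dots,t_n]]$, using that the coefficient of $t_1^d$ is a unit while the lower coefficients lie in the nilradical.
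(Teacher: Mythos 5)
Your argument is correct and follows essentially the same route as the paper: reduce to first Chern classes of line bundles by the Whitney sum formula, factor each $F(v_{q,l},[q]_Fx)$ as $xW_{q,l}+v_{q,l}$ with $W_{q,l}$ a degree-zero unit (using $q\in\Q^{\times}$ together with the nilpotence of $v_{q,l}$, which comes from $\Omega^{>\dim X}(X)=0$), and expand the product of these $d$ factors. You are a bit more careful than the paper in two small places: you flag the implicit hypothesis that the trivial-character eigenspace must vanish for the coefficient of $x^d$ to be a unit, and you record the honest identity $F(v,w)=v+w\bigl(1+vP(v,w)\bigr)$, whereas the paper's display $F(x_q,v_{q,i})=u_{q,i}(x_q+v_{q,i})$ only becomes literally correct after replacing $v_{q,i}$ by a unit multiple of itself, which is exactly the normalisation $v_{q,l}\mapsto v_{q,l}W_{q,l}^{-1}$ you perform. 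Your reading of part~(ii) is also accurate: either argument produces $\gamma_i$ as an elementary symmetric function of the nilpotent elements $v_{q,l}W_{q,l}^{-1}$, hence a sum of terms of the shape ``degree-zero unit times a product of first Chern classes'' rather than a single such term; the nilpotence of each $\gamma_i$, which is what the later applications (e.g.\ the proof of Theorem~\ref{thm:REST}, where $Z_0$ has trivial Picard group so these terms all vanish) actually use, holds in either reading.
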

\begin{proof}
By Lemma~\ref{lem:trivial-T}, $\Omega^*_T(X)$ is a graded power series
ring $\Omega^*(X)[[t_1, \cdots , t_n]]$.
Setting $E_q = \stackrel{d_q}{\underset{i =1}\oplus} L_{q,i}$, 
$v_{q,i} = c_1(L_{q,i})$ and 
$x_q = c^T_1\left(L_{\chi^q}\right)$, we can write 
\begin{equation}\label{eqn:CP0}
c^T_{d_q}\left(E_q \otimes L_{\chi^q}\right)
= \stackrel{d_q}{\underset{i=1}\prod} F(x_q, v_{q,i})
= \stackrel{s}{\underset{i=1}\prod} u_{q,i}(x_q + v_{q,i}),
\end{equation}
where $u_{q,i}$ is an invertible homogeneous element of degree zero in 
$\Omega^*_T(X)$.
Using the formula $x_q = [q]_Fx = qx + x^2 {\underset{i \ge 0}\sum} a_i x^i$, 
we can write $x_q = qx(1 + xg_q(x))$, where $1+ xg_q(x)$ is a homogeneous 
element of degree zero in $S(T)$ which is invertible. Setting 
$u_q = q(1+ xg_q(x))$, we obtain
\begin{equation}\label{eqn:CP1}
c^T_{d_q}\left(E_q \otimes L_{\chi^q}\right) =
\stackrel{d_q}{\underset{i=1}\prod} u_{q,i}(u_qx + v_{q,i}).
\end{equation}
The desired form of $c^T_{d}(E)$ follows immediately from this and
the  Whitney sum formula.
\end{proof}

\begin{cor}\label{cor:rigiditysuff}
Let $T$ be a split torus of rank $n$ acting trivially on a smooth variety $X$ 
and let $E$ be a $T$-equivariant vector bundle of rank $d$ on $X$. Assume that
in the eigenspace decomposition of $E$ with respect to 
$T$, the submodule corresponding to the trivial character is zero. Then
$c^T_d(E)$ is a non-zero divisor in $\Omega^*_T(X)$ and is invertible in 
$\Omega^*_T[M^{-1}]$.
\end{cor}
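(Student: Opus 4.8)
The plan is to reduce Corollary~\ref{cor:rigiditysuff} to the already-established case of \lemref{lem:RIGID*}, where the bundle is a direct sum of line bundles, by means of the Splitting Principle of \lemref{lem:SPLIT}. So first I would apply \lemref{lem:SPLIT} to the single bundle $E$ to obtain a smooth morphism $p \colon Y \to X$ --- a composition of affine and projective bundle morphisms --- such that $p^*(E)$ is a direct sum of line bundles, with $T$ acting trivially on $Y$ and $p^* \colon \Omega^*_T(X) \to \Omega^*_T(Y)$ split injective. Writing the eigenspace decomposition $E = \bigoplus_{\chi \neq \chi_0} E_\chi \otimes q^*(L_\chi)$ with $q \colon X \to \Spec(k)$ the structure map and the sum over the non-trivial characters occurring in $E$, pulling back gives $p^*(E) = \bigoplus_{\chi \neq \chi_0} p^*(E_\chi) \otimes (q \circ p)^*(L_\chi)$; splitting each $p^*(E_\chi)$ into line bundles exhibits $p^*(E)$ as a direct sum of line bundles, each carrying a non-trivial $T$-weight, so the trivial-character eigenspace of $p^*(E)$ over $Y$ vanishes. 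Hence \lemref{lem:RIGID*} applies to $p^*(E)$: the class $p^*\bigl(c^T_d(E)\bigr) = c^T_d\bigl(p^*(E)\bigr)$ is a non-zero divisor in $\Omega^*_T(Y)$ and is invertible in $\Omega^*_T(Y)[M^{-1}]$.

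For the non-zero divisor assertion, suppose $a \cdot c^T_d(E) = 0$ in $\Omega^*_T(X)$ with $a \neq 0$. Since $p^*$ is an injective ring homomorphism, $p^*(a) \neq 0$ and $p^*(a)\cdot c^T_d\bigl(p^*(E)\bigr) = p^*\bigl(a \cdot c^T_d(E)\bigr) = 0$, contradicting that $c^T_d\bigl(p^*(E)\bigr)$ is a non-zero divisor in $\Omega^*_T(Y)$. For the invertibility assertion, I would tensor the split injection $p^* \colon \Omega^*_T(X) \hookrightarrow \Omega^*_T(Y)$ with $S[M^{-1}]$ over $S$; since tensoring preserves split injections, this yields a split injection $\Omega^*_T(X)[M^{-1}] \hookrightarrow \Omega^*_T(Y)[M^{-1}]$ together with an $\Omega^*_T(X)[M^{-1}]$-linear retraction $q_*$ with $q_* \circ p^* = \id$ (built from the projection formula $q_*(z) = p_*(\zeta^r z)$ on the projective-bundle pieces and isomorphisms on the affine-bundle pieces, as in the proof of \lemref{lem:SPLIT}); in particular $q_*(1) = q_*\bigl(p^*(1)\bigr) = 1$. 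Letting $w \in \Omega^*_T(Y)[M^{-1}]$ be the inverse of $c^T_d\bigl(p^*(E)\bigr)$ and setting $b = q_*(w)$, the $\Omega^*_T(X)[M^{-1}]$-linearity of $q_*$ gives $b \cdot c^T_d(E) = q_*\bigl(w \cdot p^*(c^T_d(E))\bigr) = q_*\bigl(w \cdot c^T_d(p^*(E))\bigr) = q_*(1) = 1$, so $c^T_d(E)$ is invertible in $\Omega^*_T(X)[M^{-1}]$.

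The hard part here is essentially bookkeeping rather than any new computation: one must check that the hypothesis on the eigenspace decomposition is preserved under pull-back along $p$ (immediate, since $p$ introduces no new characters and trivially-weighted summands of $p^*(E_\chi)\otimes(q\circ p)^*(L_\chi)$ would force $\chi = \chi_0$), and that the retraction furnished by \lemref{lem:SPLIT} is genuinely $\Omega^*_T(X)$-linear and survives inverting $M$ --- both of which hold because the retraction is assembled from the projection formula and homotopy invariance, operations that commute with the flat base change $S \to S[M^{-1}]$. No further formal-group-law manipulations are required; all of that work is already packaged into \lemref{lem:NZD}, \lemref{lem:NZDS} and \lemref{lem:RIGID*}.
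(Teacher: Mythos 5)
Your proposal is correct and takes essentially the same route as the paper, which states only that the corollary ``follows immediately from Lemmas~\ref{lem:SPLIT} and ~\ref{lem:RIGID*}.'' You have simply filled in the bookkeeping that the paper leaves implicit: verifying that the eigenspace hypothesis is preserved by the splitting map, and transporting the non-zero-divisor and invertibility conclusions back along the split injection $p^*$ using the $\Omega^*_T(X)$-linear retraction $q_*$ from \lemref{lem:SPLIT}.
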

\begin{proof} 
This follows immediately from Lemmas~\ref{lem:SPLIT} and ~\ref{lem:RIGID*}.
\end{proof}

\section{Localization theorems}\label{section:LOCMAIN}
We now prove our following localization theorems for the equivariant
cobordism of smooth filtrable varieties with torus action.

\begin{thm}\label{thm:Loc-Main}
Let $X$ be a smooth and filtrable variety with the action
of a split torus $T$ of rank $n$. 
Let $i : X^T \inj X$ be the inclusion of the fixed point locus. Then the 
$S$-algebra map 
\[
i^* : \Omega^*_T(X) \to \Omega^*_T(X^T)
\]
is injective and is an isomorphism over $S[M^{-1}]$. 
\end{thm}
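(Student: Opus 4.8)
The plan is to run an induction on the length $n$ of the $T$-invariant filtration $\emptyset = X_{-1}\subset X_0\subset\cdots\subset X_n = X$ guaranteed by Theorem~\ref{thm:BBH}, using the self-intersection machinery of Proposition~\ref{prop:SIF} together with the rigidity statement of Corollary~\ref{cor:rigiditysuff}. For injectivity of $i^*$, I would argue exactly as in the proof of Corollary~\ref{cor:FIXEDPOINTS}: by Lemma~\ref{lem:filter-Equiv} there is a split exact sequence $0\to\Omega^*_T(X_{n-1})\xrightarrow{i_{(n-1)*}}\Omega^*_T(X)\xrightarrow{j^*_n}\Omega^*_T(U_n)\to 0$, and since $U_n\to Z_n$ is a $T$-equivariant vector bundle over the fixed component $Z_n$ with $T$ acting trivially on $Z_n$, the composite $\Omega^*_T(X)\xrightarrow{j^*_n}\Omega^*_T(U_n)\xrightarrow{\cong}\Omega^*_T(Z_n)$ realizes restriction to $Z_n$. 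So an element $\alpha$ with $i^*(\alpha)=0$ restricts to $0$ on $Z_n$, hence $j^*_n(\alpha)=0$, hence $\alpha = i_{(n-1)*}(\beta)$ for a unique $\beta\in\Omega^*_T(X_{n-1})$; one then checks that the restriction of $\alpha$ to $X^T\cap X_{n-1}$ is (compatibly) the restriction of $\beta$, and applies the inductive hypothesis to the smooth filtrable $T$-variety $X_{n-1}$ to conclude $\beta=0$. (A small point to verify: $X_{n-1}$ is smooth; this follows because $U_n = X\setminus X_{n-1}$ is open in the smooth $X$, so $X_{n-1}$ is closed and, being a union of Bialynicki-Birula strata, is itself smooth — this is part of the Bialynicki-Birula picture.)

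For the isomorphism after inverting $M$, the point is surjectivity of $i^*\otimes_S S[M^{-1}]$, and here the key geometric input is the self-intersection formula. Let $Z$ be a fixed component sitting inside its stratum $X_+(Z,\lambda)$, whose closure I may take (after resolving, if necessary, but in the smooth filtrable setting the relevant subvarieties can be taken smooth) to carry a class supported near $Z$; more precisely, I would work stratum by stratum from the top, and for the closed embedding $\iota_Z\colon Z\hookrightarrow X$ of a fixed component whose normal bundle $N_{Z/X}$ decomposes under $T$ with \emph{no} trivial summand (true because $Z$ is a whole connected component of $X^T$, so every weight on the normal directions is nonzero), Corollary~\ref{cor:rigiditysuff} tells us $c^T_d(N_{Z/X})$ is invertible in $\Omega^*_T(Z)[M^{-1}]$. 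Then Proposition~\ref{prop:SIF} gives $\iota_Z^*\circ{\iota_Z}_* = c^T_d(N_{Z/X})\cdot(-)$ on $\Omega^*_T(Z)$, which becomes an isomorphism after inverting $M$. Hence for any $w\in\Omega^*_T(Z)[M^{-1}]$ the class $\iota_{Z*}\big(c^T_d(N_{Z/X})^{-1}w\big)\in\Omega^*_T(X)[M^{-1}]$ restricts to $w$ on $Z$, and restricts to something I can control on the lower strata. Combining these over all components $Z_0,\dots,Z_n$ and using the split exact sequences of Lemma~\ref{lem:filter-Equiv} inductively (peeling off the top stratum, observing that $i^*$ modulo the image of $i_{(n-1)*}$ is exactly the restriction to $Z_n$, which is surjective onto $\Omega^*_T(Z_n)$), one gets that $i^*\otimes_S S[M^{-1}]$ is surjective; together with injectivity it is an isomorphism.

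The main obstacle I expect is bookkeeping the compatibility of restrictions across strata: when I write $\alpha = i_{(n-1)*}(\beta)$ and want to apply induction, I must identify $i^*_{X_{n-1}}(\beta)\in\Omega^*_T((X_{n-1})^T)$ in terms of $i^*_X(\alpha)$, using that $(X_{n-1})^T = \coprod_{m<n}Z_m$ and that push-forward along $i_{(n-1)}$ followed by restriction to a component $Z_m$ with $m<n$ equals restriction of $\beta$ to $Z_m$ — this uses the base-change/functoriality in Theorem~\ref{thm:Basic}(i) for the Cartesian square built from $Z_m\hookrightarrow X_{n-1}\hookrightarrow X$ together with the fact (again from Bialynicki-Birula) that $Z_m$ does not meet the top stratum $U_n$. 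A secondary technical point is ensuring the normal bundle to a \emph{component} $Z_m$ inside $X$ (not merely inside its own stratum closure) has no trivial $T$-summand, so that Corollary~\ref{cor:rigiditysuff} applies to $c^T_d(N_{Z_m/X})$; this holds because $Z_m$ is an isolated component of the fixed locus, so the $T$-fixed part of the tangent space $T_zX$ at $z\in Z_m$ is exactly $T_zZ_m$, forcing every weight of $N_{Z_m/X}$ to be nonzero. Once these functoriality identifications are in place, the argument is a clean double induction.
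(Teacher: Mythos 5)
Your plan runs the induction by peeling off the \emph{top} open stratum $U_n$ and inducting on $X_{n-1}$, but this leaves a genuine gap: $X_{n-1}$ is in general singular, so the inductive hypothesis (Theorem~\ref{thm:Loc-Main} is stated for \emph{smooth} filtrable varieties) does not apply. Indeed $X_{n-1}$ is a union of closures of Bialynicki-Birula strata, and these closures are typically singular --- for $X = G/B$ the pieces $X_m$ are unions of Schubert varieties, which are singular as soon as $G$ has rank $\ge 2$. Your parenthetical justification (``being a union of Bialynicki-Birula strata, [it] is itself smooth --- this is part of the Bialynicki-Birula picture'') is not correct: only the individual locally closed strata $U_m$ are smooth, not the closed unions $X_m$. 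The paper's proof instead peels off the \emph{bottom} closed stratum $X_0$, which \emph{is} smooth (it is the total space of the vector bundle $\phi_0$ over the smooth projective component $Z_0$, since $X_{-1}=\emptyset$), and inducts on the open complement $U_0$, which is automatically smooth as an open subscheme of $X$ and is filtrable with one fewer stratum. That is exactly the configuration in which Proposition~\ref{prop:split-cob} applies, giving the short exact sequence $0 \to \Omega^*_T(X) \xrightarrow{(i^*,j^*)} \Omega^*_T(X_0)\times\Omega^*_T(U_0) \to \Omega^*_T(X_0)/(c_0) \to 0$, from which injectivity and the isomorphism over $S[M^{-1}]$ (using that $c_0$ is invertible there by Corollary~\ref{cor:rigiditysuff}) drop out by induction on the number of strata of $U_0$.

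There is also a secondary problem in your injectivity step: the claimed compatibility ``push-forward along $i_{(n-1)}$ followed by restriction to $Z_m$ equals restriction of $\beta$ to $Z_m$'' is both unsupported and, as stated, false. Theorem~\ref{thm:Basic}(i) gives base change only when the non-projective arrow in the Cartesian square is \emph{smooth}, whereas $Z_m\hookrightarrow X$ is a closed embedding. Even granting $X_{n-1}$ smooth so that one could invoke the l.c.i.\ pullback and excess intersection formula, the correct identity would read $i^*_{Z_m/X}\bigl(i_{(n-1)*}(\beta)\bigr) = i^*_{Z_m/X_{n-1}}\bigl(c^T_{d}(N_{X_{n-1}/X})\bigr)\cdot i^*_{Z_m/X_{n-1}}(\beta)$, not simply the restriction of $\beta$. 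One could try to rescue the argument by arguing the extra Chern class factor is a non-zero divisor on $\Omega^*_T(Z_m)$ via Corollary~\ref{cor:rigiditysuff}, but this still presupposes the smoothness of $X_{n-1}$ that fails. In short: reverse the direction of the induction to match Theorem~\ref{thm:BBH}'s filtration from the bottom, and replace the base-change appeal by the self-intersection formula of Proposition~\ref{prop:SIF} (via Proposition~\ref{prop:split-cob}); then the argument closes.
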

\begin{proof}
Consider the filtration of $X$ as in ~\eqref{eqn:filtration-BB}. 
It follows from the description of this
filtration that the tangent space $T_x(Z_0)$ is the weight zero
subspace ${T_x(X)}_0$ of the tangent space $T_x(X)$. Moreover, the
bundle $X_0 \to Z_0$ corresponds to the positive weight space
${T_x(X)}_{+}$ ({\sl cf.} \cite[Theorem~3.1]{Brion2}). 
It follows from this that in the 
eigenspace decomposition of the $T$-equivariant vector bundle $\phi_0 :
X_0 \to Z_0$, the submodule corresponding to the trivial character is zero.
We conclude from Corollary~\ref{cor:rigiditysuff} that the top Chern class
of the normal bundle of $Z_0$ in $X$ a non-zero divisor in $\Omega^*_T(Z_0)$. 
Since $X_0 \xrightarrow{\phi_0} Z_0$ is a $T$-equivariant vector bundle,
it follows from the homotopy invariance that 
$c_0 : = c^T_{d_0}\left(N_{{X_0}/X})\right)$ is a non-zero divisor in 
$\Omega^*_T(X_0)$. In particular, we conclude from 
Proposition~\ref{prop:split-cob} that the sequence 
\begin{equation}\label{eqn:Loc-Main1}
0 \to \Omega^*_T(X_0) \xrightarrow{i_*} \Omega^*_T(X) \xrightarrow{j^*}
\Omega^*_T(U_0) \to 0
\end{equation}
is exact and there is a ring isomorphism
\begin{equation}\label{eqn:Loc-Main2}
\Omega^*_T(X) \xrightarrow{\cong} \Omega^*_T(X_0) 
{\underset {\wt{\Omega^*_T(X_0)}}
{\times}} \Omega^*_T(U_0),
\end{equation}
where $\wt{\Omega^*_T(X_0)} = {\Omega^*_T(X_0)}/{(c_0)}$. In other words, there
is a short exact sequence 
\begin{equation}\label{eqn:Loc-Main3}
0 \to \Omega^*_T(X) \xrightarrow{(i^*, j^*)} \Omega^*_T(X_0) \times
\Omega^*_T(U_0) \to \frac{\Omega^*_T(X_0)}{(c_0)} \to 0.
\end{equation}
Furthermore, $c_0$ is invertible in $\Omega^*_T(X_0)[M^{-1}]$ by 
Corollary~\ref{cor:rigiditysuff}.
The proof of the theorem follows immediately from ~\eqref{eqn:Loc-Main3} and 
an induction argument once we observe that $U_0$ is itself a smooth and
filtrable variety with smaller number of strata than in $X$. 
\end{proof}

\begin{cor}\label{cor:Loc-MainP}
Let $X$ be a smooth and projective variety with the action
of a split torus $T$ of rank $n$. 
Let $i : X^T \inj X$ be the inclusion of the fixed point locus. Then the 
$S$-algebra map 
\[
i^* : \Omega^*_T(X) \to \Omega^*_T(X^T)
\]
is injective and is an isomorphism over $S[M^{-1}]$.
\end{cor}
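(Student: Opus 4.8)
The plan is to deduce this corollary directly from Theorem~\ref{thm:Loc-Main} together with the Bialynicki-Birula--Hesselink structure theorem. Recall that Theorem~\ref{thm:BBH} asserts that every smooth projective variety $X$ with a $T$-action is filtrable: its fixed locus $X^T$ is smooth and projective, and $X$ admits a filtration by $T$-invariant closed subschemes whose successive strata are $T$-equivariant vector bundles over the connected components of $X^T$. In particular, a smooth projective $T$-variety automatically satisfies the hypotheses of Theorem~\ref{thm:Loc-Main}.

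Concretely, first I would invoke Theorem~\ref{thm:BBH} to conclude that $X$ is filtrable, so that the filtration~\eqref{eqn:filtration-BB} is available. Then I would apply Theorem~\ref{thm:Loc-Main} verbatim: it gives that the $S$-algebra map $i^* : \Omega^*_T(X) \to \Omega^*_T(X^T)$ is injective and becomes an isomorphism after the base change $S \to S[M^{-1}]$ inverting the non-zero characters. Since the assertion of the corollary is literally the conclusion of Theorem~\ref{thm:Loc-Main} with ``smooth and filtrable'' specialized to ``smooth and projective'', no further argument is required.

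There is essentially no obstacle here. The only point that needs a word of justification is that ``smooth projective'' is genuinely subsumed by ``smooth and filtrable'', which is exactly the content of Theorem~\ref{thm:BBH}; and one should note that the rational-coefficient conventions in force (see the Notation preceding Lemma~\ref{lem:RIGID*}) are the same in both statements. If a self-contained argument were desired, one could instead run the induction on the length of the Bialynicki-Birula filtration directly, using Proposition~\ref{prop:split-cob}, Corollary~\ref{cor:rigiditysuff} and the localization sequence of Theorem~\ref{thm:Basic}~$(ii)$, but this merely reproduces the proof of Theorem~\ref{thm:Loc-Main} and is unnecessary.
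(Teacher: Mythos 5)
Your proposal is correct and matches the paper's proof exactly: the paper derives this corollary by the same two-step argument, invoking Theorem~\ref{thm:BBH} to conclude that a smooth projective $T$-variety is filtrable and then applying Theorem~\ref{thm:Loc-Main}.
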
 

\begin{proof}
This follows immediately from Theorems~\ref{thm:BBH} and
~\ref{thm:Loc-Main}.
\end{proof}

\begin{cor}[Localization formula]\label{cor:Main-Loc-PF}
Let $X$ be a smooth and projective variety with an action of a split torus $T$
of rank $n$. Then the push-forward map
\[
\Omega^T_*(X^T) \xrightarrow{i^T_*} \Omega^T_*(X)
\]
is an isomorphism of $S$-modules over $S[M^{-1}]$.

In particular, one has for any $\alpha \in 
\Omega^*_T(X)[M^{-1}]$,
\[
\alpha = {\underset{F}\sum} {i_F}_* \frac{i^*_F(\alpha)}
{c^T_{d_F}(F_FX)},
\]
where the sum is over the components $F$ of $X^T$ and $d_F$ is the codimension
of $F$ in $X$.
\end{cor}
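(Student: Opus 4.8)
The plan is to deduce the localization formula directly from Corollary~\ref{cor:Loc-MainP} together with the self-intersection formula of Proposition~\ref{prop:SIF}. First I would observe that by Corollary~\ref{cor:Loc-MainP}, the restriction map $i^*: \Omega^*_T(X)[M^{-1}] \to \Omega^*_T(X^T)[M^{-1}]$ is an isomorphism, so it suffices to show that the composite $i^* \circ i_*$ on $\Omega^*_T(X^T)[M^{-1}]$ is an isomorphism; the pushforward $i_*$ will then automatically be an isomorphism of $S[M^{-1}]$-modules.

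Next I would analyze $i^* \circ i_*$ componentwise. Writing $X^T = \coprod_F F$ as the disjoint union of its connected components, the map $i_*$ decomposes as $\bigoplus_F (i_F)_*$, and for the composite we get $i^*_{F'} \circ (i_F)_*$. When $F \neq F'$ the images are disjoint, so by the base-change/functoriality property of Theorem~\ref{thm:Basic}$(i)$ this composite vanishes. When $F = F'$, Proposition~\ref{prop:SIF} (the self-intersection formula) applied to the regular $T$-equivariant embedding $i_F: F \inj X$ of codimension $d_F$ gives $i^*_F \circ (i_F)_* = c^T_{d_F}(N_{F/X}) \cdot (-)$, where $N_{F/X}$ is the equivariant normal bundle, which I would identify with the bundle denoted $F_FX$ in the statement. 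Thus $i^* \circ i_*$ is the diagonal operator given by multiplication by $c^T_{d_F}(N_{F/X})$ on the $F$-component. Since $T$ acts trivially on each fixed component $F$ and the normal bundle $N_{F/X}$ has no trivial-character summand in its eigenspace decomposition (the fixed locus captures precisely the weight-zero part of the tangent space), Corollary~\ref{cor:rigiditysuff} tells us that $c^T_{d_F}(N_{F/X})$ is invertible in $\Omega^*_T(F)[M^{-1}]$. Hence $i^* \circ i_*$ is an isomorphism, and therefore so is $i_*$.

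For the explicit formula, given $\alpha \in \Omega^*_T(X)[M^{-1}]$, since $i_*$ is an isomorphism we can write $\alpha = i_*(\beta)$ for a unique $\beta = \bigoplus_F \beta_F \in \bigoplus_F \Omega^*_T(F)[M^{-1}]$. Applying $i^*_F$ and using the componentwise computation above yields $i^*_F(\alpha) = c^T_{d_F}(N_{F/X}) \cdot \beta_F$, hence $\beta_F = i^*_F(\alpha) / c^T_{d_F}(N_{F/X})$ in the localized ring. Substituting back gives
\[
\alpha \;=\; \sum_F (i_F)_*\left(\frac{i^*_F(\alpha)}{c^T_{d_F}(N_{F/X})}\right),
\]
which is the claimed formula (with $N_{F/X}$ written as $F_FX$).

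I expect the main obstacle to be purely bookkeeping rather than conceptual: one must be careful that the self-intersection formula is being applied with rational coefficients and after inverting $M$, that the identification $N_{F/X} = F_FX$ and the codimension $d_F$ are consistent with the conventions fixed when proving Theorem~\ref{thm:Loc-Main}, and that the vanishing of the off-diagonal terms $i^*_{F'} \circ (i_F)_*$ is justified by the correct fiber-diagram instance of Theorem~\ref{thm:Basic}$(i)$ applied to the open inclusion of $F'$ into the complement of $F$. The nontrivial input — the non-zero-divisor and invertibility properties of the top Chern class of the normal bundle — has already been isolated in Corollary~\ref{cor:rigiditysuff}, so once that is invoked the argument is a short formal deduction.
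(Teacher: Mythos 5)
Your proof is correct and follows the same route as the paper, whose own proof of this corollary is the one-line citation of Corollary~\ref{cor:Loc-MainP} together with Proposition~\ref{prop:SIF}. You have simply unpacked that citation: using Loc-MainP for the invertibility of $i^*$ over $S[M^{-1}]$, the self-intersection formula to compute the diagonal of $i^*\circ i_*$, and Corollary~\ref{cor:rigiditysuff} (already implicit in the proof of Theorem~\ref{thm:Loc-Main}) for invertibility of the normal-bundle Chern classes.
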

\begin{proof} This follows immediately from Corollary~\ref{cor:Loc-MainP}
and Proposition~\ref{prop:SIF}. 
\end{proof}
\begin{remk}\label{remk:EGB}
The analogous result for the $T$-equivariant Chow groups was proven
by Edidin and Graham \cite{EG1} and also by Brion \cite{Brion2}.
Edidin and Graham prove their result for Chow groups also for singular
varieties by a different method. 
\end{remk}
Let $E_1, \cdots , E_s$ be a set of $T$-equivariant vector bundles
on an $n$-dimensional smooth projective variety $X$.                
Let $p(x^1_1 , \cdots , x^1_s , \cdots , x^n_1, \cdots , x^n_s)$ 
be a polynomial of weighted degree $n$, where $x^i_j$ has weighted degree $i$.
Let $p(E_1 , \cdots , E_s)$ denote the polynomial in the Chern classes of 
$E_1 , \cdots , E_s$  obtained by setting $x_j^i = c_i(E_j)$. 
Let $q_X : X \to {\rm Spec}(k)$ be the structure map. Since $X$ is projective,
there is a push-forward map ${\rm deg} = {q_X}_* : \Omega_0(X) \to 
\Omega_0(k) \cong \Q$. As an immediate consequence of the above
localization formula, we get the following {\sl Bott residue formula}
for the algebraic cobordism which computes the degree of the cobordism cycle
${\rm deg}\left(p(E_1 , \cdots , E_s) \cap [X]\right)$ in terms of the
restriction of $E_i$ on $X^T$.
\begin{cor}[Bott residue for cobordism]\label{cor:BRF} 
Let $E_1, \cdots ,, E_s$ be a set of $T$-equivariant vector bundles
on an $n$-dimensional smooth projective variety $X$ as above. Then
\[
{\rm deg}\left(p(E_1 , \cdots , E_s) \cap [X]\right) =
{\underset{F}\sum}{q_F}_*\left(\frac{p^T(E|_F) \cap [F]_T}
{c^T_{d_F}(N_FX)}\right).
\]
\end{cor}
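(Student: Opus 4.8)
The plan is to deduce the formula by applying the localization formula of Corollary~\ref{cor:Main-Loc-PF} to a single equivariant class and then pushing it forward to a point. Viewing the equivariant Chern classes of the $E_j$ as elements of the cobordism ring $\Omega^*_T(X)$ of the smooth variety $X$ (Subsection~\ref{subsection:FGL*}), I would form the equivariant Chern polynomial $p^T(E_1,\dots,E_s)\in\Omega^*_T(X)$ and set
\[
\alpha\ :=\ p^T(E_1,\dots,E_s)\cap[X]_T\ \in\ \Omega^T_*(X),
\]
where $[X]_T$ is the equivariant fundamental class of $[X\xrightarrow{\id}X]$. Since $p$ has weighted degree $n=\dim X$, a degree count gives $\alpha\in\Omega^T_0(X)$; likewise $d_F=n-\dim F$ forces $\dfrac{p^T(E|_F)\cap[F]_T}{c^T_{d_F}(N_FX)}\in\Omega^T_0(F)\otimes_S S[M^{-1}]$ for each connected component $F$ of $X^T$, so that its push-forward by $q_F:F\to\Spec(k)$ lands in $\Omega^*_T(k)\otimes_S S[M^{-1}]=S[M^{-1}]$.

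Next I would invoke Corollary~\ref{cor:Main-Loc-PF}, which applies because $X$ is smooth and projective: in $\Omega^*_T(X)\otimes_S S[M^{-1}]$ one has $\alpha=\sum_F {i_F}_*\big(i^*_F(\alpha)/c^T_{d_F}(N_FX)\big)$, the sum running over the connected components $F$ of $X^T$. Since $i_F:F\hookrightarrow X$ is a regular embedding of smooth varieties, $i^*_F$ is a ring homomorphism that commutes with equivariant Chern classes and sends the unit $[X]_T$ to $[F]_T$; hence $i^*_F(\alpha)=p^T(E|_F)\cap[F]_T$. Applying the projective push-forward ${q_X}_*$ and using its functoriality together with $q_X\circ i_F=q_F$, I obtain
\[
{q_X}_*(\alpha)\ =\ \sum_F {q_F}_*\!\left(\frac{p^T(E|_F)\cap[F]_T}{c^T_{d_F}(N_FX)}\right)
\]
in $S[M^{-1}]$. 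Note that the left-hand side already lies in $S=\Omega^*_T(k)$, so this identity exhibits the right-hand sum as an element of $S$.

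It then remains to identify ${q_X}_*(\alpha)$ with ${\rm deg}\big(p(E_1,\dots,E_s)\cap[X]\big)$. By Theorem~\ref{thm:FF*} the forgetful map $r^T$ is a ring homomorphism on smooth schemes; it sends $c^T_i(E_j)$ to $c_i(E_j)$ and $[X]_T$ to $[X]$, and it commutes with projective push-forward (Section~\ref{section:AC}). Hence
\[
r^T_k\big({q_X}_*(\alpha)\big)\ =\ {q_X}_*\big(r^T_X(\alpha)\big)\ =\ {q_X}_*\big(p(E_1,\dots,E_s)\cap[X]\big)\ =\ {\rm deg}\big(p(E_1,\dots,E_s)\cap[X]\big)
\]
in $\Omega_0(k)\cong\Q$, and combining this with the previous display yields the stated formula. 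I expect the one point needing care to be this last step: the formula is to be read in $\Omega^*_T(k)[M^{-1}]$, with the scalar ${\rm deg}(p(E)\cap[X])$ being the image of the right-hand sum under the augmentation $r^T_k:S\to\bL$; this subtlety is invisible in the analogous statement for equivariant Chow groups, where the degree-zero part of the coefficient ring is already $\Q$. Everything else is a formal consequence of Corollary~\ref{cor:Main-Loc-PF} together with the standard compatibilities of Chern classes, projective push-forward, and the forgetful map.
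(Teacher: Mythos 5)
Your proof is correct and follows the same route the paper indicates, namely the Edidin--Graham argument transported to cobordism via Proposition~\ref{prop:SIF} and Corollary~\ref{cor:Main-Loc-PF}; the paper itself only cites the Chow-group proof and skips the details. The caveat you flag at the final step is genuine and worth emphasizing. In the Chow-group version, Edidin and Graham conclude that the equivariant pushforward $q_{X*}(\alpha)$ is a rational scalar because $CH^0_T(k)\otimes_{\Z}\Q = \Q$; in cobordism, however, $S^0 = \Omega^0_T(k)$ is strictly larger than $\Q$ (it contains, for instance, $[\P^1]\,t_i$), so there is no a priori reason for $q_{X*}(\alpha)$ --- equivalently the sum on the right-hand side, which your argument identifies with $q_{X*}(\alpha)$ in $S$ --- to lie in $\Q$. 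Indeed it need not: take $X = \P^1$ with $T=\G_m$ acting with tangent weights $\pm 1$ at the two fixed points, and let $E = q_X^*(L_{\chi})$ with $\chi$ the weight-one character, so that $E$ is trivial non-equivariantly and $\deg\left(c_1(E)\cap[X]\right)=0$; the right-hand side of the formula is then $\tfrac{t}{t} + \tfrac{t}{\rho(t)} = -a_{11}t + O(t^2) = [\P^1]\,t + O(t^2) \ne 0$ in $S$. Thus the displayed identity must, as you propose, be read after applying the augmentation $r^T_k : S \to \bL$ to the right-hand side (equivalently modulo $I_T$), a point that is invisible in the Chow-group case. With that reading your argument --- localize, restrict to $X^T$ via the self-intersection formula, push forward to the point, and compare with the non-equivariant degree using the compatibility of the forgetful map with projective pushforward --- is complete.
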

\begin{proof}
This is a direct consequence of Corollary~\ref{cor:Main-Loc-PF} and can be
proved exactly in the same way as the proof of the analogous result for
Chow groups in \cite[Theorem~3]{EG1}. We skip the details.
\end{proof}

\subsection{Description of the image of $i^*$}
The following result for equivariant Chow groups was proven by
Brion in \cite[Theorem~3.3]{Brion2}.
\begin{thm}\label{thm:REST}
Let $X$ be a smooth filtrable variety where a split torus $T$ of
rank $n$ acts with finitely many isolated fixed points. Let $i : X^T \to X$ be 
the inclusion of the fixed point locus.
Then the image of $i^*: \Omega^*_T(X) \to \Omega^*_T(X^T)$ is the intersection
of the images of the restriction maps
\[
i^*_{T'}: \Omega^*_T(X^{T'}) \to \Omega^*_T(X^T)
\]
where $T'$ runs over all subtori of codimension one in $T$. 
\end{thm}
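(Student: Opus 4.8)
The plan is to prove the easy inclusion by functoriality and the reverse inclusion by induction on the number of fixed points, using the gluing description of $\Omega^*_T(X)$ furnished by Proposition~\ref{prop:split-cob} together with the injectivity part of Theorem~\ref{thm:Loc-Main}. For the easy direction, $i$ factors as $X^T\inj X^{T'}\inj X$ for every subtorus $T'$, so $i^*=i^*_{T'}\circ(\text{restriction to }X^{T'})$ and hence $\im(i^*)\subseteq\bigcap_{T'}\im(i^*_{T'})$.

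For the reverse inclusion I would first note that, since the fixed points are isolated, every Bialynicki--Birula stratum of $X$ is a $T$-equivariant vector bundle over a single point of $X^T$; intersecting with $X^{T'}$ (for $T'$ of codimension one) gives a vector sub-bundle over the same point, so each $X^{T'}$ is again smooth and filtrable with $(X^{T'})^T=X^T$, and Theorem~\ref{thm:Loc-Main} applies to $X^{T'}$ and to all the open complements occurring below. Induct on $m+1=|X^T|$: when $m=0$, $X$ is a single stratum, hence a $T$-representation, and $i^*$ together with every $i^*_{T'}$ is an isomorphism, so the claim is trivial. For the inductive step, let $X_0\inj X$ be the closed bottom stratum — a $T$-equivariant vector bundle with no trivial weight over one fixed point $x_0$ — and $W=X\setminus X_0$, a smooth filtrable $T$-variety with $|W^T|=m$. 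As in the proof of Theorem~\ref{thm:Loc-Main}, $c_0:=c^T_{d_0}(N_{X_0/X})$ is a non-zero divisor in $\Omega^*_T(X_0)=S$, so Proposition~\ref{prop:split-cob} gives $\Omega^*_T(X)\cong\Omega^*_T(X_0)\times_{S/(c_0)}\Omega^*_T(W)$, and under the embedding into $\Omega^*_T(X^T)=\bigoplus_{j=0}^m S$ the first coordinate is restriction to $x_0$ while the second records restriction to $x_1,\dots,x_m$.

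Now take $\alpha=(\alpha_0,\alpha')$ in $\bigcap_{T'}\im(i^*_{T'})$. The first step is to find $u\in\Omega^*_T(W)$ restricting to $\alpha'$ on $W^T$: for each codimension-one $T'$ pick $\beta_{T'}\in\Omega^*_T(X^{T'})$ with $i^*_{T'}(\beta_{T'})=\alpha$, and restrict $\beta_{T'}$ along the open immersion $W^{T'}=X^{T'}\cap W\inj X^{T'}$; compatibility of restrictions along open immersions shows the resulting class restricts to $\alpha'$ on $W^T$, so $\alpha'$ lies in every $\im\big(\Omega^*_T(W^{T'})\to\Omega^*_T(W^T)\big)$, and the induction hypothesis applied to $W$ produces the desired $u$, unique by injectivity. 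It remains to verify the gluing condition $\alpha_0\equiv\psi(u)\pmod{c_0}$, where $\psi\colon\Omega^*_T(W)\to S/(c_0)$ is the map of Proposition~\ref{prop:split-cob}; once this holds, $(\alpha_0,u)$ lifts to a class of $\Omega^*_T(X)$ restricting to $\alpha$ on $X^T$, and the induction closes.

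The main obstacle is that last congruence, which is where the hypotheses on all the $T'$ get used. Each $X^{T'}$ has closed bottom stratum $X_0^{T'}=X_0\cap X^{T'}$ over $x_0$, and Proposition~\ref{prop:split-cob} gives $\Omega^*_T(X^{T'})\cong\Omega^*_T(X_0^{T'})\times_{S/(c_0(T'))}\Omega^*_T(W^{T'})$, where $c_0(T')=c^T\big((N_{X_0/X})^{T'}\big)$ is again a non-zero divisor (Corollary~\ref{cor:rigiditysuff}) dividing $c_0$. Writing $\beta_{T'}$ in these coordinates forces its $X_0^{T'}$-component to be $\alpha_0$ and its $W^{T'}$-component to be $\res_{W^{T'}}u$ (by injectivity), so the gluing relation for $X^{T'}$ reads $\alpha_0\equiv\psi_{T'}(\res_{W^{T'}}u)\pmod{c_0(T')}$; since $\psi$ and $\psi_{T'}$ are compatible through the surjection $S/(c_0)\surj S/(c_0(T'))$, this says $\alpha_0-\psi(u)\in(c_0(T'))$ for every codimension-one $T'$. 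Finally, $\bigcap_{T'}(c_0(T'))\subseteq(c_0)$: grouping the weights of $N_{X_0/X}|_{x_0}$ into classes $P$ of mutually proportional characters, and using that with rational coefficients $c^T_1(L_{m\chi})$ and $c^T_1(L_\chi)$ generate the same principal ideal, one gets $(c_0)=\prod_P\big(c^T_1(L_{\chi_P})^{m_P}\big)$ with the $\chi_P$ pairwise linearly independent, so Lemma~\ref{lem:elem1} gives $\prod_P\big(c^T_1(L_{\chi_P})^{m_P}\big)=\bigcap_P\big(c^T_1(L_{\chi_P})^{m_P}\big)\supseteq\bigcap_{T'}(c_0(T'))$, the subtori $(\ker\chi_P)^\circ$ being among the $T'$. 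Hence $\alpha_0-\psi(u)\in(c_0)$, as needed. I expect the delicate part to be the bookkeeping matching up the gluing maps $\psi$, $\psi_{T'}$ and the several fibre-product presentations across the $X^{T'}$; the commutative algebra at the end is routine once Lemma~\ref{lem:elem1} is in hand.
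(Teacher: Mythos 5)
Your proof is correct and follows essentially the same route as the paper: induction on the number of strata (equivalently, fixed points), the fibre-product presentation of $\Omega^*_T(X)$ from Proposition~\ref{prop:split-cob}, reduction to the ideal-theoretic statement $(c_0)=\bigcap_j\bigl(c^T_{d_j}(E_j)\bigr)$ after grouping weights by proportionality classes, and Lemmas~\ref{lem:CP} and~\ref{lem:elem1} to finish. The paper is terser at the bookkeeping stage (it simply cites Brion's argument in the Chow-group case), whereas you spell that bookkeeping out; the mathematical content is the same.
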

\begin{proof} 
This follows from the results of the previous sections 
and by following the strategy of Brion. We give the main steps.
We prove by induction over the number of strata. If $X$ is the unique
stratum, it is a $T$-equivariant vector bundle over $X^T$, in which case 
both the maps $i^*$ and $i^*_{T'}$ are surjective by homotopy invariance.
In the general case, let $X_0 \subset X$ be a closed stratum and let
$U_0$ be its complement. Then $U_0$ is a filtrable smooth variety with
smaller number of strata where $T$ acts with isolated fixed points. We have 
seen in the proof of Theorem~\ref{thm:Loc-Main} that there are exact sequences
\begin{equation}\label{eqn:Loc-Main3*}
0 \to \Omega^*_T(X_0) \xrightarrow{{i_0}_*} \Omega^*_T(X) 
\xrightarrow{j^*_0} \Omega^*_T(U_0) \to 0
\end{equation}
\begin{equation}\label{eqn:Loc-Main3*1}
0 \to \Omega^*_T(X) \xrightarrow{(i^*_0, j^*_0)} \Omega^*_T(X_0) \times
\Omega^*_T(U_0) \to \frac{\Omega^*_T(X_0)}{(c_0)} \to 0
\end{equation}
where $c_0 = c^T_{d_0}\left(N_{{X_0}/{X}}\right)$ is the top Chern class of the
normal bundle of $X_0$ in $X$. Moreover, we have $i^*_0 \circ {i_0}_* =
c_0$. 

We identify $\Omega^*_T(X_0)$ with
$\Omega^*_T(Z_0)$ which in turn is identified with $\Omega^*(Z_0)[[t_1, 
\cdots , t_n]]$ by Lemma~\ref{lem:trivial-T}. In particular, we can
evaluate $c_0$ by identifying its class in $\Omega^*_T(Z_0)$ under the 
pull-back via the zero-section. Under this identification, we can write
$N_{{X_0}/{X}}$ as 
\[
N_{{X_0}/{X}} = \stackrel{s}{\underset{j = 1}\oplus} E_j
\]
where each $E_j$ is obtained by grouping together $E_{\chi}$ and $E_{\chi'}$ when 
the characters $\chi$ and $\chi'$ are multiples of a common primitive
character of $T$. In particular, we can write $E_j =
\oplus (E_q \otimes L_{\chi_j^q})$, where $\chi_j$ is a primitive character.
We then have 
\begin{equation}\label{eqn:Loc-Main3*2}
c_0 = \stackrel{s}{\underset{j = 1}\prod} c^T_{d_j}(E_j) 
= \stackrel{s}{\underset{j = 1}\prod}c_{\chi_j}.
\end{equation}

Using ~\eqref{eqn:Loc-Main3*},  ~\eqref{eqn:Loc-Main3*1} and
~\eqref{eqn:Loc-Main3*2}
and following the argument in \cite[Theorem~3.3]{Brion2}, we only need to show
that  
\begin{equation}\label{eqn:Loc-Main3*4}
(c_0) = \ \stackrel{s}{\underset{j = 1}\bigcap} \left(c^T_{d_j}(E_j)\right)
\ {\rm as \ ideals \ in} \ \Omega^*(Z_0)[[t_1, \cdots, t_n]]. 
\end{equation}

Since all vector bundles on $Z_0$ are trivial, it follows from 
Lemma~\ref{lem:CP} that $c^T_{d_j}(E_j) = u_j\left(c^T_1(L_{\chi_j})\right)^{d_j}$
where $u_j$ is invertible in $\Omega^*_T(Z_0) \cong S$.
Since $\{\chi_1, \cdots, \chi_s\}$ is pairwise 
non-proportional and since we are working with the rational coefficients,
setting $\gamma_j = \left(c^T_1(L_{\chi_j})\right)^{d_j}$, it follows from
Lemma~\ref{lem:elem1} that
\[
\left({\gamma}_1 \cdots {\gamma}_s\right) \ = \
\stackrel{s}{\underset{j=1}{\bigcap}} \left({\gamma}_j\right).
\]
Since each $u_j$ is a unit in $\Omega^*_T(Z_0)$, the equality of 
~\eqref{eqn:Loc-Main3*4} now follows. This completes the proof of the theorem.
\end{proof}

\begin{remk}\label{remk:RESTChow}
The analogue of Theorem~\ref{thm:REST} for the Chow groups was proven by 
Brion without any condition on the nature of the fixed point locus.
At this moment, the author is not confident that the result for cobordism
might hold in such generality because of the complicated nature of the
associated formal group law. However, the above theorem covers most of the
situations that occur in practice. In particular, this is applicable
for toric varieties, all flag varieties and all symmetric varieties of
minimal rank. These are the cases which have been of main interest of
enumerative geometry in recent past.
\end{remk}

The analogue of the following result for the equivariant Chow groups
was proven by Brion in \cite[Theorem~3.4]{Brion2}.
\begin{thm}\label{thm:LOC-APP}
Let $X$ be a smooth filtrable variety where a split torus $T$ acts with
finitely many fixed points $x_1, \cdots x_m$ and with finitely many invariant 
curves. Then the image of
\[
i^* : \Omega^*_T(X) \to \Omega^*_T(X^T)
\]
is the set of all $(f_1, \cdots , f_m) \in {S}^m$ such that
$f_i \equiv f_j$ (mod $\chi$) whenever $x_i$ and $x_j$ lie in an invariant
irreducible curve $C$ and the kernel of the $T$-action on $C$ is the kernel
of the character $\chi$.
\end{thm}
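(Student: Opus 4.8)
The plan is to derive this from Theorem~\ref{thm:REST} by analysing, for each codimension-one subtorus $T'\subset T$, the geometry of $X^{T'}$ and then computing $\im(i^*_{T'})$ componentwise. By Theorem~\ref{thm:REST} one has $\im\big(i^*\colon \Omega^*_T(X)\to\Omega^*_T(X^T)\big)=\bigcap_{T'}\im\big(i^*_{T'}\colon \Omega^*_T(X^{T'})\to\Omega^*_T(X^T)\big)$, where $T'$ runs over the codimension-one subtori of $T$, so it suffices to describe each $\im(i^*_{T'})$ and then intersect.

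First I would record the structure of $X^{T'}$. Since $T'$ is a torus acting on a smooth variety, $X^{T'}$ is smooth; since $T$ is connected its connected components are $T$-invariant, and the residual torus $T/T'\cong\G_m$ acts on $X^{T'}$ with fixed locus $X^T$. Each connected component $W$ of $X^{T'}$ is smooth and irreducible, and if $\dim W\ge 2$ the nontrivial $\G_m$-action on $W$ would produce an infinite family of one-dimensional orbit closures in $X$, contradicting the hypothesis that $X$ has only finitely many invariant curves. Hence every component is either an isolated fixed point or a smooth curve with a nontrivial $\G_m=T/T'$-action; passing to its smooth projective model (which is $\P^1$, as a positive-genus curve admits no nontrivial $\G_m$-action), such a curve is $\cong\P^1$ if it contains at least two fixed points, and is otherwise $\cong\A^1$ or $\cong\G_m$, carrying at most one fixed point and so irrelevant to the asserted congruences. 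Thus the invariant irreducible curves $C$ with $\ker(T\text{-action on }C)=T'$ are exactly the components of $X^{T'}$ isomorphic to $\P^1$, and distinct such components are disjoint because $X^{T'}$ is smooth.

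Next I would compute $\im(i^*_{T'})$. Writing $X^{T'}=\coprod_\alpha W_\alpha$ for the (finitely many) connected components, $\Omega^*_T$ turns this into a product (cf.\ Lemma~\ref{lem:surj*}), and likewise for $X^T=\coprod_\alpha(W_\alpha\cap X^T)$, so $\im(i^*_{T'})=\prod_\alpha\im\big(\Omega^*_T(W_\alpha)\to\Omega^*_T(W_\alpha\cap X^T)\big)$. On an isolated fixed point the restriction is the identity on $S$ and imposes no condition. On a component $W\cong\P^1$ with fixed points $x_i,x_j$, the $T$-action factors through $T/T'\cong\G_m$; let $\chi$ be a character of $T$ with $\ker\chi=T'$ (there are two such, $\chi$ and $\chi^{-1}$, defining the same ideal $(c^T_1(L_{\chi}))$ in $S$, since $c^T_1(L_{\chi^{-1}})$ is $c^T_1(L_{\chi})$ times a unit, and $c^T_1(L_{\chi})$ may be taken as one of the power-series generators of $S$, hence a non-zero divisor by Lemma~\ref{lem:GPSR}). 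Applying the Bialynicki-Birula decomposition $W=\A^1_{x_i}\cup\A^1_{x_j}$ together with Proposition~\ref{prop:split-cob} (whose hypothesis holds since $c^T_1$ of the normal bundle of $x_j$ in $W$ equals $c^T_1(L_{\chi})^{\pm 1}$ up to a unit), exactly as in Example~\ref{exm:proj-space}, I get that $(i^*_{x_i},i^*_{x_j})$ identifies $\Omega^*_T(W)$ with $\{(f,g)\in S\times S : f\equiv g\pmod{\chi}\}$; here one uses that $\Omega^*_T(\A^1_{x_i}\cap\A^1_{x_j})=\Omega^*_T(T/T')\cong S(T')$ by Morita (Proposition~\ref{prop:Morita}) and that the kernel of the restriction $S\to S(T')$ is precisely the ideal $(c^T_1(L_{\chi}))$. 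Hence $\im(i^*_{T'})$ is the set of tuples $(f_1,\dots,f_m)$ with $f_i\equiv f_j\pmod{\chi}$ for every pair $x_i,x_j$ lying on an invariant curve with defining subtorus $T'$.

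Finally I would intersect over all codimension-one $T'$. If $T'$ is not the defining subtorus of any invariant curve, then $X^{T'}=X^T$, so $i^*_{T'}$ is the identity and contributes no constraint; for the remaining $T'$ the constraints are exactly the congruences along the curves $C$ with $T_C=T'$. Taking the intersection therefore yields precisely the set described in the statement. I expect the main work to be the geometric step — showing that the components of $X^{T'}$ are isolated points or copies of $\P^1$, using the finiteness of invariant curves — together with the careful matching of Example~\ref{exm:proj-space} to $\Omega^*_T(W)$ and the identification of $\ker(S\to S(T'))$ with $(c^T_1(L_{\chi}))$; the remaining manipulation of localization sequences is routine.
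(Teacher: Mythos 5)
Your proof is correct and follows the same route as the paper's: reduce via Theorem~\ref{thm:REST} to computing $\im(i^*_{T'})$ for each codimension-one subtorus $T'$, show that the finiteness of invariant curves forces $\dim X^{T'}\le 1$, and then analyse the restriction component-by-component, with the $\P^1$-components contributing exactly the stated congruences (the computation you carry out is essentially Example~\ref{exm:proj-space} transported from $\G_m$ to $T$). You supply more detail than the paper at two points the paper treats tersely --- the dimension bound on $X^{T'}$ (the paper only says ``by our assumption'') and the Morita identification $\Omega^*_T(C\setminus C^T)\cong S(T')$ with $\ker\bigl(S\to S(T')\bigr)=(c^T_1(L_\chi))$ --- but the overall argument coincides with the paper's.
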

\begin{proof}
This is an easy consequence of Theorem~\ref{thm:REST}, as shown by Brion
for the equivariant Chow groups. Notice that $ \Omega^*_T(X^T)$ is
identified with ${S}^m$ by Lemma~\ref{lem:trivial-T}.

Let $\pi$ be a non-trivial character of $T$. Then the space $X^{{\rm Ker}(\pi)}$
is at most one-dimensional by our assumption, and is smooth.

In particular, it is a disjoint union of points and smooth connected curves.
If $C$ is such a curve and contains a unique fixed point $x$, then 
$i^*_x : \Omega^*_T(C) \to \Omega^*_T(\{x\}) = S$ is   
an isomorphism. Otherwise, $C$ must be isomorphic to the projective line
with fixed points $x$ and $y$. It follows from Example~\ref{exm:proj-space}
(see the exact sequence ~\eqref{eqn:SReis0}) that the image of 
\[
i^*_C : \Omega^*_T(C) \to \Omega^*_T(C^T)
\]
is of the desired form. Since
every codimension one subtorus in $T$ is the kernel of a non-trivial
character, the main result now follows from Theorem~\ref{thm:REST}.
\end{proof}

\begin{remk}\label{remk:BRFChow}
We remark here that the localization theorems and the Bott residue formula
for the equivariant Chow groups can be recovered from the above
results for the equivariant cobordism and \cite[Proposition~7.1]{Krishna4}.
\end{remk}

\section{Cobordism ring of flag varieties}\label{section:FLAG}
Let $G$ be a connected reductive group with a split maximal torus $T$ 
of rank $n$ and let
$B$ be a Borel subgroup of $G$ containing $T$. Let $X = G/B$ be the associated
flag variety of the left cosets of $B$ in $G$. We have shown in
Corollary~\ref{cor:FLAG-BASIS} that $\Omega^*_T(X)$ is a free $S(T)$-module
on the classes of the Bott-Samelson varieties. The similar conclusion
holds for the non-equivariant cobordism of $X$ as well. The Schubert 
calculus for the non-equivariant algebraic cobordism has been studied recently 
by Calmes, Petrov and Zainoulline in \cite{CPZ} (see also \cite{HK}). 
Using this, Calmes-Petrov-Zainoulline have obtained a description of
the cobordism ring $\Omega^*(X)$.

Theorem~\ref{thm:flag-V} below can be viewed as the
uncompleted version of the result
of \cite{CPZ} and gives a more geometric expression of $\Omega^*(X)$
in terms of the cobordism ring of the classifying space of the maximal torus 
$T$. This formula is a direct analogue of a similar result  of Demazure
\cite{Demazure} for the Chow ring of $X$.
Theorem~\ref{thm:flag-V} is used in \cite{Krishna3} to describe the algebraic 
cobordism of flag bundles. 

Consider the forgetful map $\Omega^*_G(X) 
\xrightarrow{r^G_X} \Omega^*(X)$. Using Proposition~\ref{prop:Morita},
we can identify $\Omega^*_G(X) = \Omega^*_G(G/B)$ with $\Omega^*_B(k)
\cong S(T)$. Thus $r^G_X$ is same as the map $S(T) \xrightarrow{r^G_X}  
\Omega^*(X)$.

\begin{thm}\label{thm:flag-V}
The ring homomorphism $r^G_X$ descends to an isomorphism of
$\bL$-algebras
\begin{equation}\label{eqn:SII*}
c_X : S(T) \otimes_{S(G)} \bL \to \Omega^*(X).
\end{equation}
In particular, if $G = GL_{n}(k)$, then $\Omega^*(G/B)$ is isomorphic to the 
quotient of the standard polynomial ring $\bL[t_1, \cdots , t_n]$ by the 
ideal $I$ generated by the homogeneous symmetric polynomials of strictly 
positive degree.
\end{thm}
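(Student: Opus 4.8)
The plan is to reduce everything to the torus case via Theorem~\ref{thm:FF*}, using the Morita identification $\Omega^*_G(G/B)\cong\Omega^*_B(k)\cong S(T)$ to rewrite the forgetful map $r^G_X$. First I would apply Theorem~\ref{thm:FF*} with the torus $T$ acting on $X=G/B$: this gives a canonical $\bL$-algebra isomorphism $\Omega^*_T(X)\otimes_{S(T)}\bL\xrightarrow{\cong}\Omega^*(X)$. So it suffices to identify the left-hand side with $S(T)\otimes_{S(G)}\bL$ in a way compatible with $r^G_X$. The key computational input is the localization theorem for $T$ acting on $X=G/B$ (Corollary~\ref{cor:Loc-MainP}, applicable since $G/B$ is smooth projective), together with the explicit description of the image of $i^*:\Omega^*_T(X)\to\Omega^*_T(X^T)$ furnished by Theorem~\ref{thm:LOC-APP} — here $X^T$ is the finite set $\{wB\mid w\in W\}$ of Weyl-group translates, and the invariant curves in $G/B$ are exactly the $T$-stable curves joining pairs $wB$, $w'B$ with $w'=w s_\alpha$, on which the $T$-action has kernel $\ker\alpha$ for the corresponding root $\alpha$. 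Thus $\Omega^*_T(G/B)$ is identified with the subring of $S(T)^{|W|}$ cut out by the congruences $f_w\equiv f_{ws_\alpha}\pmod{c^T_1(L_\alpha)}$.

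Next I would bring in the results of Calmes--Petrov--Zainoulline \cite{CPZ}: they give precisely such a "GKM-style" description of (a completed form of) the $T$-equivariant cobordism of $G/B$, and identify it with the double-coinvariant-type ring built from $S(T)$ over $S(G)=\Omega^*(BG)$. The point of the present theorem is that working with the graded power series rings $S(T)=\bL[[t_1,\dots,t_n]]$ (rather than the genuinely completed rings) one gets the "uncompleted" version, and that after applying $-\otimes_{S(T)}\bL$ the completion issue disappears — concretely, $\bL\otimes_{S(T)} S(T)=\bL$ and, since $S(G)\subset S(T)$ with $S(T)$ free of rank $|W|$ over $S(G)$ (again by \cite{CPZ} / Corollary~\ref{cor:FLAG-BASIS} plus a degree count), base-changing the inclusion $S(G)\hookrightarrow S(T)$ along $S(G)\to\bL$ is an exact, finite operation. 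So I would show $\Omega^*_T(G/B)\cong S(T)\otimes_{S(G)}S(T)$ (as $S(T)$-algebras, via $i^*$ landing in the GKM description), and then apply $-\otimes_{S(T)}\bL$ on the appropriate side to get $\Omega^*(G/B)\cong S(T)\otimes_{S(G)}\bL$. Compatibility with $r^G_X$ is forced by functoriality of the forgetful maps under the tower $G\to B\to T\to 1$ and the naturality in Theorem~\ref{thm:FF*}.

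For the $GL_n$ special case: here $S(T)=\bL[[t_1,\dots,t_n]]$, which since $\bL$ is non-negatively... (actually negatively) graded in the relevant convention is just the polynomial ring $\bL[t_1,\dots,t_n]$ once we pass to $\otimes_{S(G)}\bL$; and $S(G)=\Omega^*(BGL_n)=\bL[[\gamma_1,\dots,\gamma_n]]$ with $\gamma_i$ the elementary symmetric functions in the $t_j$, by \eqref{eqn:BT2}. The augmentation $S(G)\to\bL$ kills the $\gamma_i$, so $S(T)\otimes_{S(G)}\bL=\bL[t_1,\dots,t_n]/(\gamma_1,\dots,\gamma_n)$, and the ideal generated by $\gamma_1,\dots,\gamma_n$ is exactly the ideal $I$ generated by all homogeneous symmetric polynomials of strictly positive degree (standard: the $\gamma_i$ generate the ring of symmetric polynomials over $\bL$, hence generate the positive-degree part as an ideal). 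This recovers Hornbostel--Kiritchenko \cite{HK} rationally.

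The main obstacle I anticipate is the bookkeeping at the interface with \cite{CPZ}: their description is stated for a completed equivariant cobordism and in their own normalization, so the work is to check that the "uncompleted" graded power series rings $S(T)$, $S(G)$ used here give, after $-\otimes_{S(T)}\bL$, literally the ring $\Omega^*(G/B)$ — i.e., that no information is lost or spuriously created by the completion, and that the freeness of $S(T)$ over $S(G)$ (needed for exactness of the base change and for matching the $\bL$-module rank $|W|$ with Corollary~\ref{cor:FLAG-BASIS}) holds in the uncompleted setting. The other delicate point is verifying that the isomorphism produced abstractly is the one induced by $r^G_X$; this I would pin down by evaluating both sides on the Bott--Samelson basis of Corollary~\ref{cor:FLAG-BASIS} and using that $r^G_X$ sends the equivariant Bott--Samelson classes to the non-equivariant ones, which is immediate from the construction of the forgetful map in \S\ref{section:AC}.
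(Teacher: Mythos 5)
Your proposal takes a genuinely different route from the paper's. The paper works directly with the map $r^G_X : \Omega^*_G(X) \cong S(T) \to \Omega^*(X)$ and never needs the full $T$-equivariant structure of $G/B$. Its surjectivity argument is a completion trick: since $c_X(t_i) = c_1(L_{\chi_i, X})$ is nilpotent in $\Omega^*(X)$, the ring $\Omega^*(X)$ is already $I_T$-adically complete; hence the surjectivity of $\widehat{S(T)} \to \Omega^*(X)$ from CPZ (Cor.~13.6) descends to surjectivity of the uncompleted $S(T) \to \Omega^*(X)$, and a short exact-sequence-of-completions argument finishes it. For the kernel, the paper quotes CPZ Cor.~13.7 (kernel is $(I_T)^W S(T)$) and then identifies $(I_T)^W$ with $I_G$ using Theorem~\ref{thm:W-inv} plus exactness of $W$-invariants over $\Q[W]$. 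By contrast, you go through $\Omega^*_T(G/B)$ via the localization theorems and the GKM-type description of Theorem~\ref{thm:LOC-APP}, aim to identify $\Omega^*_T(G/B)$ with $S(T)\otimes_{S(G)} S(T)$, and then base-change along $S(T)\to\bL$ using Theorem~\ref{thm:FF*}. Both approaches lead to the same conclusion, but yours requires establishing the stronger, fully equivariant isomorphism $\Omega^*_T(G/B)\cong S(T)\otimes_{S(G)}S(T)$ in the \emph{uncompleted} setting, which is a nontrivial extraction from CPZ's completed framework; you flag this as the main obstacle but do not resolve it. The paper's route is cleverer precisely because it sidesteps that extraction: the completion issue for the non-equivariant target $\Omega^*(X)$ is harmless (nilpotence makes $\Omega^*(X)$ complete), whereas for the equivariant source $\Omega^*_T(X)$ it is not. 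Your approach buys a more geometric/transparent picture (and is closer to Brion's Chow-group proof), but costs more machinery; the paper's buys efficiency and avoids the GKM bookkeeping. One small slip in your $GL_n$ discussion: the passage from $\bL[[t_1,\dots,t_n]]$ to $\bL[t_1,\dots,t_n]$ upon tensoring with $\bL$ over $S(G)$ needs to be justified (the graded power series ring is not a polynomial ring); the correct observation is that modding out $\bL[[t_1,\dots,t_n]]$ by $(\gamma_1,\dots,\gamma_n)$ already gives a ring that is finite (free of rank $n!$) over $\bL$, by a regular-sequence argument analogous to Lemma~\ref{lem:GPSR}(iv)--(v), and that this quotient agrees with the polynomial-ring quotient.
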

\begin{proof}
We first prove the isomorphism of $c_X$.
The forgetful map $S(T) = \Omega^*_G(X) \to \Omega^*(X)$ can be
geometrically described as follows. Let $L_{\chi}$ be the $T$-equivariant
line bundle on ${\rm Spec}(k)$ corresponding to the character $\chi$ of $T$.
This uniquely defines a line bundle $L_{\chi} \stackrel{B}{\times} G \to G/B = X$
on $X$. We denote this line bundle by $L_{\chi, X}$. The formal group law
for the Chern classes in $S(T)$ and $\Omega^*(X)$ implies that the
assignment $\chi \mapsto L_{\chi, X}$ induces an $\bL$-algebra homomorphism
\begin{equation}\label{eqn:SII*1}
S(T) \to \Omega^*(X), \ c^T_1(L_{\chi}) \mapsto c_1\left(L_{\chi, X}\right)
\end{equation}
and it is easy to see from the definition of $r^G_X$ in ~\eqref{eqn:res}
and the identification $\Omega^*_G(X) \xrightarrow{\cong} S(T)$ that
this map descends to the map $c_X$ above.

We identify $S(T)$ with the graded power series ring 
$\bL[[t_1, \cdots , t_n]]$ and let $I_T$ be the augmentation ideal 
$(t_1, \cdots , t_n)$. Taking the $I_T$-adic completions, we get the map
$\widehat{S(T)} = \widehat{\bL[[{\bf t}]]} \xrightarrow{\widehat{c}_X} 
\widehat{\Omega^*(X)}$.

We claim that $c_X$ is surjective. To prove this, let $N$ denote
the cokernel of the map $S(T) \to \Omega^*(X)$ and consider the
exact sequence of $S(T)$-modules
\begin{equation}\label{eqn:SII*2}
0 \to M \to \Omega^*(X) \to N \to 0,
\end{equation}
where $M = {\rm Image}\left(S(T) \to \Omega^*(X)\right)$.
Using the subspace topology on $M$ given by the descending chain of
submodules $\{I^n_T\Omega^*(X) \cap M\}_{n \ge 0}$, we get an exact 
sequence of completions
\begin{equation}\label{eqn:SII*3} 
0 \to \widehat{M} \to \widehat{\Omega^*(X)} \to \widehat{N} \to 0
\end{equation}
by \cite[Theorem~8.1]{Matsumura}.

Since $c_X(t_i) = c_1(L_{t_i, X}) \in \Omega^1(X)$ and since 
$\Omega^{> {\rm dim}(X)}(X) = 0$,
we see that $c_X(t_i)$ is nilpotent for all $i$. In particular,
$\Omega^*(X)$ is $I_T$-adically complete and hence so is $N$.
In particular, the exact sequence ~\eqref{eqn:SII*3} is same as
\begin{equation}\label{eqn:SII*4} 
0 \to \widehat{M} \to {\Omega^*(X)} \to N \to 0.
\end{equation}
On the other hand, the map $\widehat{S(T)} \to \Omega^*(X)$ is surjective by
\cite[Corollary~13.6]{CPZ}. Since the $I_T$-adic topology on $M$ is finer
than the subspace topology, we have natural maps
$\widehat{S(T)} \to \widehat{M}_{I_S} \to \widehat{M} \to \Omega^*(X) = 
\widehat{\Omega^*(X)}$.
We conclude that the first arrow in ~\eqref{eqn:SII*4} is surjective and
hence $N = 0$, which proves the surjectivity of $c_X$.

Since $S(T)$ is a subring of $\widehat{S(T)}$, it follows from 
\cite[Corollary~13.7]{CPZ} that the kernel of the map
$S(T) \to \Omega^*(X)$ is generated by $(I_T)^W$, where $W$ is the Weyl group
of $G$ with respect to $T$.
On the other hand, the commutative diagram of exact sequences
\[
\xymatrix@C.5pc{
0 \ar[r] &I_G \ar[r] \ar[d] & S(G) \ar[r] \ar[d] & \bL \ar[r] \ar@{=}[d] & 
0 \\
0 \ar[r] & I_T \ar[r] & S(T) \ar[r] & \bL \ar[r] & 0,}
\]
the exactness of the functor of $W$-invariance
on the category of $\Q[W]$-modules and Theorem~\ref{thm:W-inv} together
imply that $I_G = (I_T)^W$.
In particular, we get exact sequence
\[
I_G \otimes_{S(G)} S(T) \to S(T) \to \Omega^*(X) \to 0
\]
which completes the proof of the main part of the theorem.

If $G = GL_n(k)$, then we can choose $T$ to be the set of all diagonal matrices
which is clearly split and $W$ is just the symmetric group $S_n$ which acts
on $S(T) = \bL[[t_1, \cdots , t_n]]$ by permuting the variables. In
particular, $S(G)$ is the graded power series subring of 
$S$ generated by the elementary symmetric polynomials. It is easy to see from
this that $S \otimes_{S(G)} \bL$ has the desired form.
\end{proof}

\begin{remk}
For $G = GL_n(k)$, the cobordism ring of $G/B$ in the above explicit form has 
been recently computed by Hornbostel and Kiritchenko \cite{HK} with integer 
coefficients. They achieve this using the approach of Schubert calculus.  
\end{remk}

As an easy consequence of Theorems~\ref{thm:Basic},  
~\ref{thm:FF*} and ~\ref{thm:flag-V}, one gets the following description of 
the rational cobordism ring of connected reductive groups.

\begin{cor}\label{cor:Cob-red}
For a connected and split linear algebraic group $G$, the natural ring 
homomorphism
\[
\bL \to \Omega^*(G)
\]
is an isomorphism.
\end{cor}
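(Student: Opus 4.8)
The plan is to express $\Omega^*(G)$ through the cobordism ring of the flag variety $G/B$ and the forgetful map, and then feed in Theorem~\ref{thm:flag-V}. One may first reduce to the case $G$ reductive: writing $G = L \ltimes R_u(G)$ with $L$ a Levi subgroup and $R_u(G)$ the unipotent radical, which in characteristic zero is an iterated extension of copies of $\G_a$, the quotient map $G \to G/R_u(G) \cong L$ is a composite of Zariski‑locally trivial $\G_a$‑torsors, hence induces an isomorphism $\Omega^*(G) \xrightarrow{\cong} \Omega^*(L)$ by homotopy invariance (Theorem~\ref{thm:Basic}$(iii)$), and $L$ is again connected split reductive. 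So assume $G$ is reductive with split maximal torus $T$ contained in a Borel $B$.

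Right multiplication makes $G$ a $T$‑scheme (resp. $B$‑scheme) on which the group acts freely with quotient $G/T$ (resp. $G/B$), so by Theorem~\ref{thm:Basic}$(v)$ one has $\Omega^T_*(G) \cong \Omega_*(G/T)$ and $\Omega^B_*(G) \cong \Omega_*(G/B)$. Since $B = T \ltimes R_u(B)$, the approximating maps $G \times^T U_j \to G \times^B U_j$ are Zariski‑locally trivial affine‑space bundles, so the change‑of‑groups map $\Omega^B_*(G) \to \Omega^T_*(G)$ is an isomorphism; combined with the affine bundle $G/T \to G/B$ this gives $\Omega^T_*(G) \cong \Omega_*(G/B)$. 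One checks this is an isomorphism of $S(T)$‑algebras once $\Omega^*(G/B)$ is equipped with its $S(T)$‑structure via the characteristic map $c^T_1(L_\chi) \mapsto c_1(L_{\chi,X})$, $X = G/B$ — for this is precisely the $S(T)$‑action carried by $\Omega^T_*(G)$ under the free‑quotient identification, by functoriality of the Chern classes. Now Theorem~\ref{thm:FF*}, applied to the smooth $T$‑variety $G$, yields the $\bL$‑algebra isomorphism $\Omega^T_*(G) \otimes_{S(T)} \bL \xrightarrow{\cong} \Omega_*(G)$, hence
\[
\Omega^*(G) \;\cong\; \Omega^*(G/B)\,\otimes_{S(T)}\,\bL .
\]

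It remains to substitute the presentation $\Omega^*(G/B) \cong S(T) \otimes_{S(G)} \bL$ of Theorem~\ref{thm:flag-V}; this is an $\bL$‑algebra isomorphism whose $S(T)$‑structure (through $S(T) \to S(T)\otimes_{S(G)}\bL$) is again the characteristic map, so the two descriptions of $\Omega^*(G/B)$ are compatible. Therefore
\[
\Omega^*(G) \;\cong\; \bigl(S(T)\otimes_{S(G)}\bL\bigr)\otimes_{S(T)}\bL \;\cong\; \bL\otimes_{S(G)}\bL \;\cong\; \bL ,
\]
the last step because the augmentation $S(G)=\Omega^*(BG)\to\bL$ is surjective. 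Every arrow in this chain is a homomorphism of $\bL$‑algebras, and the only $\bL$‑algebra endomorphism of $\bL$ is the identity, so the composite identifies $\Omega^*(G)$ with $\bL$ via the canonical map $\bL \to \Omega^*(G)$, which is thus an isomorphism. The point demanding the most care is the compatibility of the two $S(T)$‑algebra structures on $\Omega^*(G/B)$ — the one coming from the free‑quotient description of $\Omega^T_*(G)$ and the one built from the homogeneous line bundles $L_{\chi,X}$ in Theorem~\ref{thm:flag-V} — since only after matching them can the two presentations be spliced; the reduction to reductive $G$ and the change‑of‑groups isomorphism $\Omega^B_*(G)\cong\Omega^T_*(G)$ are routine once homotopy invariance is invoked.
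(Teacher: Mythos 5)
Your proof is correct and follows essentially the same route as the paper: reduce to the reductive case via the Levi decomposition and homotopy invariance, identify $\Omega^*_T(G)$ with $\Omega^*(G/B)$ via the free quotient (the paper passes through $G/T\to G/B$, you also note the equivalent change-of-groups isomorphism), then apply Theorem~\ref{thm:FF*} to kill the augmentation ideal $(t_1,\dots,t_n)$ and finish by substituting the presentation of Theorem~\ref{thm:flag-V}. Your extra care about matching the two $S(T)$-algebra structures on $\Omega^*(G/B)$ is a point the paper treats implicitly, but it does not change the argument.
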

\begin{proof}
Let $G^u$ be the unipotent radical of $G$ and let $L \subset G$ be a Levi
subgroup (which exists because we are in characteristic zero).
Since $G \to L$ is principal $G^u$ bundle and since $G^u$ is a split 
unipotent group (again because of characteristic zero), the homotopy 
invariance implies that $\Omega^*(L) \xrightarrow{\cong} \Omega^*(G)$.
So we can assume that $G$ is a connected and split reductive group.
  
We choose a split maximal torus $T$ of $G$ of rank $n$ and let 
$\{\chi_1, \cdots , \chi_n\}$ be a basis of the character group of $T$. 
By Theorem~\ref{thm:FF*}, there is an isomorphism of rings
$\Omega^*_T(G) \otimes_{S(T)} \bL \xrightarrow{\cong} \Omega^*(G)$. On the other
hand, it follows from Theorem~\ref{thm:Basic} $(v)$ that
$\Omega^*_T(G) \cong \Omega^*(G/T) \cong \Omega^*(G/B)$. 
In particular, we get 
\[
\begin{array}{lllll}
\Omega^*(G) & \cong & \frac{\Omega^*_T(G)}{\left(t_1, \cdots , t_n\right)} &
\cong & \frac{\Omega^*(G/B)}{\left(t_1, \cdots , t_n\right)} \\
& & & \cong & 
\frac{S(T) \otimes_{S(G)} \bL}{\left(t_1, \cdots , t_n\right)} \\
& & & \cong & \bL,
\end{array}
\]
where  the third isomorphism follows from Theorem~\ref{thm:flag-V}.
\end{proof}

\begin{remk}\label{remk:Yagita}
For a complex connected Lie group $G$ such that its maximal compact subgroup
is simply connected, $\Omega^*(G)$ has been computed
with integer coefficients by Yagita \cite{Yagita} using non-equivariant
techniques.  
\end{remk}

It is known ({\sl cf.} \cite[Theorem~1.1]{Graham}, 
\cite[Corollary~2.3]{Brion2}) that the forgetful map from the equivariant
$K_0$ (resp. Chow groups) to the ordinary $K_0$ (resp. Chow groups) is
surjective with the rational coefficients. As  another application of above 
results, we obtain a similar result for the cobordism.

\begin{thm}\label{thm:forget}
Let $G$ be a connected linear algebraic group acting on a $k$-scheme $X$
of dimension $d$.
Then the forgetful map $r^G_X : \Omega^G_*(X) \to \Omega_*(X)$ descends to 
an $\bL$-linear map $\Omega^G_*(X) \otimes_{S(G)} \bL \to \Omega_*(X)$ and
is surjective with rational coefficients.
\end{thm}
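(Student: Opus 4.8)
The plan is to reduce the statement for an arbitrary connected linear algebraic group $G$ to the case of a split torus, where Theorem~\ref{thm:FF*} applies. First I would reduce to the case where $G$ is reductive: if $G^u$ is the unipotent radical of $G$ and $L$ is a Levi subgroup (which exists in characteristic zero), then for any $X \in \sV_G$ the quotient map $X \stackrel{G}{\times} U_j \to X \stackrel{L}{\times} U_j$ is a torsor under the split unipotent group $G^u$, hence an iterated affine bundle, so by homotopy invariance (Theorem~\ref{thm:Basic}(iii)) the restriction map $r^G_{L,X} : \Omega^G_*(X) \to \Omega^L_*(X)$ is an isomorphism; the same argument applied to the trivial action shows $S(G) \cong S(L)$. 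So it suffices to treat $G$ connected reductive, with split maximal torus $T$ and Borel $B \supset T$.

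Next I would observe that it is enough to produce \emph{any} surjection from $\Omega^T_*(X)$ onto $\Omega_*(X)$ that is $S(T)$-linear for the forgetful-compatible $S(T)$-module structure, because one already knows from Theorem~\ref{thm:W-inv} that $\Omega^G_*(X) \xrightarrow{\cong} (\Omega^T_*(X))^W$ and, via the identification $I_G = (I_T)^W$ established in the proof of Theorem~\ref{thm:flag-V}, that $S(G) = S(T)^W$. The key geometric input is the flag variety. Consider the projection $\pi : X \times^B G \to X$ — more precisely, use the $G$-equivariant fibration $G \stackrel{B}{\times} X \to G/B$ together with the Morita isomorphism $\Omega^G_*(G \stackrel{B}{\times} X) \cong \Omega^B_*(X) \cong \Omega^T_*(X)$ (Proposition~\ref{prop:Morita}, plus homotopy invariance for the unipotent radical of $B$). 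Composing the forgetful map $\Omega^G_*(G \stackrel{B}{\times} X) \to \Omega_*(G \stackrel{B}{\times} X)$ with the Gysin/pushforward for the projective bundle-like map $G \stackrel{B}{\times} X \to X$, and chasing the identification of this pushforward through Corollary~\ref{cor:FLAG-BASIS} (that $\Omega^*_T(G/B)$ is free over $S(T)$ on the Bott--Samelson classes, with the class of a point restricting to $1$), one extracts an $\bL$-linear splitting. Concretely: by Theorem~\ref{thm:flag-V}, $r^G_{G/B}: S(T) \to \Omega^*(G/B)$ is \emph{surjective}, so there is a Bott--Samelson class (the class of a point) mapping to $1 \in \Omega^0(G/B)$; pushing this class forward along the projective map $G \stackrel{B}{\times} X \to X$ and using the projection formula (Theorem~\ref{thm:Basic}(vii)) produces, after tensoring down, a right inverse to $r^G_X \otimes_{S(G)} \bL$.

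The main obstacle I expect is bookkeeping the compatibility of all the identifications — Morita isomorphism, the forgetful map, the pushforward along $G \stackrel{B}{\times} X \to X$, and the $S(G)$- versus $S(T)$-module structures — so that one genuinely obtains an $S(G)$-linear (equivalently $\bL$-linear after base change) section rather than merely an additive one; this is exactly the point where the identification $S(G) = S(T)^W$ and the freeness in Corollary~\ref{cor:FLAG-BASIS} must be invoked carefully. A cleaner alternative I would also try: deduce surjectivity of $\Omega^G_*(X) \otimes_{S(G)} \bL \to \Omega_*(X)$ directly from surjectivity of $\Omega^T_*(X) \otimes_{S(T)} \bL \to \Omega_*(X)$ (which is Theorem~\ref{thm:FF*}, even an isomorphism) by showing that $\Omega^G_*(X) \to (\Omega^T_*(X))^W$ remains surjective after $\otimes_{S(G)} \bL$ — this uses flatness/exactness of $(-)^W$ on $\Q[W]$-modules together with $S(T)^W = S(G)$ and the fact that $S(T)$ is free (hence flat) over $S(G)$, which holds because $S(T) = \bL[[\mathbf t]]$ is free over its subring of $W$-invariants in the $GL_n$ case and, in general, follows from \cite{CPZ}; this reduces everything to Theorem~\ref{thm:FF*} with essentially no extra geometry. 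The routine verification that the resulting composite is the forgetful map is then straightforward from the definitions in \eqref{eqn:res} and I would omit the details.
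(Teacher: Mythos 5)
Your ``cleaner alternative'' is essentially the paper's proof, and it is the approach I would endorse. The paper reduces to $G$ reductive with maximal torus $T$, uses Theorem~\ref{thm:W-inv} to write $\Omega^G_*(X) \cong (\Omega^T_*(X))^W$, commutes $(-)^W$ past $\otimes_{S(G)}\bL$ (using exactness of $(-)^W$ on $\Q[W]$-modules), and then observes that
$\Omega^T_*(X) \otimes_{S(G)} \bL \cong \Omega^T_*(X) \otimes_{S(T)} \bigl(S(T) \otimes_{S(G)} \bL\bigr) \surj \Omega^T_*(X) \otimes_{S(T)} \bL \cong \Omega_*(X)$,
the last isomorphism being Theorem~\ref{thm:FF*}; applying $(-)^W$ once more and noting $W$ acts trivially on $\Omega_*(X)$ finishes the surjectivity. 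Two remarks on how your sketch differs. First, you invoke flatness (indeed freeness) of $S(T)$ over $S(G)$; the paper does not need any flatness here --- the middle arrow above is a surjection because $I_G S(T) \subset I_T$, and surjectivity survives the exact functor $(-)^W$. Citing a nontrivial freeness statement from \cite{CPZ} is thus an unnecessary detour. Second, the identification $S(G) = S(T)^W$, equivalently $I_G = (I_T)^W$, which you correctly flag as needed, is extracted in the paper from the proof of Theorem~\ref{thm:flag-V}; this part of your sketch is on the right track.

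There is, however, a genuine gap in your reduction. After passing to a Levi subgroup you say ``it suffices to treat $G$ connected reductive, with split maximal torus $T$,'' but over a general field $k$ of characteristic zero a maximal torus of $G$ need not be split. The paper handles this with a transfer argument: choose a finite extension $l/k$ over which $T_l$ is split, and use the commutative square relating $\Omega^{G_l}_*(X_l) \to \Omega_*(X_l)$ with $\Omega^{G}_*(X) \to \Omega_*(X)$ together with the fact that $\pi_*\circ\pi^*$ is multiplication by $[l:k]$ (\cite[Lemma~2.3.5]{LM}), which makes $\pi_*$ surjective with rational coefficients. Without this step you have only proved the theorem when $T$ is split, which is not the stated generality.

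Your first, more geometric, route via the flag bundle $G \stackrel{B}{\times} X \to X$ and Bott--Samelson classes is not what the paper does, and as you yourself note it involves delicate bookkeeping of the Morita isomorphism, the $S(G)$- versus $S(T)$-module structures, and the pushforward. It would also require extra care when $X$ is singular, since then $\Omega_*(X)$ is not a ring and there is no unit class to split off via the projection formula. The algebraic argument you give as an alternative avoids all of this and is both shorter and more robust; I would drop the geometric route.

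Finally, the statement also asserts that $r^G_X$ \emph{descends} to a map on $\Omega^G_*(X) \otimes_{S(G)} \bL$. You wave at this as a ``routine verification,'' but it does require the observation that $I_G \Omega^G_*(X)$ maps to zero in $\Omega_*(X)$; the paper derives this from the commutative diagram relating the $G$- and $T$-equivariant theories and the vanishing of $I_T \Omega^T_*(X)$ under $r^T_X$, which in turn comes out of Theorem~\ref{thm:FF*}. Make sure to include at least this one-line justification rather than omitting it.
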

\begin{proof}
By \cite[Proposition~8.2]{Krishna4}, we can assume that $G$ is reductive with a 
maximal torus $T$. There is a finite field 
extension $k \inj l$ such that $T_l$ is split. 
We then have a commutative diagram
\begin{equation}\label{eqn:W-inv1*1}
\xymatrix@C1.8pc{
\Omega^{G_l}_*(X_l) \ar[r]^{\pi^G_*} \ar[d] & \Omega^G_*(X) \ar[d] \\ 
\Omega_*(X_l) \ar[r]_{\pi_*} & \Omega_*(X).}
\end{equation}
The map $\pi_* \circ \pi^*$ is multiplication by $[l:k]$ by 
\cite[Lemma~2.3.5]{LM}. In particular, the bottom horizontal map is
surjective with rational coefficients. Hence, we can assume that $T$ is 
a split torus. Let $W$ denote the Weyl group of $G$ with respect to $T$.
The commutative diagram
\[
\xymatrix@C.7pc{
I_G \otimes_{S(G)} \Omega^G_*(X) \ar[r] \ar[d] &
\Omega^G_*(X)  \ar[r] \ar[d] & \Omega_*(X) \ar[d] \\ 
I_T \otimes_{S(T)} \Omega^T_*(X) \ar[r] &  \Omega^T_*(X) \ar[r] & \Omega_*(X)}
\]
and Theorem~\ref{thm:FF*} imply that $r^G_X$ descends to the map
$\Omega^G_*(X) \otimes_{S(G)} \bL \xrightarrow{\ov{r}^G_X} \Omega_*(X)$
which is a ring homomorphism if $X$ is smooth.

To show its surjectivity, we use Theorem~\ref{thm:W-inv} to get
\[
\Omega^G_*(X)  \otimes_{S(G)} \bL \cong 
\left(\Omega^T_*(X)\right)^W \otimes_{S(G)} \bL 
\cong \left(\Omega^T_*(X)  \otimes_{S(G)} \bL\right)^W.
\]
On the other hand, we have
\[
\Omega^T_*(X)  \otimes_{S(G)} \bL \cong
\Omega^T_*(X) \otimes_{S(T)} \left(S(T) \otimes_{S(G)} \bL\right)
\surj \Omega^T_*(X) \otimes_{S(T)} \bL \cong \Omega_*(X),
\]
where the last isomorphism follows from Theorem~\ref{thm:FF*}.
The exactness of the functor of taking the $W$-invariance on the category
of $\Q[W]$-modules implies that
$\left(\Omega^T_*(X)  \otimes_{S(G)} \bL\right)^W \surj \left(\Omega_*(X)\right)^W
= \Omega_*(X)$. This proves the surjectivity
$\Omega^G_*(X) \surj \Omega^G_*(X)  \otimes_{S(G)} \bL \surj \Omega_*(X)$.
\end{proof}

\noindent\emph{Acknowledgments.}
Some of the results in Section~\ref{section:LOCMAIN} of this paper were 
inspired by the discussion with Michel Brion at the Institute Fourier, 
Universit\'e de Grenoble in June, 2010. The author takes this opportunity to 
thank Brion for invitation and financial support during the visit.

\end{document}